\documentclass{amsart}

\usepackage{amssymb}
\usepackage{latexsym}
\usepackage{amsmath}

\textwidth=140mm \textheight=208mm \calclayout

\def\wt{\widetilde}
\def\wh{\widehat}
\def\ov{\overline}
\def \im{{\rm Im}}
 \def\up{\upharpoonright}
\def\cH{\mathcal H}

\def\cB{\mathcal B}   
\def\cK{\mathcal K} \def\cL{\mathcal L}
\def\cM{\mathcal M} \def\cN{\mathcal N}  
 \def\cT{\mathcal T} \def\cI{\mathcal I}

\def \gH{\mathfrak H}   \def \gN{\mathfrak N}

\def \bC{\mathbb C}    \def\bR{\mathbb R}
\def\bH{\mathbb H} 
\def \l{\lambda}
\def \a{\alpha} \def \b{\beta}  \def \L{\Lambda}  \def \s{\sigma}
  \def\g {\gamma}
\def\d {\delta}   \def\F{\Phi}
\def \f{\varphi}  \def \G{\Gamma} \def\D {\Delta}\def\Si{\Sigma}

\def \C{\widetilde {\mathcal C}}
\def \CA{\C(\cH_0,\cH_1)}

\def \cd {\cdot}

\def\AC {AC(\cI; \bH)}   \def\LI {L_\Delta^2(\cI)}
\def\lI {\cL_\Delta^2(\cI)}
\def\tma{\cT_{\max}} \def\tmi{\cT_{\min}} \def\Tma{T_{\max}} \def\Tmi{T_{\min}}
\def\Lma{L_{\max}} \def\Lmi{L_{\min}}

\def \dom {{\rm dom}\,}  \def \ran {{\rm ran}\,}  \def \ker{{\rm ker\,}}
 \def \mul {{\rm mul}\,} \def \sign {{\rm sign}\,}

\def \exa { {Ext}_A}
  \def\tm{\times}

\def \pair {\tau=\{\tau_+,\tau_-\}}
\def \CR {\bC\setminus\bR}
\def\St{\Sigma_\tau(\cdot)} \def\LS {L^2(\Sigma_\tau ; H_0)}
\def\LSB {L^2(\Si_\tau;\bold H_0)}
\def\Tt {\wt T^\tau}

\newcommand {\lo}[1] {\cL_\D^2[#1,\bH ]}
\newcommand {\Ca}[1] {\emph {Case #1}}

\def\bta{\{\cH_0\oplus \cH_1,\Gamma _0,\Gamma _1\}}

\newtheorem{theorem}{Theorem}[section]
\newtheorem{proposition}[theorem]{Proposition}
\newtheorem{corollary}[theorem]{Corollary}
\newtheorem{lemma}[theorem]{Lemma}

\theoremstyle{definition}

\theoremstyle{definition}
\newtheorem {definition} [theorem]{Definition}
\theoremstyle{remark}
\newtheorem{remark}[theorem]{Remark}
\numberwithin{equation}{section}
\begin{document}
\title[On eigenfunction expansions]
{On eigenfunction expansions of first-order symmetric systems and
ordinary differential operators of an odd order}
\author {Vadim Mogilevskii}
\address{Department of Mathematical Analysis, Lugans'k National University,
 Oboronna Str. 2, 91011  Lugans'k,   Ukraine}
\email{vim@mail.dsip.net}

\subjclass[2010]{34B08, 34B20, 34B40,34L10,47A06,47B25}

\keywords{First-order symmetric system, boundary parameter,
$m$-function, spectral function of a boundary problem, Fourier
transform}

\begin{abstract}
We study general (not necessarily Hamiltonian) first-order
symmetric  systems $J y'-B(t)y=\D(t) f(t)$ on an interval
$\cI=[a,b\rangle $ with the regular endpoint $a$. It is assumed
that the deficiency indices $n_\pm(\Tmi)$ of the   minimal
relation $\Tmi$  satisfy $n_+(\Tmi)< n_-(\Tmi)$. We define
 $\l$-depending  boundary
conditions which are analogs  of separated self-adjoint boundary
conditions for Hamiltonian systems. With a boundary value problem
involving such conditions we associate an exit space self-adjoint
extension $\wt T$ of $\Tmi$ and the $m$-function $m(\cd)$, which
is an analog of the Titchmarsh-Weyl coefficient for the
Hamiltonian system. By using $m$-function we obtain the
eigenfunction expansion with the spectral function $\Si(\cd)$ of
the minimally possible dimension and  characterize the case when
spectrum of $\wt T$ is defined by  $\Si(\cd)$. Moreover, we
parametrize all spectral functions  in terms of a Nevanlinna type
boundary parameter.  Application of these results to ordinary
differential operators of an odd order enables us to complete the
results by Everitt and Krishna Kumar on the Titchmarsh-Weyl theory
of such operators.
\end{abstract}
\maketitle
\section{Introduction}
Let  $H$ and $\wh H$ be finite dimensional Hilbert spaces and let
\begin{gather}\label{1.1}
H_0:=H\oplus\wh H, \qquad \bH:=H_0\oplus H=H\oplus\wh H \oplus H.
\end{gather}

In the paper we study  first-order symmetric  systems of
differential equations defined on an interval $\cI=[a,b\rangle,
-\infty<a <b\leq\infty,$ with the regular endpoint $a$ and regular
or singular endpoint $b$. Such a system is of the form
\cite{Atk,GK}
\begin {equation}\label{1.2}
J y'-B(t)y=\D(t) f(t), \quad t\in\cI,
\end{equation}
where $B(t)=B^*(t)$ and $\D(t)\geq 0$ are the $[\bH]$-valued
functions on $\cI$ and
\begin {equation} \label{1.3}
J=\begin{pmatrix} 0 & 0&-I_H \cr 0& i I_{\wh H}&0\cr I_H&
0&0\end{pmatrix}:H\oplus\wh H\oplus H \to H\oplus\wh H\oplus H.
\end{equation}
System \eqref{1.2} is called a Hamiltonian system if $\wh
H=\{0\}$.

Throughout the paper we  assume  that  system \eqref{1.2} is
definite. The latter means  that   for any $\l\in\bC$ each
solution $y(\cd)$ of the equation
\begin {equation}\label{1.4}
J y'-B(t)y=\l \D(t) y
\end{equation}
satisfying $\D(t)y(t)=0$  (a.e. on $\cI$) is trivial, i.e., $
y(t)=0,\;t\in\cI$.

In what follows  we denote by $\gH:=\LI$ the Hilbert space of
$\bH$-valued Borel  functions $f(\cd)$ on $\cI$   (in fact,
equivalence classes) satisfying $||f||_\D^2:= \int\limits_\cI
(\D(t)f(t),f(t))_\bH\,dt<\infty$.

Studying of symmetric systems  is basically motivated by the fact
that system \eqref{1.4} is a more general objet than a formally
self-adjoint differential equation of an arbitrary order with
matrix coefficients. In fact, such equation is reduced  to the
system  \eqref{1.4} of a special form with $J$ given by
\eqref{1.3}; moreover, this system is Hamiltonian precisely in the
case when the differential equation is of an even order (we will
concern this questions below).

As  is known,  the extension theory of symmetric linear relations
gives a natural framework   for investigation of the boundary
value problems for symmetric systems (see
\cite{BHSW10,DLS93,Kac03,LesMal03,Orc} and references therein).
According to \cite{Kac03,LesMal03, Orc}  the system \eqref{1.2}
generates the minimal linear relation $\Tmi$ and the maximal
linear relation $\Tma$ in $\gH$. It turns out that $\Tmi$ is a
closed symmetric relation with not necessarily equal deficiency
indices $n_\pm(\Tmi)$ and $\Tma=\Tmi^*$. Since system \eqref{1.2}
is  definite,  $n_\pm(\Tmi)$ can be defined as a number of
$L_\D^2$-solutions of  \eqref{1.4} for $\l\in\bC_\pm$.

A description of various classes of extensions of $\Tmi$
(self-adjoint, $m$-dissipative, etc.) in terms of boundary
conditions is an important problem in the spectral theory of
symmetric systems. For Hamiltonian system \eqref{1.2} self-adjoint
separated boundary conditions were described in \cite{HinSch93}.
Moreover, the Titchmarsh--Weyl coefficient $M_{TW}(\l)$ of the
boundary value problem for  Hamiltonian system with self-adjoint
separated boundary conditions  was defined by various methods in
\cite{HinSch93,Kra89,Khr06}. Using $M_{TW}(\cd)$ one obtains the
Fourier transform with the spectral function $\Si(\cd)$ of the
minimally possible dimension $N_\Si=\dim H$ (see
\cite{DLS93,HinSch98,Kac03}). At the same time according to
\cite{Mog12} \emph{non-Hamiltonian system \eqref{1.2} does not
admit  self-adjoint separated boundary conditions}. Moreover,  the
inequality $n_+(\Tmi)\neq n_-(\Tmi)$, and hence absence  of
self-adjoint boundary conditions is a typical situation for such
systems. Therefore the following problems are of certain interest:

$\bullet$\; To find (may be $\l$-depending) analogs of
self-adjoint separated boundary conditions for general (not
necessarily Hamiltonian) systems \eqref{1.2} and describe such
type conditions;

$\bullet$\; To describe in terms of boundary conditions all
spectral matrix functions that have the minimally possible
dimension and  investigate the corresponding Fourier transforms.

In the paper \cite{Mog13.1} these problems were considered for
symmetric systems \eqref{1.2} satisfying $n_-(\Tmi)\leq
n_+(\Tmi)$. In the present paper we solve these problems in the
opposite case $n_+(\Tmi) < n_-(\Tmi)$. It turns out that this case
requires a somewhat another approach in comparison with
\cite{Mog13.1}, although the ideas of both the papers are similar.
Moreover, we show that in the case $n_+(\Tmi) < n_-(\Tmi)$ there
is a class of exit space self-adjoint extensions of $\Tmi$ with
special spectral properties.  We  apply also the obtained results
to ordinary differential operators of an odd order with matrix
valued coefficients and arbitrary deficiency indices.

Let $\nu_{b+}$ and $\nu_{b-}$ be indices of inertia of the
skew-Hermitian bilinear form $[y,z]_b$ defined on $\dom\Tma$ by
\begin {equation*}
[y,z]_b=\lim _{t\uparrow b} (J y(t), z(t)), \quad y,z\in\dom\Tma,
\end{equation*}
It turns out that system \eqref{1.2} with $n_+(\Tmi) < n_-(\Tmi)$
has different properties depending on the sign of
$\nu_{b+}-\nu_{b-}$. Within this section we present the results of
the paper assuming a simpler case $\nu_{b+}-\nu_{b-}\leq 0$.

Let a function $y\in\dom\Tma$ be represented as $y(t)=\{y_0(t),
\wh y(t), y_1(t) \}(\in H\oplus\wh H\oplus H)$. A crucial role in
our considerations is played by a symmetric linear relation
$T(\supset \Tmi)$ in $\gH$ given by means of boundary conditions
as follows:
\begin {equation*}
T=\{\{ y,  f\}\in\Tma: \, y_1(a)=0, \;\wh y(a)=0,\; \wt\G_{0b}y
=\G_{1b}y=0 \}
\end{equation*}
Here $\wt\G_{0b}:\dom\Tma\to\wt \cH_b$ and $\G_{1b}: \dom\Tma\to
\cH_b$ are linear mappings with special properties, $\wt \cH_b$
and $\cH_b(\subset \wt \cH_b)$ are auxiliary finite dimensional
Hilbert spaces. In fact , $\wt\G_{0b}y$ and $\G_{1b}y$ are
singular boundary values of a function $y\in\dom\Tma$ at the
endpoint $b$.

Recall that a linear relation $\wt T=\wt T^*$ in a wider Hilbert
space $\wt \gH\supset \gH$ satisfying $T\subset \wt T$ is called
an exit space self-adjoint extension of $T$. Moreover,  a
generalized resolvent $R(\cd)$ and a spectral function $F(\cd)$ of
$T$ are defined by
\begin {equation*}
R(\l)=P_\gH(\wt T -\l)^{-1}\up \gH, \quad \l\in\CR,\;\;\;
\text{and}\;\;\; F(t)=P_{\gH}E(t)\up \gH,\quad t\in\bR,
\end{equation*}
where $E(\cdot)$ is the orthogonal spectral function (resolution
of identity) of $\wt T$. As is known \cite{AkhGla} exit space
self-adjoint extensions exist for  symmetric linear relations with
arbitrary (possibly unequal) deficiency indices.

 To describe
the set of all generalized resolvents of $T$ we use the Nevanlinna
type class $\wt R_- (\wh H\oplus\wt\cH_b,\cH_b)$ of holomorphic
operator pairs $\tau=\{D_0(\l), D_1(\l)\}$. Such a pair is formed
by defined on $\bC_-$ holomorphic operator functions
\begin {equation}\label{1.8}
D_0(\l)=(\wh D_0(\l),\wt D_{0b}(\l)):\wh H\oplus \wt\cH_b\to (\wh
H\oplus \wt\cH_b)\;\;\; \text{and}\;\;\;  D_1(\l)(\in [\cH_b,\wh
H\oplus \wt\cH_b])
\end{equation}
with special properties (see \cite{Mog13.2}). We show that each
generalized resolvent $y=R(\l)f, \; f\in\gH,$ is given as the
$L_\D^2$-solution of the following boundary-value  problem with
$\l$-depending boundary conditions:
\begin{gather}
J y'-B(t)y=\l \D(t)y+\D(t)f(t), \quad t\in\cI,\label{1.9}\\
y_1(a)=0, \quad i \wh D_0(\l)\wh y(a)+\wt
D_{0b}(\l)\wt\G_{0b}y+D_1(\l)\G_{1b}y=0, \quad \l\in\bC_-.
\label{1.10}
\end{gather}
Here $(\wh D_0(\l),\wt D_{0b}(\l))=:D_0(\l)$ and $D_1(\l)$ are
components of a pair $\tau=\{D_0(\l), D_1(\l)\}\in\wt R_- (\wh
H\oplus\wt\cH_b,\cH_b) $ (see \eqref{1.8}), so that the second
equality in \eqref{1.10} is a Nevanlinna type boundary condition
involving boundary values of a function $y$ at  both endpoints $a$
and $b$. Thus, \emph{investigation of boundary value problems for
the system \eqref{1.2} in the case $n_+(\Tmi)<n_-(\Tmi)$ require
 use of boundary conditions of another class in comparison
with the case $n_-(\Tmi)\leq n_+(\Tmi)$}(cf.\cite{Mog13.1}).  One
may consider a pair $\tau$   as a boundary parameter, since
$R(\l)$ runs over the set of all generalized resolvents of $T$
when $\tau$ runs over the set of all holomorphic operator pairs of
the class $\wt R_- (\wh H\oplus\wt\cH_b,\cH_b)$. To indicate this
fact explicitly we write $R(\l)=R_\tau(\l)$ and $F(t)=F_\tau(t)$
for the generalized resolvents and spectral functions of $T$
respectively. Moreover, we denote by $\wt T=\wt T^\tau$ the exit
space self-adjoint extension of $T$ generating $R_\tau(\cd)$ and
$F_\tau(\cd)$.

Next assume that $\f(\cd,\l)$ and $\psi (\cd,\l)$ are
$[H_0,\bH]$-valued operator solutions of the equation  \eqref{1.4}
satisfying  the initial conditions
\begin {equation*}
\f(a,\l)=\begin{pmatrix} I_{H_0}\cr 0\end{pmatrix}(\in [H_0,
H_0\oplus H]),\quad \psi(a,\l)=\begin{pmatrix} -\tfrac i 2 P_{\wh
H}\cr -P_H\end{pmatrix}(\in [H_0, H_0\oplus H]).
\end{equation*}
(here $P_H$ and $P_{\wh H}$ are the orthoprojectors in $H_0$ onto
$H$ and $\wh H$ respectively). We show that, for each Nevanlinna
boundary parameter $\tau = \{D_0(\l),D_1(\l)\}$ of the form
\eqref{1.8}, there exists a unique operator function
$m_\tau(\cd):\CR\to [H_0]$ such that the operator solution
\begin {equation*}
v_\tau(t,\l):=\f(t,\l)m_\tau(\l) +\psi (t,\l)
\end{equation*}
of Eq. \eqref{1.4} has the following property: for every $h_0\in
H_0$ the function $y=v_\tau (t,\l)h_0$ belongs to $\LI$ and
satisfies the boundary condition
\begin{equation*}
\wh D_0(\l)(i\wh y(a)-P_{\wh H}h_0)+ \wt D_{0b}(\l)\wt\G_{0b}y
+D_1(\l)\G_{1b}y=0,\quad \l\in\bC_-.
\end{equation*}
We call $m_\tau(\cd)$  the $m$-function corresponding to the
boundary value problem \eqref{1.9}, \eqref{1.10}; really,
$m_\tau(\cd)$ is an analog of the Titchmarsh-Weyl coefficient
$M_{TW}(\l)$ for Hamiltonian systems. It turns out that
$m_\tau(\cd) $ is a Nevanlinna operator function satisfying the
inequality
\begin {equation*}
(\im \,\l)^{-1}\cd \im\, m_\tau(\l)\geq \int_\cI
v_\tau^*(t,\l)\D(t) v_\tau(t,\l)\, dt, \quad\l\in\CR.
\end{equation*}

Next  we study eigenfunction expansions of the boundary value
problems for symmetric systems. Namely, let $\tau $ be a boundary
parameter and let $F_\tau(\cd)$ be the spectral function of $T$
generated by the boundary value problem \eqref{1.9}, \eqref{1.10}.
A distribution operator-valued function $\Si_\tau(\cd):\bR\to
[H_0]$ is called a spectral function of this problem  if, for each
function $f\in\gH$ with compact support, the Fourier transform
\begin {equation*}
\wh f(s)=\int _\cI \f^*(t,s)\, \D(t)\,f(t)\, dt
\end{equation*}
satisfies
\begin {equation}\label{1.14}
((F_\tau(\b)-F_\tau(\a))f,f)_{\gH}=\int_{[\a,\b)}
(d\Si_{\tau}(s)\wh f(s), \wh f(s))
\end{equation}
for any compact interval $[\a,\b)\subset\bR$. We show that for
each boundary parameter $\tau$ there exists a unique spectral
function $\Si_\tau(\cd)$ and it is recovered  from the
$m$-function $m_\tau(\cd)$ by means of the Stieltjes inversion
formula
\begin {equation}\label{1.15}
\Si_\tau(s)=-\lim\limits_{\delta \to +0}\lim\limits_{\varepsilon
\to +0}\frac 1 \pi \int_{-\delta}^{s-\delta} \im\, m_\tau(\sigma
-i\varepsilon)\, d\sigma.
\end{equation}
Below (within this section) we assume  for simplicity that $T$ is
a  (not necessarily densely defined) operator, i.e., $\mul
T=\{0\}$.

It follows from \eqref{1.14} that  the mapping  $V f=\wh f$ (the
Fourier transform) admits a continuous extension to a contractive
linear mapping $V: \gH \to \LS$ (for the strict definition of the
Hilbert space $\LS$ see e.g. \cite[Ch.13.5]{DunSch}. As in
\cite{Mog13.1} one proves that $V$ is an isometry (that is, the
Parseval equality $||\wh f||_{\LS}=||f||_\gH$ holds for every
$f\in\gH$) if and only if the exit space extension $\wt T^{\tau}$
in $\wt \gH\supset \gH$ is an operator, i.e., $\mul \wt
T^\tau=\{0\}$. In this case one may define the inverse Fourier
transform in the explicit form (see \eqref{6.30}). Moreover, if
$V$ is an isometry, then there exists a unitary extension  $\wt
V\in [\wt\gH,\LS]$ of  $V$ such that the operator $\wt T^\tau$ and
the multiplication operator $\L$ in $\LS$ are unitarily equivalent
by means of $\wt V$. Hence, the operators $\Tt$ and $\L$ have the
same spectral properties; for instance,   multiplicity of the
spectrum of $\Tt$ does not exceed $\dim H_0(=\dim H+\dim \wh H)$.

Now assume that the boundary parameter $\tau=\{D_0(\l),D_1(\l)\}$
is  (cf. \eqref{1.8})
\begin {equation}\label{1.16}
D_0(\l)={\rm diag} \, (I_{\wh H},\, \ov D_0(\l))(\in [\wh
H\oplus\wt\cH_b]),\qquad D_1(\l)=(0,\,\ov D_1(\l))^\top(\in
[\cH_b,\wh H\oplus\wt\cH_b]).
\end{equation}
 We show that in this
case the corresponding $m$-function $m_\tau(\cd)$ is of the
triangular form
\begin {equation*}
m_{\tau}(\l)=\begin{pmatrix} m_{\tau, 1}(\l)  & m_{\tau, 2}(\l)
\cr 0 & -\tfrac i 2 I_{\wh H}\end{pmatrix}:H\oplus\wh H\to
H\oplus\wh H, \quad \l\in\bC_-,
\end{equation*}
so that the spectral function $\St$ has the block matrix
representation
\begin {equation}\label{1.18}
\Si_{\tau}(s)=\begin{pmatrix} \Si_{\tau,1}(s) & \Si_{\tau, 2}(s)
\cr \Si_{\tau, 3}(s) & \tfrac 1 {2\pi} s I_{\wh H}
\end{pmatrix}:H\oplus\wh H\to H\oplus\wh H, \quad s\in\bR.
\end{equation}
Here $\Si_{\tau,1}(\cd)$ is an $[H]$-valued distribution function,
which can be defined by means of the Stieltjes inversion formula
for $m_{\tau, 1}(\l)$. It follows from \eqref{1.18}, that in the
case when a boundary parameter $\tau=\{D_0(\l),D_1(\l)\}$ is of
the form \eqref{1.16} and the Fourier transform is an isometry,
the corresponding exit space self-adjoint extension $\wt T^\tau$
of $T$ has the following spectral properties (for more details see
Theorem \ref{th6.13}):

(S1) $\;\s_{ac}(\wt T^\tau)=\bR$, where $\s_{ac}(\wt T^\tau)$ is
the absolutely continuous spectrum of $\Tt$.

(S2) $\s_s(\Tt)=Ss(\Si_{\tau,1})$, where $\s_s(\Tt)$ is the
singular spectrum of $\Tt$ and $Ss(\Si_{\tau,1})$ is a closed
support of the measure generated by the singular component of
$\Si_{\tau,1} $. Hence the multiplicity of the singular spectrum
of $\Tt$ does not exceed $\dim H$, which yields the same estimate
for multiplicity of each eigenvalue $\l_0$ of $\Tt$.

Next, we show that all spectral functions $\Si_\tau(\cd)$ can  be
parametrized immediately in terms of the  boundary parameter
$\tau$. More precisely, we show that there exists an operator
function
\begin {equation}\label{1.19}
X_-(\l)=\begin{pmatrix}m_0(\l) & \Phi_-(\l) \cr \Psi_-(\l) & \dot
M_-(\l)\end{pmatrix}:H_0\oplus (\wh H\oplus\wt\cH_b)\to
H_0\oplus\cH_b , \quad \l\in\bC_-,
\end{equation}
such that for each  boundary parameter $\tau =\{D_0(\l), D_1(\l)
\}$ the corresponding $m$-function $m_\tau(\cd)$ is given by
\begin {equation}\label{1.20}
m_\tau(\l)=m_0(\l)+\F_-(\l)(D_0(\l)-D_1(\l)\dot M_-(\l))^{-1}
D_1(\l) \Psi_-(\l), \quad\l\in\bC_-.
\end{equation}
Thus, formula \eqref{1.20} together with the Stieltjes inversion
formula \eqref{1.15} defines a (unique) spectral function
$\Si_\tau(\cd)$ of the boundary value problem \eqref{1.9},
\eqref{1.10}. Note  that entries of the matrix \eqref{1.19} are
defined in terms of the boundary values of respective operator
solutions of Eq.\eqref{1.4}. We also describe boundary parameters
$\tau $ for which the Fourier transform $V$ is an isometry and
characterize the case when $V$ is an isometry for every boundary
parameter $\tau$ (see Theorem \ref{th6.14}). Note that a
description of spectral functions for various classes of boundary
problems in the form close to \eqref{1.20}, \eqref{1.15} can be
found in \cite{Ful77,Gor66,HinSha82,Hol85,KacKre,Mog07}.

Clearly,  all the foregoing results can be reformulated (with
obvious simplifications) for Hamiltonian systems \eqref{1.2}.

We suppose that the above results about exit space extensions
$\Tt$ of $T$ may be useful in the case when $\Tt$ is a
self-adjoint relation in $\wt\gH=L_\D^2(\bR)$ induced by the
symmetric system on the whole line $\bR$. More precisely, we
assume that spectral properties of such $\wt T^\tau$ may be
characterized  in terms of the objects ($m$-function, boundary
parameter etc.) associated with the restriction of this system
onto the semi-axis $\cI=[0,\infty)$. These problems will be
considered elsewhere.

If $n_+(\Tmi)$ takes on  the minimally possible value
$n_+(\Tmi)=\dim H$, then $n_+(\Tmi)\leq n_-(\Tmi) $ and the above
results can be rather simplified. Namely, in this case $T$ is a
maximal symmetric relation and hence there exists a unique
generalized resolvent of $T$. Therefore there is a unique
$m$-function $m(\cd)$ which has the triangular form
\begin {equation}\label{1.21}
m(\l)=\begin{pmatrix} M(\l)  & N_-(\l) \cr 0 & -\tfrac i 2 I_{\wh
H}\end{pmatrix}:H\oplus\wh H\to H\oplus\wh H, \quad \l\in\bC_-.
\end{equation}
Moreover, the spectral function $\Sigma(\cd)$ of the corresponding
boundary value problem is of the form \eqref{1.18} and the Fourier
transform $V$ is an isometry. Therefore a (unique) exit space
self-adjoint extension $T_0$ of $T$ has the spectral properties
(S1) and (S2).

Note that systems \eqref{1.2} with $\wh H\neq \{0\}$ and both minimal
deficiency indices $n_+(\Tmi)=\dim H$ and $n_-(\Tmi)=\dim H+\dim\wh H$  were
studied in the paper by Hinton and Schneider \cite{HinSch06}, where the concept
of the ''rectangular'' Titchmarsh-Weyl coefficient $M_{TW}(\l)(\in
[H,H\oplus\wh H]), \; \l\in\bC_+, $ was introduced. One can easily show that in
fact $M_{TW}(\l)=(M^*(\ov\l), \, N_-^*(\ov\l))^\top$, where $M(\l)$ and
$N_-(\l)$ are taken from \eqref{1.21}.

In the final part of the paper we consider the operators generated
by a differential expression $l[y]$ of an odd order $2m+1$ with
$[H]$-valued coefficients ($H$ is a  Hilbert space with $r:=\dim
H<\infty $) defined on an interval $\cI=[a, b\rangle $ (see
\eqref{7.1}). In the particular case of scalar coefficients such
differential operators have been investigated in the papers by
Everitt and Krishna  Kumar \cite{EveKum76.1,EveKum76.2,Kum82},
where the limiting process from the compact intervals
$[a,\b]\subset \cI$ was used for construction of the
Titchmarsh-Weyl matrix $M_{TW}(\l)= (m_{jk}(\l))_{j,k=1}^{m+1}$.
Note that the results of these papers can not be considered to be
completed; in particular, an attempt to define self-adjoint
boundary conditions in \cite{EveKum76.2} gave rise to hardly
verifiable assumptions even in the case of minimally possible
equal  deficiency indices $n_\pm(L_{min})=m+1$ of the minimal
operator $\Lmi$.

Our approach is based on the known fact \cite{KogRof75} that the
equation $l[y]=\l y$ is equivalent to a special  symmetric
non-Hamiltonian system \eqref{1.4}. This enables us to extend the
obtained results concerning symmetric systems to the expression
$l[y]$ with arbitrary (possibly unequal) deficiency indices of
$\Lmi$. In particular, we describe self-adjoint and $\l$-depending
boundary conditions for $l[y]$, which are analogs of self-adjoint
separated boundary conditions for differential expressions of an
even order. This makes it possible to construct eigenfunction
expansion with spectral matrix function $\Si (\cd)$ of the
dimension $(m+1)r\times (m+1)r$ and to describe all $\Si(\cd)$
immediately in terms of boundary conditions (for operators of an
even order and separated boundary conditions such a description
was obtained in \cite{Mog07}).

In conclusion note that the above results  are obtained with the
aid of the method of boundary triplets and the corresponding Weyl
functions in the extension theory of symmetric linear relations
(see \cite{DM91,GorGor,Mal92,MalNei12,Mog06.2} and references
therein).

\section{Preliminaries}
\subsection{Notations}
The following notations will be used throughout the paper: $\gH$,
$\cH$ denote Hilbert spaces; $[\cH_1,\cH_2]$  is the set of all
bounded linear operators defined on the Hilbert space $\cH_1$ with
values in the Hilbert space $\cH_2$; $[\cH]:=[\cH,\cH]$; $A\up
\cL$ is the restriction of an operator $A$ onto the linear
manifold $\cL$; $P_\cL$ is the orthogonal projector in $\gH$ onto
the subspace $\cL\subset\gH$; $\bC_+\,(\bC_-)$ is the upper
(lower) half-plane  of the complex plane.

Recall that a closed linear   relation   from $\cH_0$ to $\cH_1$
is a closed linear subspace in $\cH_0\oplus\cH_1$. The set of all
closed linear relations from $\cH_0$ to $\cH_1$ (in $\cH$) will be
denoted by $\C (\cH_0,\cH_1)$ ($\C(\cH)$). A closed linear
operator $T$ from $\cH_0$ to $\cH_1$  is identified  with its
graph $\text {gr}\, T\in\CA$.

For a linear relation $T\in\C (\cH_0,\cH_1)$  we denote by $\dom
T,\,\ran T, \,\ker T$ and $\mul T$  the domain, range, kernel and
the multivalued part of $T$ respectively. Recall also that the
inverse and adjoint linear relations of $T$ are the relations
$T^{-1}\in\C (\cH_1,\cH_0)$ and $T^*\in\C (\cH_1,\cH_0)$ defined
by
\begin{gather}
T^{-1}=\{\{h_1,h_0\}\in\cH_1\oplus\cH_0:\{h_0,h_1\}\in T\}\nonumber\\
T^* = \{\{k_1,k_0\}\in \cH_1\oplus\cH_0:\, (k_0,h_0)-(k_1,h_1)=0,
\; \{h_0,h_1\}\in T\}\label{2.0}.
\end{gather}

  In the
case $T\in\CA$ we write $0\in \rho (T)$ if $\ker T=\{0\}$\ and\
$\ran T=\cH_1$, or equivalently if $T^{-1}\in [\cH_1,\cH_0]$;
$0\in \wh\rho (T)$\ \ if\ \ $\ker T=\{0\}$\ and\   $\ran T$ is a
closed subspace in $\cH_1$. For a linear relation $T\in \C(\cH)$
we denote by $\rho (T):=\{\l \in \bC:\ 0\in \rho (T-\l)\}$ and
$\wh\rho (T)=\{\l \in \bC:\ 0\in \wh\rho (T-\l)\}$ the resolvent
set and the set of regular type points  of $T$ respectively.

Recall also the following definition.
\begin{definition}\label{def2.0}
A holomorphic operator function $\Phi (\cd):\bC\setminus\bR\to
[\cH]$ is called a Nevanlinna function  if $\im\, \l\cd \im \Phi
(\l)\geq 0 $ and $\Phi ^*(\l)= \Phi (\ov \l), \;
\l\in\bC\setminus\bR$.
\end{definition}
\subsection{Symmetric and self-adjoint linear relations}\label{sub2.1a}
A linear relation $A\in\C (\gH)$ is called symmetric
(self-adjoint) if $A\subset A^*$ (resp. $A=A^*$). For each
symmetric relations $A\in \C (\gH)$ the following decompositions
hold
\begin {equation}\label{2.0.1}
\gH=\gH_0\oplus \mul A, \qquad A=\rm{gr}\, A'\oplus\wh {\mul} A,
\end{equation}
where $\wh {\mul} A =\{0\}\oplus \mul A$ and $A'$ is a closed
symmetric (not necessarily densely defined)  operator in $\gH_0$
(the operator part of $A$); moreover, $A=A^* $ if and only if
$A'=(A')^*$.

A spectral function $E(\cd):\bR\to [\gH]$ of the relation
$A=A^*\in \C (\gH)$ is defined as $E(t)=E'(t)P_{\gH_0}$, where
$E'(\cd):\bR\to [\gH_0]$ is the orthogonal spectral function of
$A'$

For an operator $A=A^*$ we denote by $\s (A), \; \s_{ac}(A)$ and
$\s_s(A)$ the spectrum, the absolutely continuous spectrum and the
singular spectrum of $A$ respectively  \cite[Section 10.1]{Kat}.

Next assume that $\cH$ is a finite dimensional Hilbert space. A
non-decreasing operator function $\Si(\cd): \bR\to [\cH]$ is
called a distribution function if it is left continuous and
satisfies  $\Si(0)=0$. For each distribution function $\Si(\cd)$
there is a unique pair of distribution functions $\Si_{ac}(\cd)$
and $\Si_s (\cd)$ such that $\Si_{ac}(\cd)$ is absolutely
continuous, $\Si_s (\cd)$ is singular and
$\Si(t)=\Si_{ac}(t)+\Si_s(t)$ (the Lebesgue decomposition of
$\Si$). For  a distribution function $\Si(\cd)$ we denote by
$S(\Si)$ the set of all $t\in\bR$ such that $\Si(t-\delta)\neq
\Si(t+\delta)$ for any $\delta>0$. Moreover, we let
$S_{ac}(\Si)=S(\Si_{ac})$ and $S_{s}(\Sigma)=S(\Si_s)$.

With each distribution function $\Si(\cd)$ one associates the
Hilbert space $L^2(\Si;\cH)$ of all functions $f(\cd):\bR\to \cH$
such that $\int\limits_\bR (d\Si(t)f(t),f(t))<\infty $ (for more
details see e.g. \cite[Section 13.5]{DunSch}). The following
theorem is well known.
\begin{theorem}\label{th2.1}
Let $\Si(\cd)$ be a $[\cH]$-valued distribution function. Then the
relations
\begin {equation*}
\dom \Lambda_\Si=\{f\in L^2(\Si;\cH):t f(t)\in L^2(\Si;\cH)\}
,\;\;\; (\Lambda_\Si f)(t)=tf(t), \;\; f\in\dom\Lambda_\Si
\end{equation*}
define a self-adjoint operator $\L=\L_\Si$ in $L^2(\Si;\cH)$ (the
multiplication operator) and
\begin {equation*}
\s(\L_\Si)=S(\Si), \quad \s_{ac}(\L_\Si)=S_{ac}(\Si), \quad
\s_{s}(\L_\Si)=S_{s}(\Si).
\end{equation*}
\end{theorem}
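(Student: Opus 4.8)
The plan is to establish self-adjointness first and then to compute the three spectra, writing $(f,g)_\Si=\int_\bR(d\Si(t)f(t),g(t))$ for the inner product of $L^2(\Si;\cH)$. That $\dom\L_\Si$ is dense is clear, since $\chi_{[-n,n]}(\cd)f$ lies in $\dom\L_\Si$ and tends to $f$; symmetry is equally clear, because $(\L_\Si f,g)_\Si=\int_\bR t\,(d\Si(t)f(t),g(t))=(f,\L_\Si g)_\Si$ for real $t$. To pass from symmetry to self-adjointness I would show $\ran(\L_\Si\mp i)=L^2(\Si;\cH)$: for $g\in L^2(\Si;\cH)$ the function $f(t)=(t\mp i)^{-1}g(t)$ is a scalar multiple of $g$ with $|(t\mp i)^{-1}|\le1$ and $|t(t\mp i)^{-1}|\le1$, so both $f$ and $t\,f$ lie in $L^2(\Si;\cH)$, i.e. $f\in\dom\L_\Si$, and $(\L_\Si\mp i)f=g$. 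This already yields $\pm i\in\rho(\L_\Si)$.

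Next I would isolate a lemma valid for any $[\cH]$-valued distribution function $\Xi$, namely $\s(\L_\Xi)=S(\Xi)$, and apply it to $\Xi=\Si,\Si_{ac},\Si_s$. If $s\notin S(\Xi)$ there is $\d>0$ with $\Xi$ constant on $(s-\d,s+\d)$, so $(t-s)^{-1}$ is $\Xi$-essentially bounded by $\d^{-1}$ and multiplication by it inverts $\L_\Xi-s$ boundedly, giving $s\in\rho(\L_\Xi)$. If $s\in S(\Xi)$, then $\Xi(s+\d)-\Xi(s-\d)\ne0$ for every $\d>0$, so some $e\in\cH$ has $((\Xi(s+\d)-\Xi(s-\d))e,e)>0$; the normalized vectors $\chi_{(s-\d,s+\d)}(\cd)e$ satisfy $\|(\L_\Xi-s)f_\d\|_\Xi\le\d$, so they are approximate eigenvectors and $s\in\s(\L_\Xi)$.

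The hard part will be to split $\L_\Si$ along the Lebesgue decomposition $\Si=\Si_{ac}+\Si_s$. Here I would pass to the scalar trace measure $\om={\rm tr}\,\Si$; since $0\le\Si(\D)\le\om(\D)I$, one has $(\Si(\cd)e,e)\ll\om$ for all $e\in\cH$. Taking from the scalar decomposition $\om=\om_{ac}+\om_s$ a Borel set $B$ of Lebesgue measure zero that carries $\om_s$ and satisfies $\om_{ac}(B)=0$, I would check that $\D\mapsto\Si(\D\setminus B)$ is absolutely continuous while $\D\mapsto\Si(\D\cap B)$ is singular, so uniqueness of the operator Lebesgue decomposition forces $\Si_{ac}(\cd)=\Si(\cd\setminus B)$ and $\Si_s(\cd)=\Si(\cd\cap B)$. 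Consequently $L^2(\Si;\cH)=L^2(\Si_{ac};\cH)\oplus L^2(\Si_s;\cH)$, realized as the ranges of multiplication by $\chi_{\bR\setminus B}$ and $\chi_B$, and this orthogonal decomposition reduces $\L_\Si=\L_{\Si_{ac}}\oplus\L_{\Si_s}$.

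Finally I would read off the spectral types of the two parts. The spectral measure of $\L_\Si$ is multiplication by $\chi_\D$, so the scalar measure attached to $g$ is $\mu_g(\D)=\int_\D(d\Si(t)g(t),g(t))$; for $g\in L^2(\Si_{ac};\cH)$ this is absolutely continuous (write $d\Si_{ac}=\Psi(t)\,dt$) and for $g\in L^2(\Si_s;\cH)$ it is carried by the Lebesgue-null set $B$, hence singular. Thus $\L_{\Si_{ac}}$ has purely absolutely continuous spectrum $S(\Si_{ac})$ and $\L_{\Si_s}$ purely singular spectrum $S(\Si_s)$, and adding the reducing parts gives $\s_{ac}(\L_\Si)=S(\Si_{ac})=S_{ac}(\Si)$ and $\s_s(\L_\Si)=S(\Si_s)=S_s(\Si)$, while $\s(\L_\Si)=S(\Si)$ comes from the lemma. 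The only genuinely delicate point is the measure-theoretic reduction of the third paragraph; the remaining steps are routine once the common carrier $B$ is in hand.
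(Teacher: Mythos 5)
Your proof is correct, but there is no argument in the paper to measure it against: Theorem \ref{th2.1} is stated there with the single comment that it ``is well known'', the only pointers being \cite[Section 13.5]{DunSch} for the construction of $L^2(\Si;\cH)$ and \cite[Section 10.1]{Kat} for the notions $\s_{ac}$ and $\s_s$. So your write-up supplies a proof the paper omits, and the steps hold up under scrutiny. The self-adjointness argument via $f(t)=(t\mp i)^{-1}g(t)$ is standard and sound; your lemma $\s(\L_\Xi)=S(\Xi)$ is proved correctly in both directions (for $s\in S(\Xi)$ note only that, by left continuity, the half-open interval $[s-\d,s+\d)$ has operator measure $\Xi(s+\d)-\Xi(s-\d)$, so $\chi_{[s-\d,s+\d)}e_\d$ has positive norm and Weyl's criterion applies, the vector $e_\d$ being allowed to depend on $\d$). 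The genuinely delicate third paragraph is also right, and it is worth observing that this is precisely where the standing hypothesis $\dim\cH<\infty$ enters: the trace measure $\om={\rm tr}\,\Si$ is then a $\s$-finite scalar Borel measure dominating the operator measure, since for a positive operator $A$ on a finite-dimensional space one has $A\leq ({\rm tr}\,A)I_{\cH}$; with a Lebesgue-null carrier $B$ of $\om_s$ this domination forces $\Si(\cd\setminus B)$ to be absolutely continuous and $\Si(\cd\cap B)$ singular, and entrywise uniqueness of the scalar Lebesgue decomposition identifies them with $\Si_{ac}$ and $\Si_s$, exactly as you claim. Multiplication by $\chi_{\bR\setminus B}$ and $\chi_B$ are complementary orthogonal projections commuting with $\L_\Si$, hence reduce it to $\L_{\Si_{ac}}\oplus\L_{\Si_s}$; since the spectral measure of $\L_\Si$ is $E(\d)=\chi_\d\,\cd$, the scalar measures $\mu_g$ are absolutely continuous on the first summand (via the a.e.\ density $\Psi$ of $\Si_{ac}$, which exists entrywise because $\cH$ is finite-dimensional) and carried by the Lebesgue-null set $B$ on the second, so the two parts are purely absolutely continuous and purely singular respectively. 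Combining this with your lemma and with Kato's definitions $\s_{ac}(A)=\s(A\up H_{ac})$, $\s_s(A)=\s(A\up H_s)$ gives $\s_{ac}(\L_\Si)=S_{ac}(\Si)$ and $\s_s(\L_\Si)=S_s(\Si)$, and $\s(\L_\Si)=S(\Si)$ comes from the lemma directly; no gap remains.
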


\subsection{The class $\wt R_-(\cH_0,\cH_1)$}\label{sub2.2}
Let $\cH_0$ be a Hilbert space, let $\cH_1$ be a subspace in
$\cH_0$ and let $\tau =\{\tau_+,\tau_-\}$ be a collection of
holomorphic functions $\tau_\pm(\cd):\bC_\pm\to\CA$. In the paper
we systematically deal with collections $\pair$ of the special
class $\wt R_-(\cH_0,\cH_1)$ introduced in \cite{Mog13.2}. In the
case $\cH_0=\cH_1=:\cH$ this class turns into the well known class
$\wt R(\cH)$ of Nevanlinna functions $\tau(\cd):\CR\to \C (\cH)$
(see for instance \cite{DM00}). If $\dim\cH_0<\infty$, then
according to \cite{Mog13.2} the collection $\pair\in \wt
R_-(\cH_0,\cH_1)$ admits the representation
\begin {equation}\label{2.1}
\tau_+(\l)=\{(C_0(\l),C_1(\l));\cH_1\}, \;\;\l\in\bC_+; \;\;\;\;
\tau_-(\l)=\{(D_0(\l),D_1(\l));\cH_0\}, \;\;\l\in\bC_-
\end{equation}
by means of holomorphic operator pairs
\begin {equation*}
(C_0(\l),C_1(\l)):\cH_0\oplus\cH_1\to\cH_1, \;\;\l\in\bC_+;
\;\;\;\;(D_0(\l),D_1(\l)):\cH_0\oplus\cH_1\to\cH_0, \;\;\l\in\bC_-
\end{equation*}
(more precisely, by equivalence classes of such pairs). The
equalities \eqref{2.1} mean that
\begin {equation}\label{2.2}
\begin{array}{c}
\tau_+(\l)=\{\{h_0,h_1\}\in\cH_0\oplus\cH_1:
C_0(\l)h_0+C_1(\l)h_1=0\},
\;\;\;\l\in\bC_+\\
\tau_-(\l)=\{\{h_0,h_1\}\in\cH_0\oplus\cH_1:
D_0(\l)h_0+D_1(\l)h_1=0\}, \;\;\;\l\in\bC_-.
\end{array}
\end{equation}
In \cite{Mog13.2} the class $\wt R_-(\cH_0,\cH_1)$ is
characterized both in terms of $\CA$-valued functions
$\tau_\pm(\cd)$ and in terms of operator functions $C_j(\cd)$ and
$D_j(\cd), \; j\in\{0,1\},$ from \eqref{2.1}.

\subsection{Boundary triplets and Weyl functions}
Here we recall some  definitions and results from our paper
\cite{Mog13.2}.

 Let $A$ be a closed  symmetric linear relation in the Hilbert space $\gH$,
let $\gN_\l(A)=\ker (A^*-\l)\; (\l\in\wh\rho (A))$ be a defect
subspace of $A$, let $\wh\gN_\l(A)=\{\{f,\l f\}:\, f\in
\gN_\l(A)\}$ and let $n_\pm (A):=\dim \gN_\l(A)\leq\infty, \;
\l\in\bC_\pm,$ be deficiency indices of $A$. Denote by $\exa$ the
set of all proper extensions of $A$, i.e., the set of all
relations $\wt A\in \C (\gH)$ such that $A\subset\wt A\subset
A^*$.

Next assume that $\cH_0$ is a Hilbert space,  $\cH_1$ is a
subspace in $\cH_0$ and $\cH_2:=\cH_0\ominus\cH_1$, so that
$\cH_0=\cH_1\oplus\cH_2$. Denote by $P_j$ the orthoprojector  in
$\cH_0$ onto $\cH_j,\; j\in\{1,2\} $.
\begin{definition}\label{def2.5}
 A collection $\Pi_-=\bta$, where
$\G_j: A^*\to \cH_j, \; j\in\{0,1\},$ are linear mappings, is
called a boundary triplet for $A^*$, if the mapping $\G :\wh f\to
\{\G_0 \wh f, \G_1 \wh f\}, \wh f\in A^*,$ from $A^*$ into
$\cH_0\oplus\cH_1$ is surjective and the following Green's
identity
\begin {equation}\label{2.10}
(f',g)-(f,g')=(\G_1  \wh f,\G_0 \wh g)_{\cH_0}- (\G_0 \wh f,\G_1
\wh g)_{\cH_0}-i (P_2\G_0 \wh f,P_2\G_0 \wh g)_{\cH_2}
\end{equation}
 holds for all $\wh
f=\{f,f'\}, \; \wh g=\{g,g'\}\in A^*$.
\end{definition}
\begin{proposition}\label{pr2.7}
Let  $\Pi_-=\bta$ be a boundary triplet for $A^*$. Then:
\begin{enumerate}
\item
$\;\dim \cH_1=n_+(A)\leq n_-(A)=\dim \cH_0$.
\item
$\ker \G_0\cap\ker\G_1=A$ and $\G_j$ is a bounded operator from
$A^*$ into $\cH_j, \;  j\in\{0,1\}$.
\item
The equality
\begin {equation}\label{2.11}
A_0:=\ker \G_0=\{\wh f\in A^*:\G_0 \wh f=0\}
 \end{equation}
defines the maximal symmetric extension $A_0\in\exa$ such that
$\bC_-\subset \rho (A_0) $.
\end{enumerate}
\end{proposition}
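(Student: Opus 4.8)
The plan is to extract everything from the defining Green's identity \eqref{2.10} together with the surjectivity of $\G=\{\G_0,\G_1\}$, treating the three assertions in turn. I would first establish the index equality in (1). The right-hand side of \eqref{2.10} has a natural quadratic form interpretation: setting $\wh f=\wh g$ and taking imaginary parts relates $\im(f',f)$ to a sesquilinear expression on $\cH_0\oplus\cH_1$ whose signature is governed by the extra term $-i(P_2\G_0\wh f,P_2\G_0\wh f)_{\cH_2}$. The point is that this term breaks the symmetry between $\bC_+$ and $\bC_-$, which is exactly what forces the unequal deficiency indices $n_+(A)\le n_-(A)$. Concretely, I expect $\dim\cH_1=n_+(A)$ and $\dim\cH_0=n_-(A)$ to follow by computing $\dim(A^*/A)$ two ways: once as $n_+(A)+n_-(A)$ (the standard von Neumann formula for the quotient of a closed symmetric relation by itself inside its adjoint) and once via the surjection $\G$, whose kernel is $A$ by assertion (2), giving $\dim(A^*/A)=\dim(\cH_0\oplus\cH_1)=\dim\cH_0+\dim\cH_1$. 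Matching a restricted-defect computation to each half-plane separately should pin down which summand equals $n_+$ and which equals $n_-$.

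For assertion (2), the inclusion $A\subset\ker\G_0\cap\ker\G_1$ is immediate from Green's identity: if $\wh f=\{f,f'\}$ annihilates both $\G_0$ and $\G_1$, then the right-hand side of \eqref{2.10} vanishes for all $\wh g\in A^*$, so $(f',g)-(f,g')=0$ for all $\{g,g'\}\in A^*$, which by the definition \eqref{2.0} of the adjoint places $\{f,f'\}\in (A^*)^*=A$ (using that $A$ is closed). For the reverse inclusion, $\{f,f'\}\in A$ makes the left side of \eqref{2.10} vanish identically in $\wh g$, and since $\G$ is surjective the pair $\{\G_0\wh g,\G_1\wh g\}$ ranges over all of $\cH_0\oplus\cH_1$; reading \eqref{2.10} as a pairing that must vanish against an arbitrary element of $\cH_0\oplus\cH_1$ then forces $\G_0\wh f=0$ and $\G_1\wh f=0$. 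Boundedness of $\G_j$ is a closed-graph argument: $A^*$ carries the graph-norm Hilbert space structure, the $\G_j$ are defined on all of $A^*$, and one checks the graph is closed using that \eqref{2.10} is continuous in the graph norm of each argument.

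For assertion (3), I would show $A_0:=\ker\G_0$ is symmetric, then that $\bC_-\subset\rho(A_0)$, from which maximal symmetry follows automatically. Symmetry is again read off \eqref{2.10}: restricting both arguments to $A_0$ kills the terms containing $\G_0$, leaving $(f',g)-(f,g')=(\G_1\wh f,0)-(0,\G_1\wh g)-0=0$, so $A_0\subset A_0^*$. To get $\bC_-\subset\rho(A_0)$, I would fix $\l\in\bC_-$ and show $\ker(A_0-\l)=\{0\}$ and $\ran(A_0-\l)=\gH$. The key computation is that for $\{f,\l f\}\in A_0$ one has, from the diagonal form of \eqref{2.10} with $\wh f=\wh g=\{f,\l f\}$, that $2i\,\im\l\cdot\|f\|^2 = -i\|P_2\G_0\wh f\|^2_{\cH_2}$; since $\G_0\wh f=0$ on $A_0$ the right side is zero, yet $\im\l<0$ forces $\|f\|^2\le 0$, hence $f=0$. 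Surjectivity of $A_0-\l$ for such $\l$ then follows from $\ker(A_0-\l)=\{0\}$ together with the general fact that a symmetric extension realizing a nonreal number as a regular-type point has that number in its resolvent set precisely when the defect is absorbed — which is where the index count from (1) re-enters. The main obstacle I anticipate is precisely the bookkeeping in assertion (1): correctly tracking how the signature term $-i(P_2\G_0\wh f,P_2\G_0\wh f)_{\cH_2}$ distributes the two defect numbers between $\cH_0$ and $\cH_1$, and ensuring the half-plane inclusion $\bC_-\subset\rho(A_0)$ (rather than $\bC_+$) comes out on the correct side. Everything else reduces to careful but routine manipulation of \eqref{2.10} against the surjectivity of $\G$.
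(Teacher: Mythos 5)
Your part (2) is essentially correct, but parts (1) and (3) contain a genuine gap, and it is the same gap in both places: you never carry out the diagonal Green's-identity computation on the relevant defect subspaces, and without it nothing in your argument can produce the asymmetry between $\bC_+$ and $\bC_-$. (Note that the paper itself gives no proof of this proposition --- it is recalled from \cite{Mog13.2} --- so your proposal has to stand alone.) Setting $\wh f=\wh g=\{f,\l f\}$ in \eqref{2.10} gives $2\,\im\l\,\|f\|^2=2\,\im(\G_1\wh f,\G_0\wh f)-\|P_2\G_0\wh f\|^2$; your version of this identity drops the term $2i\,\im(\G_1\wh f,\G_0\wh f)$. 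On $A_0=\ker\G_0$ the omission happens to be harmless, but then the identity reads $\im\l\,\|f\|^2=0$, so $\ker(A_0-\l)=\{0\}$ for \emph{every} nonreal $\l$, in both half-planes; hence injectivity --- which is all you actually establish, and your sign discussion around it is muddled --- cannot be the source of $\bC_-\subset\rho(A_0)$, and the appeal to ``the defect being absorbed'' is not an argument. What is missing is the range half: $A_0$ is symmetric by your own observation, so $\ran(A_0-\l)$ is closed for $\l\in\CR$; next compute $A_0^*$ --- Green's identity restricted to $\wh f\in A_0$ collapses to $(f',g)-(f,g')=(\G_1\wh f,\G_0\wh g)$, and surjectivity of $\G$ gives $\G_1(\ker\G_0)=\cH_1$, whence $A_0^*=\{\wh g\in A^*:P_1\G_0\wh g=0\}$; finally kill $\ker(A_0^*-\mu)$ for $\mu\in\bC_+$: if $P_1\G_0\wh g=0$ then $(\G_1\wh g,\G_0\wh g)=(\G_1\wh g,P_2\G_0\wh g)=0$ since $\cH_1\perp\cH_2$, and the diagonal identity becomes $2\,\im\mu\,\|g\|^2=-\|P_2\G_0\wh g\|^2\le 0$, forcing $g=0$. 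This is the one place the signature term enters with a decisive sign, and it is exactly what selects the lower half-plane; $\ran(A_0-\l)=\gH$ for $\l\in\bC_-$ follows, and maximal symmetry of $A_0$ is then automatic from the existence of a nonreal resolvent point.

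The same computation is what your part (1) lacks. The two-way count of $\dim(A^*/A)$ yields only the sum $n_+(A)+n_-(A)=\dim\cH_0+\dim\cH_1$ and, as you yourself anticipate, cannot distribute it. The distribution comes from the diagonal identity on $\wh\gN_\l(A)$: for $\l\in\bC_+$, the assumption $P_1\G_0\wh f=0$ makes the right-hand side $\le 0$ while the left-hand side is $\ge 0$, so $P_1\G_0\up\wh\gN_\l(A)$ is injective and $n_+(A)\le\dim\cH_1$; for $\l\in\bC_-$, the assumption $\G_0\wh f=0$ gives $\im\l\,\|f\|^2=0$, so $\G_0\up\wh\gN_\l(A)$ is injective and $n_-(A)\le\dim\cH_0$. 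Together with the sum this closes the argument when $\dim\cH_0<\infty$ (the situation the paper ultimately uses), but for the proposition as stated you would additionally need surjectivity of these restricted maps --- obtainable from the decomposition $A^*=A_0\dotplus\wh\gN_\l(A)$, $\l\in\bC_-$, which is in substance Proposition \ref{pr2.8}(1) --- since cardinal arithmetic does not let a sum identity pin down infinite summands. Your part (2) is fine as sketched: one inclusion follows from \eqref{2.0}, the other from surjectivity of $\G$ (the cross term gives $\G_1\wh f=iP_2\G_0\wh f$, and both sides vanish because $\cH_1\perp\cH_2$), and the closed-graph argument for boundedness of $\G_j$ goes through by exactly the limiting computation you indicate.
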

\begin{proposition}\label{pr2.8}
Let  $\Pi_-=\bta$ be a boundary triplet for $A^*$ (so that in view
of Proposition \ref{pr2.7}, (1) $n_+(A)\leq n_-(A)$). Denote also
by $\pi_1$ the orthoprojector in $\gH\oplus\gH$ onto $\gH\oplus
\{0\}$. Then:

{\rm (1)} The operators $P_1\G_0\up \wh \gN_\l (A), \;\l\in\bC_+,$
and $\G_0\up \wh \gN_z (A),\; z\in\bC_-,$ isomorphically map
$\wh\gN_\l (A)$ onto $\cH_1$ and $\wh\gN_z(A)$ onto $\cH_0$
respectively. Therefore the equalities
\begin {equation}\label{2.12}
\begin{array}{c}
\g_{+} (\l)=\pi_1(P_1\G_0\up\wh \gN_\l (A))^{-1}, \;\;\l\in\Bbb C_+,\\
 \g_{-} (z)=\pi_1(\G_0\up\wh\gN_z (A))^{-1}, \;\; z\in\Bbb C_-,
\end{array}
\end{equation}
\begin{gather}
M_{+}(\l)h_1=(\G_1-iP_2\G_0)\{\g_+(\l)h_1, \l\g_+(\l)h_1\}, \quad
h_1\in\cH_1, \quad
\l\in\bC_+\label{2.13}\\
M_{-}(z)h_0=\G_1\{\g_-(z)h_0, z\g_-(z)h_0\}, \quad h_0\in\cH_0,
\quad z\in\bC_- \label{2.14}
\end{gather}
correctly define the operator functions $\g_{+}(\cdot):\Bbb
C_+\to[\cH_1,\gH], \; \; \g_{-}(\cdot):\Bbb C_-\to[\cH_0,\gH]$ and
$M_{+}(\cdot):\bC_+\to [\cH_1,\cH_0], \;\; M_{-}(\cdot):\bC_-\to
[\cH_0,\cH_1]$, which are holomorphic on their domains. Moreover,
the equality $M_+^*(\ov\l)=M_-(\l), \;\l\in\bC_-,$ is valid.

{\rm (2)} Assume that
\begin{gather*}
M_+(\l)=(M(\l),N_+(\l))^\top:\cH_1\to\cH_1\oplus\cH_2 , \quad
\l\in\bC_+\\
M_-(z)=(M(z),N_-(z)):\cH_1\oplus\cH_2\to \cH_1, \quad z\in\bC_-
\end{gather*}
are the block representations of $M_+(\l)$ and $M_-(z)$
respectively and let
\begin{gather}
\cM(\l)=\begin{pmatrix}M(\l) & 0 \cr N_+(\l) & \tfrac i 2
I_{\cH_2}
\end{pmatrix}:\cH_1\oplus\cH_2\to \cH_1\oplus\cH_2, \quad
\l\in\bC_+\label{2.17}\\
\cM(\l)=\begin{pmatrix}M(\l) & N_-(\l) \cr  0  & -\tfrac i 2
I_{\cH_2}
\end{pmatrix}:\cH_1\oplus\cH_2\to \cH_1\oplus\cH_2, \quad
\l\in\bC_-.\label{2.18}
\end{gather}
Then $\cM(\cd)$ is a Nevanlinna operator function satisfying the
identity
\begin {equation}\label{2.18a}
\cM(\mu)-\cM^*(\l)=(\mu-\ov\l)\g_-^*(\l)\g_-(\mu), \quad \mu,\l\in
\bC_- .
\end{equation}
\end{proposition}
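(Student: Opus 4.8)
The plan is to handle the two isomorphism claims of part (1) first, then derive the symmetry $M_+^*(\ov\l)=M_-(\l)$, and finally obtain the Nevanlinna property of $\cM(\cd)$ from the identity \eqref{2.18a}, which is the real core. For $z\in\bC_-$ Proposition \ref{pr2.7}(3) gives $z\in\rho(A_0)$ with $A_0=\ker\G_0$, so $A^*=A_0\dotplus\wh\gN_z(A)$. Since $\G=(\G_0,\G_1)$ is onto $\cH_0\oplus\cH_1$, its first component $\G_0$ is onto $\cH_0$; as $\G_0$ vanishes on $A_0$, the restriction $\G_0\up\wh\gN_z(A)$ is already onto $\cH_0$, and it is injective because $\ker(\G_0\up\wh\gN_z(A))=\wh\gN_z(A)\cap A_0=\{0\}$ (trivial as $z\in\rho(A_0)$). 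Hence $\G_0\up\wh\gN_z(A)$ is an isomorphism onto $\cH_0$ and $\g_-$ is well defined by \eqref{2.12}. For $\l\in\bC_+$ I would get injectivity of $P_1\G_0\up\wh\gN_\l(A)$ directly from Green's identity \eqref{2.10}: with $\wh f=\wh g=\{f,\l f\}$ it reads
\[ 2\,\im\l\,\|f\|^2=2\,\im(\G_1\wh f,\G_0\wh f)_{\cH_0}-\|P_2\G_0\wh f\|_{\cH_2}^2, \]
and if $P_1\G_0\wh f=0$ then $\G_0\wh f=P_2\G_0\wh f\in\cH_2$, which is orthogonal to $\G_1\wh f\in\cH_1$, so the cross term dies and $2\,\im\l\,\|f\|^2=-\|P_2\G_0\wh f\|^2\le0$; as $\im\l>0$ this forces $f=0$. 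Since $\dim\wh\gN_\l(A)=n_+(A)=\dim\cH_1$ by Proposition \ref{pr2.7}(1), injectivity yields bijectivity onto $\cH_1$, so $\g_+$ is well defined. Holomorphy of $\g_\pm$, and hence of $M_\pm$ from \eqref{2.13}, \eqref{2.14}, follows from the holomorphic dependence of $\wh\gN_\l(A)$ on $\l$, boundedness of $\G_0,\G_1$ (Proposition \ref{pr2.7}(2)), and the holomorphy of the inverse of a holomorphic family of boundedly invertible operators.

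To prove $M_+^*(\ov\l)=M_-(\l)$ for $\l\in\bC_-$, I would set $\mu=\ov\l\in\bC_+$ and apply \eqref{2.10} to $\wh f=\{\g_-(\l)h_0,\l\g_-(\l)h_0\}$ and $\wh g=\{\g_+(\mu)h_1,\mu\g_+(\mu)h_1\}$, with $h_0\in\cH_0$, $h_1\in\cH_1$. The left-hand side equals $(\l-\ov\mu)(\g_-(\l)h_0,\g_+(\mu)h_1)=0$ because $\ov\mu=\l$. On the right I substitute $\G_0\wh f=h_0$, $\G_1\wh f=M_-(\l)h_0$, $P_1\G_0\wh g=h_1$, and, from \eqref{2.13}, $\G_1\wh g=M_+(\mu)h_1+iP_2\G_0\wh g$; since $M_-(\l)h_0\in\cH_1$ is orthogonal to $P_2\G_0\wh g\in\cH_2$, the two terms carrying $P_2\G_0\wh g$ cancel and there remains $(M_-(\l)h_0,h_1)_{\cH_1}=(h_0,M_+(\mu)h_1)_{\cH_0}$. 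Varying $h_0,h_1$ gives $M_-(\l)=M_+^*(\mu)=M_+^*(\ov\l)$, which finishes part (1).

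For part (2) everything hinges on \eqref{2.18a}. For $\mu,\l\in\bC_-$ and $h_0,g_0\in\cH_0$ I apply \eqref{2.10} to $\wh f=\{\g_-(\mu)h_0,\mu\g_-(\mu)h_0\}$ and $\wh g=\{\g_-(\l)g_0,\l\g_-(\l)g_0\}$; the left-hand side is $(\mu-\ov\l)(\g_-^*(\l)\g_-(\mu)h_0,g_0)_{\cH_0}$. On the right $\G_0\wh f=h_0$, $\G_0\wh g=g_0$, $\G_1\wh f=M_-(\mu)h_0$, $\G_1\wh g=M_-(\l)g_0$, all in the relevant spaces. Using the block form \eqref{2.18} of $\cM(\cd)$ on $\bC_-$ one checks the bookkeeping relations $(M_-(\mu)h_0,g_0)_{\cH_0}=(\cM(\mu)h_0,g_0)_{\cH_0}+\tfrac i2(P_2h_0,P_2g_0)_{\cH_2}$ and $(h_0,M_-(\l)g_0)_{\cH_0}=(\cM^*(\l)h_0,g_0)_{\cH_0}-\tfrac i2(P_2h_0,P_2g_0)_{\cH_2}$; substituting these into the right-hand side, the three $\cH_2$-contributions ($+\tfrac i2$, $+\tfrac i2$, and the $-i$ from Green's defect term) cancel exactly, leaving $((\cM(\mu)-\cM^*(\l))h_0,g_0)_{\cH_0}$. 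Equating the two sides and varying $h_0,g_0$ yields \eqref{2.18a}. The Nevanlinna property is then immediate: holomorphy of $\cM$ comes from that of $M_\pm$; the relation $\cM^*(\l)=\cM(\ov\l)$ is read off \eqref{2.17}, \eqref{2.18} using $M_+^*(\ov\l)=M_-(\l)$ block-wise together with $(\tfrac i2 I_{\cH_2})^*=-\tfrac i2 I_{\cH_2}$; and putting $\mu=\l$ in \eqref{2.18a} gives $\im\cM(\l)=\im\l\cdot\g_-^*(\l)\g_-(\l)$ for $\l\in\bC_-$, whence $\im\l\cdot\im\cM(\l)\ge0$ there, while for $\l\in\bC_+$ the same follows from $\cM(\l)=\cM^*(\ov\l)$.

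The main obstacle I anticipate is twofold. Conceptually, surjectivity of $P_1\G_0\up\wh\gN_\l(A)$ onto $\cH_1$ for $\l\in\bC_+$ rests on the dimension count $\dim\wh\gN_\l(A)=\dim\cH_1$, so in the finite-dimensional case it is automatic once injectivity is in hand, but in the general case one must separately argue that the range is closed and fills $\cH_1$ (as in \cite{Mog13.2}), since no von Neumann-type direct-sum complement to $A_0$ is available on $\bC_+$ when $n_+(A)<n_-(A)$. Computationally, the delicate point is the precise sign and projection bookkeeping of the defect term $-i(P_2\G_0\wh f,P_2\G_0\wh g)_{\cH_2}$ in \eqref{2.10}: the whole of \eqref{2.18a}, and therefore the Nevanlinna inequality, depends on its exact cancellation against the $\pm\tfrac i2 I_{\cH_2}$ blocks of $\cM(\cd)$.
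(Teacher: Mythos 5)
Your proof is correct, but note that the paper itself contains no proof of Proposition \ref{pr2.8}: it is recalled from \cite{Mog13.2} (``Here we recall some definitions and results from our paper \cite{Mog13.2}''), so there is no in-paper argument to compare against. Judged on its own merits, your reconstruction is the natural one and checks out in every step I verified: the decomposition $A^*=A_0\dotplus\wh\gN_z(A)$ for $z\in\bC_-$ is legitimate because Proposition \ref{pr2.7},(3) gives $\bC_-\subset\rho(A_0)$ (solvability of $(A_0-z)$ produces the splitting, and $\ker(A_0-z)=\{0\}$ gives directness), which yields bijectivity of $\G_0\up\wh\gN_z(A)$; your Green's-identity computation $2\,\im\l\,\|f\|^2=2\,\im(\G_1\wh f,\G_0\wh f)-\|P_2\G_0\wh f\|^2$ does force injectivity of $P_1\G_0\up\wh\gN_\l(A)$ on $\bC_+$; the adjoint relation $M_+^*(\ov\l)=M_-(\l)$ follows exactly as you say, with the term $iP_2\G_0\wh g$ from \eqref{2.13} cancelling the defect term in \eqref{2.10}; and the bookkeeping identities $(M_-(\mu)h_0,g_0)=(\cM(\mu)h_0,g_0)+\tfrac i2(P_2h_0,P_2g_0)$ and $(h_0,M_-(\l)g_0)=(\cM^*(\l)h_0,g_0)-\tfrac i2(P_2h_0,P_2g_0)$ are precisely what the block forms \eqref{2.17}, \eqref{2.18} give, so the three $\cH_2$-contributions $\tfrac i2+\tfrac i2-i$ cancel and \eqref{2.18a} follows, whence $\im\cM(\l)=\im\l\cdot\g_-^*(\l)\g_-(\l)$ on $\bC_-$ and the Nevanlinna property via $\cM^*(\l)=\cM(\ov\l)$. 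The one real gap is the one you flagged yourself: surjectivity of $P_1\G_0\up\wh\gN_\l(A)$ onto $\cH_1$ (and, relatedly, holomorphy of $\g_+$) rests on the dimension count $\dim\wh\gN_\l(A)=n_+(A)=\dim\cH_1$, which settles the matter only when $\dim\cH_0<\infty$; in the infinite-dimensional setting one must argue closedness and fullness of the range separately, as is done in \cite{Mog13.2}. Since every application in this paper has finite-dimensional $\cH_0$ (the deficiency indices of $\Tmi$ are finite), this caveat is harmless here.
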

\begin{definition}\label{def2.9}$\,$\cite{Mog13.2}
The operator functions $\g_\pm(\cd)$ and $M_\pm(\cd)$  are called
the $\g$-fields and the Weyl functions, respectively,
corresponding to the boundary triplet $\Pi_-$.
\end{definition}
It follows from \eqref{2.12} that for each $h_1\in\cH_1$ and
$h_0\in\cH_0$ the following equalities hold
\begin {equation}\label{2.19}
P_1\G_0\{\g_+(\l)h_1,\l \g_+(\l)h_1\}=h_1,  \qquad
\G_0\{\g_-(z)h_0, z \g_-(z)h_0\}=h_0.
\end{equation}
\begin{proposition}\label{pr2.10a}
Let $\Pi_-=\bta$ be a boundary triplet for $A^*$ and let
$\g_\pm(\cd)$ and $M_\pm(\cd)$ be the corresponding $\g$-fields
and Weyl functions respectively. Moreover, let the spaces $\cH_0$
and $\cH_1$ be decomposed as
\begin {equation*}
\cH_1=\wh\cH\oplus\dot\cH_1, \qquad \cH_0=\wh\cH\oplus\dot\cH_0
\end{equation*}
(so  that $\dot\cH_0=\dot\cH_1\oplus\cH_2$) and let
\begin {equation*}
\G_0=( \wh\G_0, \dot\G_0)^\top : A^* \to \wh\cH\oplus\dot\cH_0,
\qquad \G_1=( \wh\G_1, \dot\G_1 )^\top: A^* \to
\wh\cH\oplus\dot\cH_1
\end{equation*}
be the block representations of the operators $\G_0$ and $\G_1$.
Then:

{\rm (1)} The equalities
\begin {gather*}
\wt A=\{\wh f\in A^*: \wh\G_0 \wh f=\dot\G_0\wh f= \dot\G_1\wh
f=0\},\quad \wt A^*=\{\wh f\in A^*: \wh\G_0 \wh f=0\}
\end{gather*}
define a closed symmetric extension $\wt A\in\exa$ and its adjoint
$\wt A^*$.

{\rm(2)} The collection $\dot\Pi_-=\{\dot\cH_0\oplus\dot\cH_1,
\dot \G_0\up \wt A^*, \dot \G_1\up \wt A^*\}$  is a boundary
triplet for $\wt A^*$.

{\rm (3)}  The $\g$-fields $\dot\g_\pm(\cd)$ and the Weyl
functions $\dot M_\pm (\cd)$ corresponding to $\dot\Pi_-$ are
given by
\begin{gather*}
\dot\g_+(\l)=\g_+(\l)\up \dot\cH_1, \qquad \dot
M_+(\l)=P_{\dot\cH_0}M_+(\l)\up
\dot\cH_1, \quad \l\in\bC_+\\
\dot\g_-(\l)=\g_-(\l)\up \dot\cH_0, \qquad \dot
M_-(\l)=P_{\dot\cH_1}M_-(\l)\up \dot\cH_0, \quad \l\in\bC_-.
\end{gather*}
\end{proposition}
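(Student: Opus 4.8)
The plan is to derive all three assertions from a single device: the block form of Green's identity \eqref{2.10} induced by the splittings $\cH_0=\wh\cH\oplus\dot\cH_0$ and $\cH_1=\wh\cH\oplus\dot\cH_1$. Writing $\G_0\wh f=\{\wh\G_0\wh f,\dot\G_0\wh f\}$, $\G_1\wh f=\{\wh\G_1\wh f,\dot\G_1\wh f\}$ and using $\wh\cH\perp\dot\cH_0$ together with $\cH_2=\cH_0\ominus\cH_1=\dot\cH_0\ominus\dot\cH_1$ (so that $P_2\G_0\wh f=P_2\dot\G_0\wh f$), the right-hand side of \eqref{2.10} separates into a $\wh\cH$-part $(\wh\G_1\wh f,\wh\G_0\wh g)-(\wh\G_0\wh f,\wh\G_1\wh g)$ and a remainder that is literally the right-hand side of \eqref{2.10} written for $\dot\Pi_-$. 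The whole argument hinges on the fact that the $\wh\cH$-part vanishes once $\wh\G_0\wh f=\wh\G_0\wh g=0$.

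For (1), I would first note that $\wt A=\ker\G_0\cap\ker\dot\G_1$ is closed (as $\G_0$ and $\dot\G_1=P_{\dot\cH_1}\G_1$ are bounded on the closed relation $A^*$ by Proposition \ref{pr2.7}) and that $A=\ker\G_0\cap\ker\G_1\subset\wt A\subset A^*$ (since $\ker\G_1\subset\ker\dot\G_1$), so $\wt A\in\exa$; symmetry of $\wt A$ is read off the block identity with $\G_0\wh f,\dot\G_1\wh f,\G_0\wh g,\dot\G_1\wh g$ all zero. The core of (1) is the equality $(\wt A)^*=\ker\wh\G_0$. The inclusion $\ker\wh\G_0\subset(\wt A)^*$ is immediate: for $\wh f\in\wt A$ and $\wh g\in\ker\wh\G_0$ every surviving term of the block identity carries a zero factor, so $(f',g)=(f,g')$. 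For the converse, take $\{g,g'\}\in(\wt A)^*$; then $\{g,g'\}\in A^*$ because $A\subset\wt A$, and the block identity with $\wh f\in\wt A$ collapses to $0=(\wh\G_1\wh f,\wh\G_0\wh g)_{\wh\cH}$. Surjectivity of $\G$ now forces $\wh\G_1(\wt A)=\wh\cH$: for each $\wh h\in\wh\cH$ there is $\wh f\in A^*$ with $\G_0\wh f=0$, $\dot\G_1\wh f=0$, $\wh\G_1\wh f=\wh h$, i.e.\ $\wh f\in\wt A$. Hence $\wh\G_0\wh g=0$ and $(\wt A)^*=\ker\wh\G_0=\wt A^*$.

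This yields (2) at once. The maps $\dot\G_0,\dot\G_1$ are defined on $\wt A^*=\ker\wh\G_0$; the block identity restricted to $\ker\wh\G_0$ is exactly Green's identity \eqref{2.10} for $\dot\Pi_-$; and $\dot\G=(\dot\G_0,\dot\G_1)$ maps $\wt A^*$ onto $\dot\cH_0\oplus\dot\cH_1$ because $\G$ is surjective and one may choose preimages with vanishing $\wh\cH$-components. Thus $\dot\Pi_-$ meets Definition \ref{def2.5} for $\wt A^*$, and Proposition \ref{pr2.7}(2) applied to $\dot\Pi_-$ recovers $\ker\dot\G_0\cap\ker\dot\G_1=\wt A$, confirming consistency. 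For (3) I would evaluate the defining relations \eqref{2.12}--\eqref{2.14} for $\dot\Pi_-$ against \eqref{2.19} for the original triplet. Taking $\dot h_0\in\dot\cH_0\subset\cH_0$ in \eqref{2.19} gives $\G_0\{\g_-(z)\dot h_0,z\g_-(z)\dot h_0\}=\dot h_0$, whose $\wh\cH$-component is zero; hence $\g_-(z)\dot h_0\in\gN_z(\wt A)$ with $\dot\G_0$-value $\dot h_0$, so by the uniqueness in Proposition \ref{pr2.8}(1) (for $\dot\Pi_-$) one gets $\dot\g_-(z)=\g_-(z)\up\dot\cH_0$ and $\dot M_-(z)=P_{\dot\cH_1}M_-(z)\up\dot\cH_0$. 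The $\bC_+$ case runs the same way: for $\dot h_1\in\dot\cH_1$ the value $P_1\G_0\{\g_+(\l)\dot h_1,\l\g_+(\l)\dot h_1\}=\dot h_1$ lies in $\dot\cH_1$, again forcing $\wh\G_0=0$, so $\g_+(\l)\dot h_1\in\gN_\l(\wt A)$; using $P_2\G_0=P_2\dot\G_0$ one obtains $\dot\g_+(\l)=\g_+(\l)\up\dot\cH_1$ and $\dot M_+(\l)=P_{\dot\cH_0}M_+(\l)\up\dot\cH_1$.

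The step I expect to be the main obstacle is the reverse inclusion $(\wt A)^*\subset\ker\wh\G_0$ in (1): this is the only place where surjectivity of the boundary map $\G$ must be genuinely used (to produce test elements $\wh f\in\wt A$ realizing every value of $\wh\G_1$), and it is precisely this identification of the adjoint that legitimizes calling $\dot\Pi_-$ a boundary triplet for $\wt A^*$. Once it is in place, assertions (2) and (3) are routine bookkeeping with the orthogonal splitting $\cH_0=\wh\cH\oplus\dot\cH_1\oplus\cH_2$.
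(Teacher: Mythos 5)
Your proof is correct, and it is in substance the proof the paper intends: the paper does not write out an argument for Proposition \ref{pr2.10a} at all, but only remarks that it is similar to Proposition 4.1 in \cite{DM00}, which treats the case $\cH_0=\cH_1$ of ordinary boundary triplets. Your argument is exactly that standard scheme transplanted to the triplets $\Pi_-$: split the generalized Green identity \eqref{2.10} into its $\wh\cH$-block and $\dot\cH$-block, use surjectivity of $\G=(\G_0,\G_1)^\top$ to manufacture elements of $\wt A$ realizing every value of $\wh\G_1$ (which gives the crux $(\wt A)^*=\ker\wh\G_0$, correctly singled out by you as the only nontrivial step), and then identify $\dot\g_\pm$ and $\dot M_\pm$ from \eqref{2.19} together with the injectivity statements of Proposition \ref{pr2.8}, (1). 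The one feature absent from the $\cH_0=\cH_1$ model is the extra term $-i(P_2\G_0\wh f,P_2\G_0\wh g)$ in \eqref{2.10}, and you dispose of it correctly: since $\cH_2=\dot\cH_0\ominus\dot\cH_1\subset\dot\cH_0$ one has $P_2\G_0=P_2\dot\G_0$, so this term attaches entirely to the $\dot\cH$-block, reappears unchanged as the corresponding term in Green's identity for $\dot\Pi_-$, and yields $P_{\dot\cH_0}(\G_1-iP_2\G_0)=\dot\G_1-iP_2\dot\G_0$ on $\wt A^*$, which is precisely what makes $\dot M_+(\l)=P_{\dot\cH_0}M_+(\l)\up\dot\cH_1$ come out right on $\bC_+$. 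No gaps.
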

The proof of Proposition \ref{pr2.10a} is similar to that of
Proposition 4.1 in \cite{DM00}.
\begin{remark}\label{rem2.10b}
If $\cH_0=\cH_1:=\cH$, then   the boundary triplet in the sense of
Definition \ref{def2.5} turns into the boundary triplet
$\Pi=\{\cH,\G_0,\G_1\}$ for $A^*$ in the sense of
\cite{GorGor,Mal92}. In this case $n_+(A)=n_-(A)(=\dim \cH)$ and
the $\g$-fields $\g_\pm(\cd)$ and Weyl functions $M_\pm(\cd)$ turn
into the $\g$-field $\g(\cd)$ and  Weyl function $M(\cd)$
respectively introduced in \cite{DM91,Mal92}. Observe also that
along with $\Pi_-$ we define in \cite{Mog06.2,Mog13.2}  a boundary
triplet $\Pi_+=\bta$ for $A^*$. Such a triplet is applicable to
symmetric relations $A$ with $n_-(A)\leq n_+(A)$.
\end{remark}
\subsection{Generalized resolvents and exit space extensions}\label{sub2.4}
The following definitions are well known.
\begin{definition}\label{def2.11}
Let $\wt\gH$ be a Hilbert space and let $\gH$ be a subspace in
$\wt \gH$. A relation $\wt A=\wt A^* \in \C (\wt\gH)$ is  called
$\gH$-minimal if $\text{span} \{\gH,(\wt A-\l)^{-1}\gH:
\l\in\CR\}=\wt\gH$.
\end{definition}
\begin{definition}\label{def2.12}
The relations $T_j\in \C (\gH_j), \; j\in\{1,2\},$ are said to be
unitarily equivalent (by means of a unitary operator $U\in
[\gH_1,\gH_2]$) if $T_2=\wt U T_1$ with $\wt U=U\oplus U \in
[\gH_1^2, \gH_2^2]$.
\end{definition}
\begin{definition}\label{def2.13}
Let $A$ be a symmetric relation in a Hilbert space $\gH$.  The
operator functions $R(\cd):\CR\to [\gH]$ and $F(\cd):\bR\to [\gH]$
are called the generalized resolvent and the spectral function of
$A$ respectively if there exist a Hilbert space $\wt
\gH\supset\gH$ and a self-adjoint  relation $\wt A\in \C (\wt\gH)$
such that $A\subset \wt A$ and the following equalities hold:
\begin {gather}
R(\l) =P_\gH (\wt A- \l)^{-1}\up \gH, \quad \l \in \CR\label{2.20}\\
F(t)=P_{\gH}E(t)\up\gH, \quad  t\in\bR\label{2.21}
\end{gather}
(in formula \eqref{2.21} $E(\cd)$ is the  spectral function of
$\wt A$).

The relation $\wt A$ in \eqref{2.20} is called an exit space
extension of $A$.
\end{definition}
According to \cite{LanTex77} each generalized resolvent  of $A$ is
generated by some $\gH$-minimal exit space extension  $\wt A$ of
A. Moreover,  if the $\gH$-minimal exit space  extensions $\wt
A_1\in\C (\wt\gH_1)$ and $\wt A_2\in\C (\wt\gH_2)$ of $A$ induce
the same generalized resolvent $R(\l)$, then
 there exists a unitary operator $ V'\in
[\wt\gH_1\ominus \gH, \wt\gH_2\ominus \gH]$ such that  $\wt A_1$
and $\wt A_2$ are unitarily equivalent by means of $\wt
V=I_{\gH}\oplus V'$.  By using these facts we suppose in the
following that the exit space extension $\wt A$ in \eqref{2.20} is
$\gH$-minimal, so that  $\wt A$  is defined by $R(\cd)$ uniquely
up to the unitary equivalence. Note that in this case the equality
$\mul\wt A=\mul A$ holds for any exit space extension $\wt A=\wt
A^*$ of $A$ if and only if $\mul A=\mul A^*$ or, equivalently, if
and only if the operator $A'$ (the operator part of $A$) is
densely defined.

\section{Boundary triplets for symmetric systems}
\subsection{Notations}
Let $\cI=[ a,b\rangle\; (-\infty < a< b\leq\infty)$ be an interval
of the real line (the symbol $\rangle$ means that the endpoint
$b<\infty$  might be either included  to $\cI$ or not). For a
given finite-dimensional Hilbert space $\bH$ denote by $\AC$ the
set of functions $f(\cd):\cI\to \bH$ which are absolutely
continuous on each segment $[a,\b]\subset \cI$ and let $AC
(\cI):=AC(\cI;\bC)$.

Next assume that $\D(\cd)$ is a $[\bH]$-valued Borel functions on
$\cI$ integrable on each compact interval $[a,\b]\subset \cI$ and
such that $\D(t)\geq 0$. Denote  by $\lI$  the semi-Hilbert  space
of  Borel functions $f(\cd): \cI\to \bH$ satisfying
$||f||_\D^2:=\int\limits_{\cI}(\D (t)f(t),f(t))_\bH \,dt<\infty$
(see e.g. \cite[Chapter 13.5]{DunSch}).  The semi-definite inner
product $(\cd,\cd)_\D$ in $\lI$ is defined by $
(f,g)_\D=\int\limits_{\cI}(\D (t)f(t),g(t))_\bH \,dt,\; f,g\in
\lI$. Moreover, let $\LI$ be the Hilbert space of the equivalence
classes in $\lI$ with respect to the semi-norm $||\cd||_\D$ and
let $\pi$ be the quotient map from $\lI$ onto $\LI$.

For a given finite-dimensional Hilbert space $\cK$ denote by
$\lo{\cK}$ the set of all Borel operator-functions $F(\cd): \cI\to
[\cK,\bH]$ such that $F(t)h\in \lI$ for each $h\in\cK$. It is
clear that the latter condition is equivalent to
$\int\limits_{\cI}||\D^{\frac 1 2} (t)F(t)||^2\,dt <\infty$.

\subsection{Symmetric systems}
In this subsection we provide some known results on symmetric
systems of differential equations.

Let $H$ and $\wh H$ be  finite-dimensional Hilbert spaces and let
\begin {equation}\label{3.0.1}
H_0=H\oplus\wh H, \quad \bH=H_0\oplus  H=H\oplus\wh H \oplus H.
\end{equation}
In the following we put
\begin {equation}\label{3.0.2}
\nu_+:=\dim H, \quad \wh\nu:=\dim \wh H, \quad \nu_-:=\dim
H_0=\nu_+ +\wh \nu, \quad n:=\dim \bH=\nu_+ + \nu_-.
\end{equation}

Let as above $\cI=[ a,b\rangle\; (-\infty < a< b\leq\infty)$ be an
interval in $\bR$ . Moreover, let $B(\cd)$ and $\D(\cd)$ be
$[\bH]$-valued Borel functions on $\cI$ integrable on each compact
interval $[a,\b]\subset \cI$ and satisfying $B(t)=B^*(t)$ and
$\D(t)\geq 0$ a.e. on $\cI$ and let $J\in [\bH]$ be  operator
\eqref{1.3}.

A first-order symmetric  system on an interval $\cI$ (with the
regular endpoint $a$) is a system of differential equations of the
form
\begin {equation}\label{3.1}
J y'(t)-B(t)y(t)=\D(t) f(t), \quad t\in\cI,
\end{equation}
where $f(\cd)\in \lI$. Together with \eqref{3.1} we consider also
the homogeneous  system
\begin {equation}\label{3.2}
J y'(t)-B(t)y(t)=\l \D(t) y(t), \quad t\in\cI, \quad \l\in\bC.
\end{equation}
A function $y\in\AC$ is a solution of \eqref{3.1} (resp.
\eqref{3.2}) if equality \eqref{3.1} (resp. \eqref{3.2} holds a.e.
on $\cI$. Moreover, a function $Y(\cd,\l):\cI\to [\cK,\bH]$ is an
operator solution of  equation \eqref{3.2} if $y(t)=Y(t,\l)h$ is a
(vector) solution of this equation for every $h\in\cK$ (here $\cK$
is a Hilbert space with $\dim\cK<\infty$).

In what follows  we always assume  that  system \eqref{3.1} is
definite in the sense of the following definition.
 \begin{definition}\label{def3.1}$\,$\cite{GK,KogRof75}
Symmetric system \eqref{3.1} is called definite if for each
$\l\in\bC$ and each solution $y$ of \eqref{3.2} the equality
$\D(t)y(t)=0$ (a.e. on $\cI$) implies $y(t)=0, \; t\in\cI$.
\end{definition}

As it is known \cite{Orc, Kac03, LesMal03} symmetric system
\eqref{3.1} gives rise to the \emph{maximal linear relations}
$\tma$ and $\Tma$  in  $\lI$ and $\LI$, respectively. They are
given by
\begin {equation}\label{3.4}
\begin{array}{c}
\tma=\{\{y,f\}\in(\lI)^2 :y\in\AC \;\;\text{and}\;\; \qquad\qquad\qquad\qquad \\
\qquad\qquad\qquad\qquad\qquad\quad  J y'(t)-B(t)y(t)=\D(t)
f(t)\;\;\text{a.e. on}\;\; \cI \}
\end{array}
\end{equation}
and $\Tma=\{\{\pi y,\pi f\}:\{y,f\}\in\tma\}$. Moreover the
Lagrange's identity
\begin {equation}\label{3.6}
(f,z)_\D-(y,g)_\D=[y,z]_b - (J y(a),z(a)),\quad \{y,f\}, \;
\{z,g\} \in\tma.
\end{equation}
holds with
\begin {equation}\label{3.7}
[y,z]_b:=\lim_{t \uparrow b}(J y(t),z(t)), \quad y,z \in\dom\tma.
\end{equation}
Formula \eqref{3.7} defines the skew-Hermitian  bilinear form
$[\cd,\cd]_b $ on $\dom \tma$, which plays a crucial  role in our
considerations. By using this form we define the \emph{minimal
relations} $\tmi$ in $\lI$ and $\Tmi$ in $\LI$ via
\begin {equation*}
\tmi=\{\{y,f\}\in\tma: y(a)=0 \;\; \text{and}\;\;
[y,z]_b=0\;\;\text{for each}\;\; z\in \dom \tma \}.
\end{equation*}
and $\Tmi=\{\{\pi y,\pi f\}:\{y,f\}\in\tmi\} $. According to
\cite{Orc,Kac03, LesMal03} $\Tmi$ is a closed symmetric linear
relation in $\LI$ and $\Tmi^*=\Tma$.
\begin{remark}\label{rem3.1a}
It  is known (see e.g. \cite{LesMal03}) that the maximal relation
$\Tma$ induced by the definite symmetric system \eqref{3.1}
possesses the following
 property: for any $\{\wt y, \wt f \}\in \Tma $ there exists
a unique function $y\in \AC \cap \lI $ such that $y\in \wt y$ and
$\{y,f\}\in \tma$ for any $f\in\wt f$. Below we associate such a
function $ y\in \AC \cap \lI$ with each pair $\{\wt y, \wt
f\}\in\Tma$.
\end{remark}

For any  $\l\in\bC$ denote by $\cN_\l$ the linear space of
solutions of the homogeneous system \eqref{3.2} belonging to
$\lI$. Definition \eqref{3.4} of $\tma$ implies
\begin{equation*}
\cN_\l=\ker (\tma-\l)=\{y\in\lI:\; \{y,\l y\}\in\tma\},
\quad\l\in\bC,
\end{equation*}
and hence $\cN_\l\subset \dom\tma$. As usual, denote by $ n_\pm
(\Tmi ):=\dim \gN_\l (\Tmi), \quad \l\in\bC_\pm,$ the  deficiency
indices of  $\Tmi$.  Since the system \eqref{3.1} is definite, $
\pi\cN_\l=\gN_\l (\Tmi)$ and $\ker(\pi\up\cN_\l)=\{0\},\;\;
\l\in\bC$.  This implies that $\dim \cN_\l=n_\pm (\Tmi), \;
\l\in\bC_\pm$.

The following lemma is obvious.
\begin{lemma}\label{lem3.2}
 If $Y(\cd,\l)\in \lo{\cK}$ is an operator solution  of Eq.
\eqref{3.2}, then the relation
\begin {equation}\label{3.8}
\cK\ni h\to (Y(\l) h)(t)=Y(t,\l)h \in\cN_\l.
\end{equation}
defines the linear mapping $Y(\l):\cK\to \cN_\l$ and, conversely,
for each such a mapping $Y(\l)$ there exists a unique operator
solution $Y(\cd,\l)\in \lo{\cK}$ of Eq.  \eqref{3.2} such that
\eqref{3.8} holds.
\end{lemma}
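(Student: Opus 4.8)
The plan is to read off both directions directly from the definitions, treating the statement as a bookkeeping identification of operator solutions in $\lo{\cK}$ with linear maps $\cK\to\cN_\l$; accordingly I expect no genuine obstacle, only a couple of routine points to pin down. For the forward direction I would fix an operator solution $Y(\cd,\l)\in\lo{\cK}$ and note that for each $h\in\cK$ the function $t\mapsto Y(t,\l)h$ solves \eqref{3.2} (by the definition of operator solution) and lies in $\lI$ (by the defining property of $\lo{\cK}$), hence belongs to $\cN_\l$; since each $Y(t,\l)\in[\cK,\bH]$ is linear, the assignment $h\mapsto Y(\cd,\l)h$ is linear, so \eqref{3.8} does define a linear map $Y(\l):\cK\to\cN_\l$.

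For the converse I would start from a linear map $Y(\l):\cK\to\cN_\l$ and set $Y(t,\l)h:=(Y(\l)h)(t)$ for $t\in\cI$, $h\in\cK$. The point to emphasize is that the elements of $\cN_\l$ are genuine functions whose vector operations are pointwise, so evaluation at a fixed $t$ is linear; together with linearity of $Y(\l)$ this makes $h\mapsto Y(t,\l)h$ linear for each $t$. As $\dim\cK<\infty$ this linear map $\cK\to\bH$ is automatically bounded, so $Y(t,\l)\in[\cK,\bH]$; and fixing a basis $\{e_j\}$ of $\cK$, each $t\mapsto Y(t,\l)e_j=(Y(\l)e_j)(t)$ is a solution of \eqref{3.2}, hence lies in $\AC$ and is in particular continuous, whence $Y(\cd,\l):\cI\to[\cK,\bH]$ is Borel. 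Finally $Y(t,\l)h=(Y(\l)h)(t)\in\cN_\l\subset\lI$ for every $h$ gives $Y(\cd,\l)\in\lo{\cK}$, and each $Y(\cd,\l)h=Y(\l)h$ solves \eqref{3.2}, so $Y(\cd,\l)$ is an operator solution satisfying \eqref{3.8}.

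Uniqueness is immediate, since any operator solution obeying \eqref{3.8} has its value at every $t$ and $h$ prescribed by $Y(t,\l)h=(Y(\l)h)(t)$ and so coincides with the one just constructed. The only steps that are not entirely automatic are the boundedness and Borel measurability of $Y(\cd,\l)$, both of which reduce to the finite dimensionality of $\cK$ and the fact that solutions of \eqref{3.2} lie in $\AC$; the one conceptual remark worth making explicit is that $\cN_\l$ is to be handled as a space of functions with pointwise operations, not of $\lI$-equivalence classes, which is exactly what legitimizes the pointwise definition of $Y(t,\l)$.
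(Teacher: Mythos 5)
Your proof is correct, and it matches the paper's intent exactly: the paper states this lemma without proof (calling it obvious), and your argument is precisely the routine verification being assumed, with the right points made explicit — that $\cN_\l\subset\lI$ consists of genuine $AC$ functions rather than equivalence classes (legitimizing pointwise evaluation), and that $\dim\cK<\infty$ yields boundedness of $Y(t,\l)$ and, via continuity of the columns, Borel measurability of $Y(\cd,\l)$. Nothing further is needed.
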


Next assume that
\begin {equation} \label{3.17.1}
U=\begin{pmatrix} u_1 & u_2 & u_3  \cr u_4 & u_5 & u_6
\end{pmatrix}: H\oplus\wh H\oplus H\to \wh H\oplus H
\end{equation}
is the operator  satisfying the relations
\begin{gather}
\ran U=\wh H\oplus H \label{3.17.2}\\
iu_2u_2^* - u_1u_3^*+u_3u_1^*=iI_{\wh H}, \qquad iu_5u_2^* -
u_4u_3^*+u_6u_1^*=0\label{3.17.3}\\
iu_5u_5^* + u_6u_4^*-u_4u_6^*=0\label{3.17.4}
\end{gather}
One can prove that the operator \eqref{3.17.1} admits an extension
to the $J$-unitary operator
\begin {equation} \label{3.17.5}
\wt U=\begin{pmatrix} u_7 & u_8 & u_9 \cr \hline  u_1 & u_2 & u_3
\cr u_4 & u_5 & u_6
\end{pmatrix}: H\oplus\wh H\oplus H\to H\oplus \wh H\oplus H,
\end{equation}
i.e. the operator satisfying  $\wt U^* J\wt U=J$.

In view of \eqref{3.0.1} each function $y\in\AC$ admits the
representation
\begin {equation}\label{3.18}
y(t)=\{y_0(t),\,\wh y(t), \, y_1(t) \}(\in H\oplus\wh H \oplus H),
\quad t\in\cI.
\end{equation}
Using \eqref{3.17.5} and the representation \eqref{3.18} of $y$ we
introduce the linear mappings $\G_{ja}:\AC\to H, \; j\in \{0,1\},$
and $\wh\G_{a}:\AC\to \wh H$ by setting
\begin{gather}
\G_{0a}y=u_7 y_0(a)+ u_8 \wh y(a)+ u_9 y_1(a),\quad y\in\AC\label{3.23}\\
\wh\G_{a}y=u_1 y_0(a)+ u_2 \wh y(a)+ u_3 y_1(a),\quad \G_{1a}y=u_4
y_0(a)+ u_5 \wh y(a)+ u_6 y_1(a). \label{3.24}
\end{gather}
Clearly, the mappings  $\wh\G_a$ and $\G_{1a}$ are determined by
the operator $U$, while $\G_{0a}$ is determined by the extension
$\wt U$. Moreover, the mapping
\begin {equation}\label{3.24a}
\G_a:=\left(\G_{0a},\, \wh\G_a,\, \G_{1a}\right)^\top :\AC\to
H\oplus \wh H\oplus H
\end{equation}
satisfies $\G_ay=\wt U y(a), \; y\in\AC$. Hence $\G_a$ is
surjective and
\begin {equation}\label{3.24b}
(J y(a),z(a))= -(\G_{1a}y,\G_{0a}z)+ (\G_{0a}y,\G_{1a}z)+i
(\wh\G_a y,\wh\G_a z), \quad y,z\in\AC.
\end{equation}

In what follows we associate with each operator $U$ (see
\eqref{3.17.1}) the operator solution $\f
(\cd,\l)=\f_U(\cd,\l)(\in [H_0,\bH]),\;\l\in\bC,$ of Eq.
\eqref{3.2} with the initial data
\begin {equation}\label{3.25}
\f_U(a,\l)= \begin{pmatrix} u_6^*  & iu_3^* \cr -iu_5^* & u_2^*
\cr -u_4^*  & -iu_1^*\end{pmatrix}:\underbrace { H\oplus\wh
H}_{H_0}\to \underbrace{H\oplus \wh H\oplus H}_{\bH}.
\end{equation}
One can easily verify that for each $J$-unitary extension $\wt U$
of $U$ one has
\begin {equation}\label{3.26}
\wt U \f_U(a,\l)=\begin{pmatrix}I_{H_0} \cr 0
\end{pmatrix}:H_0\to H_0\oplus H.
\end{equation}
The particular case of the operator $U$ and its $J$-unitary
extension $\wt U$ is (cf. \cite{HinSch06})
\begin {equation}\label{3.26a}
U= \begin{pmatrix}  0& I_{\wh H} & 0 \cr  \cos B & 0 & \sin
B\end{pmatrix}, \quad  \wt U=\begin{pmatrix} \sin B & 0 & -\cos B
\cr 0& I_{\wh H} & 0 \cr \cos B & 0 & \sin B\end{pmatrix},
\end{equation}
where $B=B^*\in [H]$. For such $U$ the  solution $\f_U(\cd,\l)$
is defined by the initial data
\begin {equation*}
\f_U(a,\l)=\begin{pmatrix} \sin B & 0  \cr 0& I_{\wh H}  \cr -\cos
B & 0
\end{pmatrix}: H\oplus\wh H\to H\oplus
\wh H\oplus H.
\end{equation*}
\subsection{Decomposing boundary triplets}\label{sub3.3}
According to \cite{BHSW10,Mog12} the  skew-Hermitian bilinear form
\eqref{3.7} has finite indices of inertia  $\nu_{b+} $ and
$\nu_{b-}$ and
\begin {equation} \label{3.27}
n_+(\Tmi)=\nu_+ +\nu_{b+}, \qquad n_-(\Tmi)=\nu_- +\nu_{b-}=\nu_+
+\wh \nu+ \nu_{b-}
\end{equation}
(for  $\nu_\pm$ see \eqref{3.0.2}). Moreover, the following lemma
is immediate from \cite[Lemma 5.1]{Mog12}.
\begin{lemma}\label{lem3.2a}
For any pair of finite-dimensional Hilbert spaces $\cH_b$ and
$\wh\cH_b$ with $\dim\cH_b=\min\{\nu_{b+},\nu_{b-}\}$ and $\dim
\wh\cH_b=|\nu_{b+}- \nu_{b-}|$ there exists a  surjective linear
mapping
\begin{gather}\label{3.29}
\G_b=(\G_{0b},  \wh\G_b,  \G_{1b})^\top:\dom\tma\to
\cH_b\oplus\wh\cH_b\oplus \cH_b
\end{gather}
such that for all $y,z \in \dom\tma$ the following equality is
valid
\begin{gather}\label{3.30}
[y,z]_b=i\cdot\sign (\nu_{b+}-\nu_{b-}) (\wh\G_b y, \wh\G_b
z)-(\G_{1b}y,\G_{0b}z)+(\G_{0b}y,\G_{1b}z).
\end{gather}
\end{lemma}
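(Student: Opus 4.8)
The plan is to construct the mapping $\G_b$ directly from the structure of the skew-Hermitian form $[\cd,\cd]_b$ on the quotient space $\dom\tma/\dom\tmi$. First I would observe that $[\cd,\cd]_b$ descends to a nondegenerate skew-Hermitian (i.e. $i$ times a Hermitian) form on the finite-dimensional quotient $\cB:=\dom\tma/(\ker[\cd,\cd]_b)$, whose dimension equals $\nu_{b+}+\nu_{b-}$ by the definition of the indices of inertia. On a finite-dimensional space carrying a nondegenerate Hermitian form of signature $(\nu_{b+},\nu_{b-})$, one can choose a basis in which the Gram matrix is $\mathrm{diag}(I_{\nu_{b+}},-I_{\nu_{b-}})$; equivalently, writing $p:=\min\{\nu_{b+},\nu_{b-}\}$ and $q:=|\nu_{b+}-\nu_{b-}|$, one can find a symplectic-type splitting with $p$ hyperbolic pairs plus $q$ definite vectors of the dominant sign. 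This is exactly the data encoded by the spaces $\cH_b$ ($\dim\cH_b=p$) and $\wh\cH_b$ ($\dim\wh\cH_b=q$) in the statement.

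Next I would turn this algebraic normal form into the claimed formula \eqref{3.30}. The $p$ hyperbolic pairs give rise to the two mappings $\G_{0b},\G_{1b}:\dom\tma\to\cH_b$ contributing the term $-(\G_{1b}y,\G_{0b}z)+(\G_{0b}y,\G_{1b}z)$, which is the standard off-diagonal (symplectic) block. The $q$ definite vectors of the sign $\sign(\nu_{b+}-\nu_{b-})$ give the mapping $\wh\G_b:\dom\tma\to\wh\cH_b$ contributing $i\cdot\sign(\nu_{b+}-\nu_{b-})(\wh\G_b y,\wh\G_b z)$; the factor $i$ appears because the form is skew-Hermitian rather than Hermitian, and the sign is dictated by whether the surplus of inertia lies on the positive or negative side. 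Surjectivity of $\G_b=(\G_{0b},\wh\G_b,\G_{1b})^\top$ is then automatic, since the chosen basis functionals are linearly independent on $\dom\tma$ and their number equals $\dim(\cH_b\oplus\wh\cH_b\oplus\cH_b)=2p+q=\nu_{b+}+\nu_{b-}=\dim\cB$.

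Since the statement is asserted to be immediate from \cite[Lemma 5.1]{Mog12}, I would not rederive the inertia computation but rather quote that lemma for the existence of a surjective mapping realizing the form, and merely reorganize its output into the block form $(\G_{0b},\wh\G_b,\G_{1b})^\top$ with the normalization above. Concretely, Lemma 5.1 of \cite{Mog12} should already provide functionals diagonalizing $[\cd,\cd]_b$; the only work is to group them into hyperbolic pairs and a definite remainder, relabel the corresponding coordinate maps as $\G_{0b},\G_{1b},\wh\G_b$, and verify that the resulting Gram decomposition reproduces \eqref{3.30} verbatim.

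The main obstacle I anticipate is bookkeeping of signs and factors of $i$: one must be careful that a skew-Hermitian form of signature $(\nu_{b+},\nu_{b-})$ produces the precise coefficient $i\cdot\sign(\nu_{b+}-\nu_{b-})$ on the $\wh\cH_b$-block and the correctly oriented symplectic pairing on the $\cH_b$-blocks. In particular, the case analysis $\nu_{b+}>\nu_{b-}$ versus $\nu_{b+}<\nu_{b-}$ must be handled uniformly through the $\sign$ factor, and one must check that the hyperbolic pairs always contribute the same off-diagonal term independent of which sign dominates. Everything else is routine finite-dimensional linear algebra once the nondegenerate quotient form is in hand.
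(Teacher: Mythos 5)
Your proposal is correct and matches the paper's treatment: the paper gives no argument beyond observing that the lemma is immediate from \cite[Lemma 5.1]{Mog12}, which is exactly the route you take, and your supplementary sketch (pass to the nondegenerate quotient of dimension $\nu_{b+}+\nu_{b-}$, split the signature $(\nu_{b+},\nu_{b-})$ into $\min\{\nu_{b+},\nu_{b-}\}$ hyperbolic pairs plus a definite remainder of the dominant sign, and read off \eqref{3.30} with surjectivity from the dimension count $2\dim\cH_b+\dim\wh\cH_b=\nu_{b+}+\nu_{b-}$) is the standard inertia argument underlying that cited lemma. The sign and factor-of-$i$ bookkeeping you flag does work out uniformly via the $\sign(\nu_{b+}-\nu_{b-})$ coefficient, so there is no gap.
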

Note that $\G_b y$ is in fact a singular boundary value of a
function $y\in\dom\tma$ in the sense of \cite[Chapter
13.2]{DunSch} (for more details see Remark 3.5 in \cite{Mog13.1}).

Assume that $n_+(\Tmi) < n_-(\Tmi)$. Then by \eqref{3.27} and
\eqref{3.0.2} $\wh\nu > \nu_{b+}-\nu_{b-}$ and hence the following
two alternative cases may hold:

\underline {\emph{ Case 1}}. $\;\; \wh\nu >\nu_{b+}-\nu_{b-}>0$.

\underline {\emph{ Case 2}}. $\;\; \wh\nu \geq 0 \geq
\nu_{b+}-\nu_{b-}$ and $\wh\nu\neq \nu_{b+}-\nu_{b-} (\neq 0)$.

Below we construct a boundary triplet for $\Tma$ separately in
each of these cases.

In \emph{ Case 1} one has $\dim \wh H(=\wh\nu)> \nu_{b+}-\nu_{b-}>
0$. Let $\wh H_1$ be a subspace in $\wh H$ with $\dim\wh H_1=
\nu_{b+}-\nu_{b-}$ and let $\cH_b$ be a Hilbert space with $\dim
\cH_b=\nu_{b-}$. Then by Lemma \ref{lem3.2a} there exists a
surjective linear mapping
\begin{gather}\label{3.30a}
\G_b=(\G_{0b},  \wh\G_b,  \G_{1b})^\top:\dom\tma\to \cH_b\oplus\wh
H_1\oplus \cH_b
\end{gather}
such that \eqref{3.30} holds for all $y,z \in \dom\tma$. Let $\wh
H$ be decomposed as $\wh H=\wh H_1\oplus\wh H_2$ with $\wh
H_2:=\wh H\ominus\wh H_1$ and let
\begin {equation}\label{3.30b}
\wh\G_a=(\wh\G_{a1}, \, \wh\G_{a2})^\top :\AC\to\wh H_1\oplus\wh
H_2
\end{equation}
be the block representation of the mapping $\wh\G_a$ (see
\eqref{3.24}). Moreover, let $H_0':=H\oplus \wh H_1$, so that in
view of \eqref{3.0.1} $H_0$ admits the representation
\begin {equation}\label{3.31}
H_0=\underbrace {H\oplus \wh H_1}_{H_0'}\oplus\wh H_2= H_0'\oplus
\wh H_2
\end{equation}
In \emph{Case 1} we let
\begin{gather}
\cH_0=H_0'\oplus\wh H_2\oplus\cH_b, \quad \cH_1=H_0'\oplus\cH_b,\label{3.32}\\
\G_0\{\wt y, \wt f\}=\{- \G_{1a}y +i(\wh\G_{a1}-\wh\G_b)y,\, i
\wh\G_{a2}y,\,
\G_{0b}y\} (\in H_0'\oplus \wh H_2 \oplus\cH_b),\label{3.33}\\
\G_1\{\wt y, \wt f\}=\{\G_{0a}y +\tfrac 1
2(\wh\G_{a1}+\wh\G_b)y,\,-\G_{1b}y\} (\in H_0'\oplus \cH_b),\quad
\{\wt y, \wt f\}\in\Tma.\label{3.34}
\end{gather}

Now assume that \emph{Case 2} holds. Let $\cH_b$ and $\wh\cH_b$ be
Hilbert spaces with $\dim\cH_b=\nu_{b+}$ and $\dim
\wh\cH_b=\nu_{b-}-\nu_{b+}$. Then by Lemma \ref{lem3.2a} there is
a surjective linear mapping \eqref{3.29} satisfying \eqref{3.30}.
Let $\wt\cH_b:=\cH_b\oplus \wh\cH_b$ (so that
$\cH_b\subset\wt\cH_b $) and let $\wt\G_{0b}:\dom\tma\to\wt\cH_b$
be the linear mapping given by
\begin {equation}\label{3.35}
\wt\G_{0b}=\G_{0b}+ \wh \G_b.
\end{equation}
In \emph{Case 2} we put
\begin {gather}
\cH_0=H\oplus \wh H \oplus \wt \cH_b, \qquad \cH_1=H \oplus \cH_b\label{3.36}\\
\G_0\{\wt y, \wt f\}=\{- \G_{1a}y,  i \wh\G_a y,\wt\G_{0b} y
\}(\in  H\oplus\wh
H\oplus \wt\cH_b)\label{3.37}\\
\G_1\{\wt y, \wt f\}=\{ \G_{0a}y,  -\G_{1b}y\} (\in
H\oplus\cH_b),\quad \{\wt y, \wt f\}\in\Tma. \label{3.38}
\end{gather}

Note that in both \emph{Cases 1} and \emph{2}  $\cH_1$ is a
subspace in $\cH_0$ and $\G_j$ is an operator from  $\Tma$ to
$\cH_j, \; j\in \{0.1\}$. Moreover,
 $\cH_2(=\cH_0\ominus \cH_1)=\wh H_2$  in \emph{Case
1} and $\cH_2=\wh H\oplus\wh \cH_b$ in \emph{Case 2}.
\begin{proposition}\label{pr3.3}
Assume that  $\wt U$ is  $J$-unitary operator \eqref{3.17.5} and
$\G_{0a}, \; \G_{1a}$ and $\wh\G_a$ are  linear mappings
\eqref{3.23}, \eqref{3.24}. Moreover, let $\G_b$ be surjective
linear mapping given either by\eqref{3.30a} (in Case 1) or
\eqref{3.29} (in Case 2) and satisfying \eqref{3.30}. Then a
collection $\Pi_-=\bta$ defined either  by
\eqref{3.32}--\eqref{3.34} (in {Case 1}) or
\eqref{3.36}--\eqref{3.38} (in {Case 2}) is a boundary triplet for
$\Tma$.
\end{proposition}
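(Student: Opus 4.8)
The plan is to verify the two conditions in Definition~\ref{def2.5}: the Green identity \eqref{2.10} and the surjectivity of the map $\G:\wh f\mapsto\{\G_0\wh f,\G_1\wh f\}$ from $\Tma$ onto $\cH_0\oplus\cH_1$ (recall $\Tma=\Tmi^*$, and $\cH_1\subset\cH_0$ by construction, with $\cH_2=\wh H_2$ in Case 1 and $\cH_2=\wh H\oplus\wh\cH_b$ in Case 2). Throughout I would identify each $\{\wt y,\wt f\}\in\Tma$ with the unique $y\in\AC\cap\lI$ and a representative $\{y,f\}\in\tma$ from Remark~\ref{rem3.1a}; then the boundary mappings at $a$ and at $b$ depend only on $\{\wt y,\wt f\}$, so \eqref{3.33}--\eqref{3.34} and \eqref{3.37}--\eqref{3.38} define $\G_0,\G_1$ unambiguously.

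For the Green identity I would start from the left-hand side of \eqref{2.10}: for $\wh f=\{\wt y,\wt f\}$ and $\wh g=\{\wt z,\wt g\}$ it equals $(f,z)_\D-(y,g)_\D$, hence by the Lagrange identity \eqref{3.6} it equals $[y,z]_b-(Jy(a),z(a))$. It then remains to show that the right-hand side of \eqref{2.10} reproduces this expression. I would substitute the explicit formulas for $\G_0,\G_1$, read off $P_2\G_0$ from the decomposition $\cH_0=\cH_1\oplus\cH_2$, and expand the three terms; after collecting them one separates an ``$a$-part'' and a ``$b$-part''. The $a$-part should match $-(Jy(a),z(a))$ through \eqref{3.24b} and the $b$-part should match $[y,z]_b$ through \eqref{3.30}, the sign factor $\sign(\nu_{b+}-\nu_{b-})$ being $+1$ in Case 1 and $\le 0$ in Case 2. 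In Case 2 everything is block-diagonal: the factor $i$ in $\G_0$ gives, via $-i(P_2\G_0\wh f,P_2\G_0\wh g)_{\cH_2}$, exactly $-i(\wh\G_a y,\wh\G_a z)$ and $-i(\wh\G_b y,\wh\G_b z)$, matching \eqref{3.24b} and the $\sign=-1$ term of \eqref{3.30}. The delicate case is Case 1: there $\cH_2=\wh H_2$ contributes only $-i(\wh\G_{a2}y,\wh\G_{a2}z)$, while inside the $H_0'=H\oplus\wh H_1$ block the combinations $i(\wh\G_{a1}-\wh\G_b)y$ in $\G_0$ and $\tfrac12(\wh\G_{a1}+\wh\G_b)y$ in $\G_1$ produce mixed $(\wh\G_{a1},\wh\G_b)$ terms; the hard part will be checking that these mixed products cancel and leave precisely $-i(\wh\G_{a1}y,\wh\G_{a1}z)+i(\wh\G_b y,\wh\G_b z)$. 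Combined with the $\wh H_2$-contribution and the splitting $\wh\G_a=(\wh\G_{a1},\wh\G_{a2})^\top$ of \eqref{3.30b}, this yields the $a$-contribution $-i(\wh\G_a y,\wh\G_a z)$ together with the $b$-contribution $+i(\wh\G_b y,\wh\G_b z)$, i.e. the $\sign=+1$ term of \eqref{3.30}. This cancellation hinges on the specific coefficients $i$ and $\tfrac12$.

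For surjectivity I would avoid constructing functions with prescribed boundary data and argue instead by injectivity plus a dimension count. First I would note that the passage from the raw boundary values $(\G_{0a}y,\wh\G_a y,\G_{1a}y,\G_{0b}y,\wh\G_b y,\G_{1b}y)$ to the pair $(\G_0\wh f,\G_1\wh f)$ is an invertible linear transformation: in Case 2 this is a permutation of blocks together with the scalars $-1$ and $i$, while in Case 1 one recovers $\wh\G_{a1}y$ and $\wh\G_b y$ from the two $\wh H_1$-blocks $i(\wh\G_{a1}-\wh\G_b)y$ and $\tfrac12(\wh\G_{a1}+\wh\G_b)y$ via a $2\times2$ system of determinant $i\neq 0$. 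Hence $\G_0\wh f=\G_1\wh f=0$ iff all raw values vanish; since the $a$-values are the components of $\wt Uy(a)$ with $\wt U$ invertible, this is equivalent to $y(a)=0$ and $\G_b y=0$. Because $\G_b$ is surjective (Lemma~\ref{lem3.2a}), $\G_b y=0$ is in turn equivalent to $[y,z]_b=0$ for all $z\in\dom\tma$, so the definition of $\tmi$ gives $\ker\G=\Tmi$; thus $\G$ induces an injection of $\Tma/\Tmi$ into $\cH_0\oplus\cH_1$. Finally I would close by comparing dimensions: the well-known equality $\dim(\Tma/\Tmi)=n_+(\Tmi)+n_-(\Tmi)$, together with \eqref{3.27} and \eqref{3.0.2} which give $\dim\cH_1=\nu_++\nu_{b+}=n_+(\Tmi)$ and $\dim\cH_0=\nu_-+\nu_{b-}=n_-(\Tmi)$ in both cases, shows that the two finite-dimensional spaces have equal dimension, so the injection is onto. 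This gives surjectivity of $\G$ and completes the proof that $\Pi_-$ is a boundary triplet for $\Tma$.
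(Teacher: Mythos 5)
Your proof is correct, and it splits into a half that coincides with the paper and a half that is genuinely different. For the Green identity \eqref{2.10} you follow exactly the paper's route: the paper likewise verifies by direct computation, using \eqref{3.24b} and \eqref{3.30}, that the right-hand side of \eqref{2.10} equals $[y,z]_b-(Jy(a),z(a))$ and then invokes the Lagrange identity \eqref{3.6}; your prediction about \emph{Case 1} is accurate -- the mixed $(\wh\G_{a1},\wh\G_b)$ products do cancel, precisely because of the coefficients $i$ and $\tfrac 1 2$, leaving $-i(\wh\G_{a1}y,\wh\G_{a1}z)+i(\wh\G_b y,\wh\G_b z)$, which combines with the $\wh H_2$-term into $-i(\wh\G_a y,\wh\G_a z)+i(\wh\G_b y,\wh\G_b z)$ as required by \eqref{3.30} with $\sign(\nu_{b+}-\nu_{b-})=+1$. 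For surjectivity, however, the paper disposes of the matter in one line: $\G=(\G_0,\G_1)^\top$ is surjective because $\G_a$ and $\G_b$ are, which implicitly uses that the value $y(a)\in\bH$ (via invertibility of $\wt U$) and the singular boundary value $\G_b y$ (via Lemma \ref{lem3.2a}) can be prescribed independently on $\dom\tma$. You instead prove $\ker\G=\Tmi$ -- your recovery of the raw boundary data is sound, including the $2\times 2$ system with determinant $i$ in \emph{Case 1}, the orthogonal splitting $\wt\G_{0b}=\G_{0b}+\wh\G_b$ into $\cH_b\oplus\wh\cH_b$ in \emph{Case 2}, and the equivalence $\G_b y=0\Leftrightarrow [y,z]_b=0$ for all $z\in\dom\tma$, which needs exactly the surjectivity of $\G_b$ together with \eqref{3.30} (in \emph{Case 2} with $\nu_{b+}=\nu_{b-}$ the sign term is vacuous since $\wh\cH_b=\{0\}$) -- and then conclude by the von Neumann formula $\dim(\Tma/\Tmi)=n_+(\Tmi)+n_-(\Tmi)$ together with \eqref{3.27} and \eqref{3.0.2}, whose arithmetic does check out in both cases: $\dim\cH_1=\nu_++\nu_{b+}$ and $\dim\cH_0=\nu_-+\nu_{b-}$. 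Your route buys a proof that never constructs functions with prescribed two-endpoint data and yields $\ker\G_0\cap\ker\G_1=\Tmi$ explicitly as a byproduct (a fact the paper only gets back abstractly from Proposition \ref{pr2.7}), at the price of importing \eqref{3.27} -- itself a nontrivial result quoted from \cite{BHSW10,Mog12}, though one the paper relies on anyway; the paper's argument is shorter but leaves the joint surjectivity of $(\G_a,\G_b)$ on $\dom\tma$ implicit.
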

\begin{proof}
The immediate calculation with taking \eqref{3.24b} and
\eqref{3.30} into account gives
\begin {gather*}
(\G_1\{\wt y,\wt f\}, \G_0\{\wt z,\wt g\})-(\G_0\{\wt y,\wt f\},
\G_1\{\wt z,\wt g\})-i (P_2 \G_0\{\wt y,\wt f\}, P_2\G_0\{\wt
z,\wt g\} )=\qquad
\qquad\qquad\\
\qquad\qquad \qquad\qquad\qquad\qquad \qquad\qquad=[y,z]_b-(J
y(a), z(a)),\quad \{\wt y,\wt f\},\{\wt z,\wt g\}\in\Tma.
\end{gather*}
This and the Lagrange's identity \eqref{3.6} yield identity
\eqref{2.10} for $\G_0$ and $\G_1$. Moreover, the mapping
$\G=(\G_0,\G_1)^\top$ is surjective, because so are $\G_a$ (see
\eqref{3.24a}) and $\G_b$.
\end{proof}
\begin{definition}\label{def3.4}
The boundary triplet $\Pi_-=\bta$  constructed in Proposition
\ref{pr3.3} is called a decomposing  boundary triplet for $\Tma$.
\end{definition}
\begin{proposition}\label{pr3.5}
Let in \emph{Case 1} $U$ be  operator \eqref{3.17.1} and let
$\wh\G_a$ and $\G_{1a}$ be linear mappings \eqref{3.24}. Moreover,
let $\wh H_1$ be a subspace in $\wh H$, let $\wh H_2=\wh H\ominus
\wh H_1$, let $\wh \G_{aj}, \; j\in\{1,2\},$ be defined by
\eqref{3.30b} and let $\G_b$ be  surjective linear mapping
\eqref{3.30a} satisfying \eqref{3.30}. Then:

{\rm (1)} The equalities
\begin{gather}
T=\{\{\wt y, \wt f\}\in\Tma: \, \G_{1a}y=0, \;\wh\G_{a1} y=\wh\G_b
y,\;
\wh\G_{a2}y=0,\;\G_{0b}y =\G_{1b}y=0 \}\label {3.39}\\
T^*=\{\{\wt y, \wt f\}\in\Tma: \, \G_{1a}y=0, \;\wh\G_{a1}
y=\wh\G_b y \}\label {3.40}
\end{gather}
define a symmetric extension $T$ of $\Tmi$ and its adjoint $T^*$.
Moreover, the deficiency indices of $T$ are $n_+(T)=\nu_{b-}$ and
$n_-(T)=\wh\nu +2\nu_{b-}-\nu_{b+}$.

{\rm (2)} The collection $\dot\Pi_-=\{\dot\cH_0\oplus \cH_b,
\dot\G_0, \dot\G_1\}$ with $\dot\cH_0=\wh H_2\oplus\cH_b$ and the
operators
\begin {equation}\label {3.41}
\dot\G_0 \{\wt y,\wt f\}=\{i\wh \G_{a2} y,\G_{0b}y\}(\in \wh
H_2\oplus\cH_b), \quad \dot\G_1 \{\wt y,\wt f\}=-\G_{1b}y, \quad
\{\wt y,\wt f\}\in T^*,
\end{equation}
is a boundary triplet for $T^*$ and the (maximal symmetric)
relation $A_0(=\ker\dot\G_0)$  is
\begin {equation}\label {3.42}
A_0=\{\{\wt y, \wt f\}\in\Tma: \, \G_{1a}y=0, \;\wh\G_{a1}
y=\wh\G_b y,\; \wh\G_{a2} y=0,\; \G_{0b}y =0 \}.
\end{equation}
\end{proposition}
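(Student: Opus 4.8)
The plan is to derive both assertions directly from Proposition~\ref{pr2.10a}, applied to the decomposing boundary triplet $\Pi_-=\bta$ for $\Tma=\Tmi^*$ constructed in Proposition~\ref{pr3.3} (Case~1), taking $A=\Tmi$. The only substantive task is to fix the correct orthogonal decomposition of $\cH_0$ and $\cH_1$ and to read off the block representations of $\G_0,\G_1$ from \eqref{3.33}, \eqref{3.34}.

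First I would set $\wh\cH:=H_0'=H\oplus\wh H_1$. By \eqref{3.31} and \eqref{3.32} this yields $\cH_1=\wh\cH\oplus\dot\cH_1$ and $\cH_0=\wh\cH\oplus\dot\cH_0$ with $\dot\cH_1=\cH_b$ and $\dot\cH_0=\wh H_2\oplus\cH_b$; since $\cH_2=\cH_0\ominus\cH_1=\wh H_2$ in Case~1, one indeed has $\dot\cH_0=\dot\cH_1\oplus\cH_2$, as required in Proposition~\ref{pr2.10a}. Reading off the $\wh\cH$- and $\dot\cH_j$-components in \eqref{3.33}, \eqref{3.34} gives the block representations $\G_0=(\wh\G_0,\dot\G_0)^\top$ and $\G_1=(\wh\G_1,\dot\G_1)^\top$ with
\begin{gather*}
\wh\G_0\{\wt y,\wt f\}=-\G_{1a}y+i(\wh\G_{a1}-\wh\G_b)y, \qquad \dot\G_0\{\wt y,\wt f\}=\{i\wh\G_{a2}y,\,\G_{0b}y\},\\
\wh\G_1\{\wt y,\wt f\}=\G_{0a}y+\tfrac12(\wh\G_{a1}+\wh\G_b)y, \qquad \dot\G_1\{\wt y,\wt f\}=-\G_{1b}y,
\end{gather*}
so that $\dot\G_0,\dot\G_1$ coincide with the operators in \eqref{3.41}.

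Next I would apply Proposition~\ref{pr2.10a}, (1), which produces a closed symmetric extension $\wt A$ and its adjoint $\wt A^*$, defined by $\wh\G_0\wh f=\dot\G_0\wh f=\dot\G_1\wh f=0$ and by $\wh\G_0\wh f=0$ respectively. The key observation is that $\wh\G_0\wh f=0$ is an equation in $\wh\cH=H\oplus\wh H_1$ in which $\G_{1a}y\in H$ and $i(\wh\G_{a1}-\wh\G_b)y\in\wh H_1$ lie in orthogonal summands; hence it splits into the two conditions $\G_{1a}y=0$ and $\wh\G_{a1}y=\wh\G_b y$. Adjoining $\dot\G_0\wh f=0$ (i.e.\ $\wh\G_{a2}y=0$, $\G_{0b}y=0$) and $\dot\G_1\wh f=0$ (i.e.\ $\G_{1b}y=0$) reproduces exactly the descriptions \eqref{3.39} and \eqref{3.40}. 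Thus $\wt A=T$ and $\wt A^*=T^*$, proving the first claim of~(1), the inclusion $\Tmi\subset T$ being automatic since $\wt A$ is a proper extension of $\Tmi$. Proposition~\ref{pr2.10a}, (2) then gives that $\dot\Pi_-=\{\dot\cH_0\oplus\cH_b,\dot\G_0\up T^*,\dot\G_1\up T^*\}$ is a boundary triplet for $T^*$, which is assertion~(2); and Proposition~\ref{pr2.7}, (3) applied to $\dot\Pi_-$ identifies $A_0=\ker\dot\G_0$ with \eqref{3.42}.

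It remains to compute the deficiency indices, which I would read off from Proposition~\ref{pr2.7}, (1) applied to the triplet $\dot\Pi_-$ for $T^*$: namely $n_+(T)=\dim\dot\cH_1=\dim\cH_b=\nu_{b-}$ and $n_-(T)=\dim\dot\cH_0=\dim\wh H_2+\dim\cH_b$. Since $\wh H=\wh H_1\oplus\wh H_2$ with $\dim\wh H_1=\nu_{b+}-\nu_{b-}$ gives $\dim\wh H_2=\wh\nu-(\nu_{b+}-\nu_{b-})$, and $\dim\cH_b=\nu_{b-}$, this yields $n_-(T)=\wh\nu+2\nu_{b-}-\nu_{b+}$, as asserted. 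The argument is essentially bookkeeping; the one point that needs care—and the only place something could go wrong—is the orthogonal splitting of $\wh\G_0\wh f=0$ into the pair $\G_{1a}y=0$, $\wh\G_{a1}y=\wh\G_b y$, which hinges on the decomposition $H_0'=H\oplus\wh H_1$ together with $\G_{1a}$ and $\wh\G_{a1}-\wh\G_b$ mapping into the respective orthogonal summands.
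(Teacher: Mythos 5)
Your proposal is correct and follows essentially the same route as the paper: the paper's proof likewise fixes a $J$-unitary extension $\wt U$ of $U$, forms the decomposing boundary triplet \eqref{3.32}--\eqref{3.34} for $\Tma$, and applies Proposition \ref{pr2.10a} with the splitting $\wh\cH=H_0'$, $\dot\cH_1=\cH_b$, $\dot\cH_0=\wh H_2\oplus\cH_b$. Your write-up merely makes explicit the bookkeeping the paper leaves to the reader, including the (correct) orthogonal splitting of $\wh\G_0\{\wt y,\wt f\}=0$ into $\G_{1a}y=0$ and $\wh\G_{a1}y=\wh\G_b y$, and the dimension count $n_-(T)=\dim\wh H_2+\dim\cH_b=\wh\nu+2\nu_{b-}-\nu_{b+}$ via Proposition \ref{pr2.7}, (1).
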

\begin{proof}
Let $\wt U$ be  $J$-unitary extension \eqref{3.17.5} of $U$, let
$\G_{0a}$ be operator \eqref{3.23} and let $\Pi_-=\bta$ be the
decomposing boundary triplet \eqref{3.32}--\eqref{3.34} for
$\Tma$. Applying to this triplet Proposition \ref{pr2.10a} one
obtains the desired statements.
\end{proof}
\begin{proposition}\label{pr3.6}
Let in Case 2 $U$ be  operator \eqref{3.17.1} and let $\wh\G_a$
and $\G_{1a}$ be  linear mappings \eqref{3.24}. Moreover, let
$\G_b$ be surjective linear mapping \eqref{3.29} satisfying
\eqref{3.30}, let $\wt\cH_b=\cH_b\oplus\wh\cH_b$ and let $\wt
\G_{0b}$ be given by \eqref{3.35}. Then:

{\rm (1)} The equalities
\begin{gather}
T=\{\{\wt y, \wt f\}\in\Tma: \, \G_{1a}y=0, \;\wh\G_{a}
y=0,\;\wt\G_{0b}y =
\G_{1b}y=0 \}\label {3.43}\\
T^*=\{\{\wt y, \wt f\}\in\Tma: \, \G_{1a}y=0 \}\label {3.44}
\end{gather}
define a symmetric extension $T$ of $\Tmi$ and its adjoint $T^*$.
Moreover, $n_+(T)=\nu_{b+}$ and $n_-(T)=\wh\nu +\nu_{b-}$.

{\rm (2)} The collection $\dot\Pi_-=\{\dot\cH_0\oplus \cH_b,
\dot\G_0, \dot\G_1\}$ with $\dot\cH_0=\wh H\oplus\wt\cH_b$ and the
operators
\begin {equation}\label {3.45}
\dot\G_0 \{\wt y,\wt f\}=\{i\wh \G_{a} y,\wt\G_{0b}y\}(\in \wh
H\oplus\wt\cH_b), \quad \dot\G_1 \{\wt y,\wt f\}=-\G_{1b}y, \quad
\{\wt y,\wt f\}\in T^*
\end{equation}
is a boundary triplet for $T^*$ and the (maximal symmetric)
relation $A_0(=\ker\dot\G_0)$  is
\begin {equation}\label {3.46}
A_0=\{\{\wt y, \wt f\}\in\Tma: \, \G_{1a}y=0, \;\wh\G_{a} y=0,\;
\wt\G_{0b}y =0 \}.
\end{equation}
\end{proposition}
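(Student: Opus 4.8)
The plan is to proceed exactly as in the proof of Proposition \ref{pr3.5}, only now starting from the Case 2 decomposing boundary triplet $\Pi_-=\bta$ for $\Tma$ given by \eqref{3.36}--\eqref{3.38}, whose existence is guaranteed by Proposition \ref{pr3.3}. The entire statement will be obtained by specializing the abstract reduction of Proposition \ref{pr2.10a} (applied with $A=\Tmi$, so that $A^*=\Tma$) to this concrete triplet.

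First I would choose the decomposition required by Proposition \ref{pr2.10a} by peeling off the $H$-component: set $\wh\cH=H$, $\dot\cH_1=\cH_b$ and $\dot\cH_0=\wh H\oplus\wt\cH_b$. Since $\cH_2=\cH_0\ominus\cH_1=\wh H\oplus\wh\cH_b$ in Case 2 and $\wt\cH_b=\cH_b\oplus\wh\cH_b$, the compatibility requirement $\dot\cH_0=\dot\cH_1\oplus\cH_2$ holds. Reading off the block representations of $\G_0$ and $\G_1$ from \eqref{3.37}, \eqref{3.38} relative to $\cH_0=\wh\cH\oplus\dot\cH_0$ and $\cH_1=\wh\cH\oplus\dot\cH_1$ gives $\wh\G_0\{\wt y,\wt f\}=-\G_{1a}y$, $\dot\G_0\{\wt y,\wt f\}=\{i\wh\G_a y,\wt\G_{0b}y\}$, $\wh\G_1\{\wt y,\wt f\}=\G_{0a}y$ and $\dot\G_1\{\wt y,\wt f\}=-\G_{1b}y$; in particular $\dot\G_0$ and $\dot\G_1$ coincide with the operators in \eqref{3.45}.

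Statement (1) then follows from Proposition \ref{pr2.10a}, (1): the conditions $\wh\G_0\wh f=\dot\G_0\wh f=\dot\G_1\wh f=0$ read $\G_{1a}y=0$, $\wh\G_a y=0$, $\wt\G_{0b}y=0$, $\G_{1b}y=0$, which is precisely \eqref{3.43}, while $\wh\G_0\wh f=0$ reads $\G_{1a}y=0$, i.e. \eqref{3.44}. Hence $T$ is a closed symmetric extension of $\Tmi$ with $T^*$ as indicated. Statement (2) is immediate from Proposition \ref{pr2.10a}, (2), which produces the boundary triplet $\dot\Pi_-=\{\dot\cH_0\oplus\cH_b,\dot\G_0,\dot\G_1\}$ for $T^*$; the identification $A_0=\ker\dot\G_0$ via \eqref{2.11} (Proposition \ref{pr2.7}, (3)) then yields \eqref{3.46}.

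Finally the deficiency indices come from a dimension count using Proposition \ref{pr2.7}, (1) for the triplet $\dot\Pi_-$: one has $n_+(T)=\dim\dot\cH_1=\dim\cH_b=\nu_{b+}$ and $n_-(T)=\dim\dot\cH_0=\wh\nu+\dim\wt\cH_b$, where $\dim\wt\cH_b=\dim\cH_b+\dim\wh\cH_b=\nu_{b+}+(\nu_{b-}-\nu_{b+})=\nu_{b-}$, so $n_-(T)=\wh\nu+\nu_{b-}$. Thus the argument is almost entirely bookkeeping; the one point needing genuine care is checking that the concrete $\G_0,\G_1$ of \eqref{3.37}, \eqref{3.38} split along $\wh\cH=H$ into exactly the form demanded by Proposition \ref{pr2.10a}, together with the dimension identity $\dim\wt\cH_b=\nu_{b-}$ that delivers the asserted value of $n_-(T)$.
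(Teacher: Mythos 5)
Your proposal is correct and follows exactly the route the paper intends: the paper omits this proof, noting it is ``similar to that of Proposition \ref{pr3.5}'', which consists precisely in taking the Case 2 decomposing boundary triplet \eqref{3.36}--\eqref{3.38} from Proposition \ref{pr3.3} and applying Proposition \ref{pr2.10a} (with Proposition \ref{pr2.7} for the deficiency indices and the relation $A_0=\ker\dot\G_0$). Your splitting $\wh\cH=H$, $\dot\cH_1=\cH_b$, $\dot\cH_0=\wh H\oplus\wt\cH_b$ and the count $\dim\wt\cH_b=\nu_{b+}+(\nu_{b-}-\nu_{b+})=\nu_{b-}$ are exactly the details the paper leaves to the reader.
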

We omit the proof of this proposition, because it is similar to
that of Proposition \ref{pr3.5}.
\section{$\cL_\D^2$-solutions of boundary value problems}
\subsection{Basic assumption}\label{sub4.1}
In what follows we  suppose (unless otherwise stated) that  system
\eqref{3.1} satisfies $n_-(\Tmi) < n_+(\Tmi)$ and the following
assumptions are fulfilled:
\begin{itemize}
\item[(A1)]
 $U$ is the operator \eqref{3.17.1} satisfying  \eqref{3.17.2} -
\eqref{3.17.4} and $\wh\G_a$ and $\G_{1a}$ are the linear mappings
\eqref{3.24}.

\item[(A2)]
In \emph {Case 1}  $\wh H$ is decomposed as $\wh H=\wh
H_1\oplus\wh H_2$, $\wh\G_{aj}:\AC\to\wh H_j, \; j\in \{1,2\}, $
are the operators given by \eqref{3.30b}, $\cH_b$ is a
finite-dimensional Hilbert space and $\G_b$ is a surjective
operator \eqref{3.30a} satisfying \eqref{3.30}.

\item[(A3)]
In \emph {Case 2} $\cH_b$ and $\wh\cH_b$ are finite-dimensional
Hilbert spaces, $\G_b$ is a surjective operator \eqref{3.29}
satisfying  \eqref{3.30}, $\wt\cH_b=\cH_b\oplus \wh\cH_b$ and
$\wt\G_{0b}:\dom\tma\to \wt\cH_b$ is the operator \eqref{3.35}.
\end{itemize}

\subsection{Case 1}
\begin{definition}\label{def4.1}
In \Ca{1} a boundary parameter  is a collection $\pair\in\wt
R_-(\wh H_2\oplus\cH_b,\cH_b)$. A truncated boundary parameter is
a boundary parameter $\pair$ satisfying $\tau_-(\l)\subset
(\{0\}\oplus\cH_b)\oplus \cH_b,\; \l\in\bC_-$.
\end{definition}
\noindent According to Subsection \ref{sub2.2} a boundary
parameter $\pair$ admits the representation
\begin {equation}\label{4.1}
\tau_+(\l)=\{(C_0(\l),C_1(\l));\cH_b\}, \;\;\l\in\bC_+; \quad
\tau_-(\l)=\{(D_0(\l),D_1(\l));\wh H_2\oplus\cH_b\},
\;\;\l\in\bC_-
\end{equation}
with holomorphic operator functions $C_0(\l)(\in [\wh
H_2\oplus\cH_b,\cH_b]), \; C_1(\l)(\in [\cH_b])$ and $D_0(\l)(\in
[\wh H_2\oplus\cH_b]), \; D_1(\l)(\in [\cH_b,\wh H_2\oplus
\cH_b])$. Moreover, a truncated boundary parameter $\pair$ admits
the representation \eqref{4.1} with
\begin {equation}\label{4.1.1}
D_0(\l)=\begin{pmatrix} I_{\wh H_2}& 0 \cr  0 & \ov
D_0(\l)\end{pmatrix}:\wh H_2\oplus\cH_b \to \wh H_2\oplus\cH_b,\;
\;\;D_1(\l)=\begin{pmatrix}0 \cr \ov D_1(\l)
\end{pmatrix}:\cH_b\to \wh H_2\oplus\cH_b.
\end{equation}

Let $\pair$ be a boundary parameter \eqref{4.1} and let
\begin {equation}\label{4.1a}
\begin{array}{c}
C_0(\l)=(\wh C_{02}(\l),C_{0b}(\l)):\wh H_2\oplus\cH_b\to \cH_b, \\
D_0(\l)=(\wh D_{02}(\l),D_{0b}(\l)):\wh H_2\oplus\cH_b\to(\wh
H_2\oplus \cH_b)
\end{array}
\end{equation}
be the block representations of $C_0(\l)$ and $D_0(\l)$. For a
given function $f\in\lI$ consider the following boundary value
problem:
\begin{gather}
J y'-B(t)y=\l \D(t)y+\D(t)f(t), \quad t\in\cI,\label{4.2}\\
\G_{1a}y=0, \quad \wh \G_{a1} y= \wh\G_b y,\quad \l\in\CR,\label{4.3}\\
i \wh C_{02}(\l)\wh\G_{a2}y+ C_{0b}(\l)\G_{0b}y
+C_1(\l)\G_{1b}y=0,
\quad \l\in\bC_+ \label{4.4}\\
i \wh D_{02}(\l)\wh\G_{a2}y+ D_{0b}(\l)\G_{0b}y
+D_1(\l)\G_{1b}y=0, \quad \l\in\bC_-.\label{4.5}
\end{gather}
A function $y(\cd,\cd):\cI\tm (\CR)\to\bH$ is called a solution of
this problem if for each $\l\in\CR$ the function $y(\cd,\l)$
belongs to $\AC\cap\lI$ and satisfies the equation \eqref{4.2}
a.e. on $\cI$ (so that $y\in\dom\tma$) and the boundary conditions
\eqref{4.3} -- \eqref{4.5}.
\begin{theorem}\label{th4.2}
Let in \Ca{1} $T$ be a symmetric relation in $\LI$ defined by
\eqref{3.39}. If $\pair$ is a boundary parameter \eqref{4.1}, then
for every $f\in\lI$ the boundary problem \eqref{4.2} - \eqref{4.5}
has a unique solution $y(t,\l)=y_f(t,\l) $ and the equality
\begin {equation*}
R(\l)\wt f = \pi(y_f(\cd,\l)), \quad \wt f\in \LI, \quad f\in\wt
f, \quad \l\in\CR,
\end{equation*}
defines a generalized resolvent $R(\l)=:R_\tau(\l)$ of $T$.
Conversely, for each generalized resolvent $R(\l)$ of $T$ there
exists a unique boundary parameter $\tau$ such that
$R(\l)=R_\tau(\l)$.
\end{theorem}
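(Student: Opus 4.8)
The plan is to deduce Theorem \ref{th4.2} from the abstract description of generalized resolvents associated with the boundary triplet $\dot\Pi_-$ for $T^*$ provided by Proposition \ref{pr3.5}, (2). Recall that $\dot\Pi_-=\{\dot\cH_0\oplus\cH_b,\dot\G_0,\dot\G_1\}$ with $\dot\cH_0=\wh H_2\oplus\cH_b$ and $\dot\G_0,\dot\G_1$ given by \eqref{3.41}. Using the deficiency indices $n_+(T)=\nu_{b-}$ and $n_-(T)=\wh\nu+2\nu_{b-}-\nu_{b+}$ from Proposition \ref{pr3.5}, (1) one checks that $\dim\cH_b=n_+(T)$ and $\dim\dot\cH_0=n_-(T)$, so that $\cH_b$ is a subspace of $\dot\cH_0$ and the class of boundary parameters in Definition \ref{def4.1} is exactly $\wt R_-(\dot\cH_0,\cH_b)$. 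Hence the general theorem of \cite{Mog13.2} on generalized resolvents in terms of a $\Pi_-$-triplet applies to $\dot\Pi_-$ and yields a bijection $\tau\mapsto R_\tau(\cd)$ between boundary parameters $\pair\in\wt R_-(\wh H_2\oplus\cH_b,\cH_b)$ and generalized resolvents of $T$, where $R_\tau(\l)$ is characterized by the abstract boundary condition $\{\dot\G_0\wh f,\dot\G_1\wh f\}\in\tau_\pm(\l)$ imposed on $\wh f=\{y,\l y+f\}\in T^*$ for $\l\in\bC_\pm$. It remains only to rewrite this characterization in the explicit form \eqref{4.3}--\eqref{4.5}.

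To carry this out I would fix $\wt f\in\LI$, a representative $f\in\wt f$ and $\l\in\CR$, and take for $y$ the unique function in $\AC\cap\lI$ representing $\wh f$ according to Remark \ref{rem3.1a}. Unfolding the membership $\wh f\in T^*$ by means of the explicit form \eqref{3.40} of $T^*$ shows that $y$ solves the inhomogeneous equation \eqref{4.2} a.e. on $\cI$ and satisfies the $\l$-independent conditions $\G_{1a}y=0$ and $\wh\G_{a1}y=\wh\G_b y$, i.e. \eqref{4.3}. Substituting $\dot\G_0\wh f=\{i\wh\G_{a2}y,\G_{0b}y\}$ and $\dot\G_1\wh f=-\G_{1b}y$ from \eqref{3.41} into the pair representations \eqref{4.1}, \eqref{4.1a} of $\tau_\pm$ (read off via \eqref{2.2}) turns $\{\dot\G_0\wh f,\dot\G_1\wh f\}\in\tau_+(\l)$ into \eqref{4.4} for $\l\in\bC_+$ and $\{\dot\G_0\wh f,\dot\G_1\wh f\}\in\tau_-(\l)$ into \eqref{4.5} for $\l\in\bC_-$. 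Thus $y=y_f(\cd,\l)$ is a solution of the boundary value problem \eqref{4.2}--\eqref{4.5} and $R_\tau(\l)\wt f=\pi(y_f(\cd,\l))$.

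Existence and uniqueness of the solution at the level of genuine functions would then follow without extra work. The operator $R_\tau(\l)\in[\LI]$ supplied by the abstract theorem determines $\pi y$ uniquely, and, since the system is definite, Remark \ref{rem3.1a} provides a unique representative $y=y_f(\cd,\l)\in\AC\cap\lI$; uniqueness among solutions of \eqref{4.2}--\eqref{4.5} is guaranteed because every $\l\in\CR$ is a point of regular type of the symmetric relation $T$, so the homogeneous problem has only the zero solution. The converse assertion --- that each generalized resolvent of $T$ equals $R_\tau(\l)$ for a unique $\tau$ --- is immediate from the bijectivity of the abstract correspondence, the translation above being reversible.

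The step I expect to be the main obstacle is precisely this translation: one must verify that the abstract resolvent $R_\tau(\l)\in[\LI]$ is faithfully represented by the concrete $\AC\cap\lI$ solution of \eqref{4.2}--\eqref{4.5}, keeping careful track of the asymmetric roles of $\bC_+$ and $\bC_-$ (with $P_1\dot\G_0=\G_{0b}$ entering the $\bC_+$-analysis) and of the sign introduced by $\dot\G_1=-\G_{1b}$ when matching the boundary conditions against the pair representations \eqref{4.1a}. Everything else has been prepared by Propositions \ref{pr2.8}, \ref{pr2.10a} and \ref{pr3.5}, so the argument reduces to this bookkeeping together with an appeal to \cite{Mog13.2}.
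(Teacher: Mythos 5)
Your proposal is correct and takes essentially the same route as the paper, whose entire proof consists of applying the abstract result \cite[Theorem 3.11]{Mog13.2} on generalized resolvents to the boundary triplet $\dot\Pi_-$ for $T^*$ constructed in Proposition \ref{pr3.5}. The dimension check identifying the parameter class with $\wt R_-(\dot\cH_0,\cH_b)$ and the translation of the abstract condition $\{\dot\G_0\wh f,\dot\G_1\wh f\}\in\tau_\pm(\l)$ into \eqref{4.3}--\eqref{4.5} (including the sign bookkeeping for $\dot\G_1=-\G_{1b}$) is precisely the routine verification the paper leaves implicit.
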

\begin{proof}
Let $\dot \Pi_-$ be the boundary triplet  for $T^*$ defined in
Proposition \ref{pr3.5}. Applying to this triplet \cite[Theorem
3.11]{Mog13.2} we obtain the required statements.
\end{proof}
\begin{remark}\label{rem4.3}
Let $\tau_0=\{\tau_{+},\tau_{-}\}$ be a boundary parameter
\eqref{4.1}  with
\begin {equation}\label{4.6}
C_0(\l)\equiv P_{\cH_b}, \quad C_1(\l)\equiv 0 , \quad D_0(\l)
\equiv I_{\wh H_2\oplus \cH_b},\quad D_1(\l)\equiv 0,
\end{equation}
 and let $A_0$ be a symmetric relation \eqref{3.42}. Then
\begin {equation}\label{4.7}
R_{\tau_0}(\l)=(A_0^*-\l)^{-1},\;\l\in\bC_+ \;\;\text{and}\;\;
R_{\tau_0}(\l)=(A_0-\l)^{-1},\;\l\in\bC_-.
\end{equation}
\end{remark}
\begin{proposition}\label{pr4.4}
Let in \Ca{1} $P_H, \; P_{\wh H_1} $ and $P_{\wh H_2}$ be the
orthoprojectors in $H_0$ onto $H,\; \wh H_1$ and $\wh H_2$
respectively (see \eqref{3.31}) and let $\wt P_{\wh H_2}$
($P_{\cH_b}$) be the orthoprojector in $\wh H_2\oplus\cH_b$ onto
$\wh H_2$ (resp. $\cH_b$). Then:

{\rm (1)} For any $\l\in\CR$ there exists a unique operator
solution $v_0(\cd,\l)\in\lo{H_0}$ of Eq. \eqref{3.2} satisfying
\begin{gather}
\G_{1a} v_0(\l)=-P_H, \;\;\; i(\wh\G_{a1}-\wh\G_b)v_0(\l)=P_{\wh
H_1},
\;\;\;\G_{0b} v_0(\l)=0,\;\;\; \l\in\CR \label{4.8}\\
i\wh\G_{a2}v_0(\l)=P_{\wh H_2}, \quad \l\in\bC_-\label{4.9}
\end{gather}

{\rm(2)} For any $\l\in\bC_+ \; (\l\in\bC_-)$ there exists a
unique operator solution $u_+(\cd,\l)\in\lo{\cH_b}$ (resp.
$u_-(\cd,\l)\in\lo{\wh H_2\oplus\cH_b}$) of Eq. \eqref{3.2} such
that
\begin{gather}
\G_{1a}u_\pm(\l)=0, \qquad i(\wh\G_{a1}-\wh\G_b)u_\pm(\l)=0,\quad
\l\in\bC_\pm \label{4.10}\\
\G_{0b}u_+(\l)=I_{\cH_b},\;\; \l\in\bC_+ \label{4.11}\\
\G_{0b}u_-(\l)=P_{\cH_b}, \qquad i \wh\G_{a2}u_-(\l)=\wt P_{\wh
H_2},\quad \l\in\bC_-.\label{4.11a}
\end{gather}
In formulas \eqref{4.8}-- \eqref{4.11a} $v_0(\l)$ and $u_\pm(\l)$
are linear mappings from Lemma \ref{lem3.2} corresponding to the
solutions $v_0(\cd,\l)$ and $u_{\pm}(\cd,\l)$ respectively.
\end{proposition}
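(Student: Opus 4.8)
The plan is to transfer everything to the linear-mapping description of Lemma \ref{lem3.2} and then read off existence and uniqueness from the isomorphism properties of the decomposing boundary triplets, which are furnished by Proposition \ref{pr2.8}(1). Because the system is definite, the quotient map $\pi$ is injective on $\cN_\l$ and $\pi\cN_\l=\gN_\l(\Tmi)$; hence, by Lemma \ref{lem3.2}, producing an operator solution in $\lo{\cK}$ with values in $\cN_\l$ is the same as producing a linear map $\cK\to\cN_\l$, and such a map is pinned down, separately for each argument, as soon as its image under the appropriate $\G_0$-type boundary operator is prescribed, provided that operator restricts to an isomorphism on the relevant defect subspace. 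The whole proposition thereby reduces to matching the conditions \eqref{4.8}--\eqref{4.11a} against the components of the boundary maps \eqref{3.33} and \eqref{3.41}.

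For part (1) I would work with the decomposing boundary triplet $\Pi_-$ for $\Tma$ given by \eqref{3.32}--\eqref{3.34}, for which $\cH_0=H_0'\oplus\wh H_2\oplus\cH_b$ and $\cH_1=H_0'\oplus\cH_b$. Reading off the three components of $\G_0$ in \eqref{3.33} and using the decomposition $H_0=H_0'\oplus\wh H_2$ from \eqref{3.31}, the conditions \eqref{4.8} together with \eqref{4.9} say exactly that $\G_0 v_0(\l)h=\{P_{H_0'}h,\, P_{\wh H_2}h,\, 0\}$, i.e.\ $\G_0 v_0(\l)=\iota$ for $\l\in\bC_-$, where $\iota:H_0\hookrightarrow\cH_0$ is the canonical embedding; while on $\bC_+$ the three conditions \eqref{4.8} alone say $P_1\G_0 v_0(\l)h=\{P_{H_0'}h,\, 0\}$. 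By Proposition \ref{pr2.8}(1) the map $\G_0\up\wh\gN_\l(\Tmi)$ is an isomorphism onto $\cH_0$ for $\l\in\bC_-$, and $P_1\G_0\up\wh\gN_\l(\Tmi)$ is an isomorphism onto $\cH_1$ for $\l\in\bC_+$. In each case the right-hand side is a prescribed element of the image space, so inverting the isomorphism and lifting back through $\pi$ yields, for each $h$, a unique solution in $\cN_\l$; this defines the unique linear map $v_0(\l):H_0\to\cN_\l$, and Lemma \ref{lem3.2} converts it into the required $v_0(\cd,\l)\in\lo{H_0}$.

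For part (2) I would instead use the reduced boundary triplet $\dot\Pi_-$ for $T^*$ from Proposition \ref{pr3.5}(2), whose boundary operator is $\dot\G_0\{\wt y,\wt f\}=\{i\wh\G_{a2}y,\,\G_{0b}y\}\in\wh H_2\oplus\cH_b$, with compression $\dot P_1\dot\G_0\{\wt y,\wt f\}=\G_{0b}y$. By \eqref{3.40} the defect $\gN_\l(T)$ consists precisely of the solutions $y\in\cN_\l$ with $\G_{1a}y=0$ and $\wh\G_{a1}y=\wh\G_b y$, which are exactly the first two requirements in \eqref{4.10}; so the remaining condition \eqref{4.11} reads $\dot P_1\dot\G_0 u_+(\l)=I_{\cH_b}$ and the pair \eqref{4.11a} reads $\dot\G_0 u_-(\l)=I_{\wh H_2\oplus\cH_b}$. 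By Proposition \ref{pr2.8}(1) applied to $\dot\Pi_-$, the compression $\dot P_1\dot\G_0\up\wh\gN_\l(T)$ is an isomorphism onto $\cH_b$ for $\l\in\bC_+$, and $\dot\G_0\up\wh\gN_\l(T)$ is an isomorphism onto $\wh H_2\oplus\cH_b$ for $\l\in\bC_-$; the target dimensions agree with the indices $n_+(T)=\nu_{b-}$ and $n_-(T)=\wh\nu+2\nu_{b-}-\nu_{b+}$ of Proposition \ref{pr3.5}(1). Inverting these isomorphisms and lifting through $\pi$ determines $u_+(\l)$ and $u_-(\l)$ uniquely, and Lemma \ref{lem3.2} produces the operator solutions $u_+(\cd,\l)\in\lo{\cH_b}$ and $u_-(\cd,\l)\in\lo{\wh H_2\oplus\cH_b}$.

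The only point requiring genuine care is the bookkeeping in the second and third paragraphs: one must track the signs and the factor $i$ in \eqref{3.33} and \eqref{3.41}, and, above all, respect the asymmetry between $\bC_-$ and $\bC_+$ — on $\bC_-$ one uses the full $\G_0$ (landing in $\cH_0$), whereas on $\bC_+$ only its compression $P_1\G_0$ (landing in $\cH_1$) is invertible. This asymmetry is exactly the device by which the triplet accommodates the unequal deficiency indices $n_+(\Tmi)<n_-(\Tmi)$, and it is the reason the condition \eqref{4.9} on $\wh\G_{a2}v_0$ is imposed only for $\l\in\bC_-$. Once the boundary conditions are rewritten as the equalities $\G_0 v_0(\l)=\iota$ and $\dot\G_0 u_-(\l)=I_{\wh H_2\oplus\cH_b}$ (together with their $\bC_+$ analogues), existence and uniqueness follow at once from Proposition \ref{pr2.8}(1) and the definiteness of the system.
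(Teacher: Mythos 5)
Your proof is correct, and it runs on the same engine as the paper's own argument --- the isomorphism statements of Proposition \ref{pr2.8}, (1), Lemma \ref{lem3.2}, and definiteness of the system --- but it is organized differently at two points. The paper does not invert the boundary maps datum by datum: it introduces the single isomorphisms $Z_\pm(\l)$ with $\g_\pm(\l)=\pi Z_\pm(\l)$, translates \eqref{2.19} into the identities \eqref{4.15.1}, \eqref{4.15.2}, and then \emph{defines} $v_0(\l)$, $u_+(\l)$, $u_-(\l)$ as block columns of $Z_\pm(\l)$ (formulas \eqref{4.15.6}--\eqref{4.15.10}), reading off \eqref{4.8}--\eqref{4.11a} afterwards; in particular, for part (2) it stays with the big decomposing triplet $\Pi_-$ for $\Tma$, whereas you pass to the reduced triplet $\dot\Pi_-$ for $T^*$ from Proposition \ref{pr3.5}, (2) --- a legitimate and arguably cleaner move, since \eqref{3.40} identifies $\gN_\l(T)$ with the solutions in $\cN_\l$ obeying \eqref{4.10} and turns \eqref{4.11}, \eqref{4.11a} into $P_{\cH_b}\dot\G_0\, u_+(\l)=I_{\cH_b}$ and $\dot\G_0\, u_-(\l)=I_{\wh H_2\oplus\cH_b}$; by Proposition \ref{pr2.10a}, (3) this is equivalent to the paper's block extraction. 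Your uniqueness argument also differs: the paper defers it to uniqueness of solutions of the homogeneous boundary value problem (Theorem \ref{th4.2}, following \cite[Proposition 4.3]{Mog13.1}), while you obtain it directly from injectivity of $P_1\G_0\up\wh\gN_\l(\Tmi)$ and $\G_0\up\wh\gN_\l(\Tmi)$ together with injectivity of $\pi\up\cN_\l$, which is more self-contained. One step you should spell out rather than file under ``bookkeeping'': on $\bC_+$ the triplet datum only prescribes the combined first component $\bigl(-\G_{1a}+i(\wh\G_{a1}-\wh\G_b)\bigr)v_0(\l)=P_{H_0'}$, and the passage back to the two \emph{separate} equalities in \eqref{4.8} (and similarly in \eqref{4.10}) uses that $\G_{1a}$ and $\wh\G_{a1}-\wh\G_b$ take values in the mutually orthogonal summands $H$ and $\wh H_1$ of $H_0'$, so that the single identity splits componentwise; this is exactly how the paper deduces \eqref{4.15.3}--\eqref{4.15.5} from \eqref{4.15.1} and \eqref{4.15.2}, and without it your reformulation ``the conditions say exactly that $\G_0 v_0(\l)=\iota$'' proves uniqueness but not existence for the stronger, separated conditions.
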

\begin{proof}
Let $\wt U$ be the $J$-unitary extension \eqref{3.17.5} of $U$,
let $\G_{0a}$ be the operator \eqref{3.23} and let $\Pi_-=\bta$ be
the decomposing boundary triplet \eqref{3.32}-\eqref{3.34} for
$\Tma$. Assume also that $\g_\pm(\cd)$ are the $\g$-fields  of
$\Pi_-$. Since the quotient mapping $\pi$ isomorphically maps
$\cN_\l$ onto $\gN_\l (\Tmi)$, it follows that for every
$\l\in\bC_+\;(\l\in\bC_-)$ there exists an isomorphism
$Z_+(\l):\cH_1\to \cN_\l$ (resp. $Z_-(\l):\cH_0\to \cN_\l$) such
that
\begin {equation}\label{4.12}
\g_+(\l)=\pi Z_+(\l), \;\;\;\l\in\bC_+; \qquad \g_-(\l)=\pi
Z_-(\l), \;\;\;\l\in\bC_-.
\end{equation}
Let $\G_0'$ and $\G_1'$ be the linear mappings given by
\begin {equation}\label{4.13}
\begin{array}{l}
\G_0'=\begin{pmatrix} - \G_{1a} +i(\wh\G_{a1}-\wh\G_b)\cr
i\wh\G_{a2} \cr \G_{0b}
\end{pmatrix}:\dom\tma \to H_0'\oplus \wh H_2\oplus\cH_b,\\
\G_1'=\begin{pmatrix} \G_{0a} + \tfrac 1 2(\wh\G_{a1}+\wh\G_b)\cr
-\G_{1b}
\end{pmatrix}:\dom\tma \to H_0'\oplus\cH_b.
\end{array}
\end{equation}
Then by \eqref{3.33} and \eqref{3.34} one has $ \G_j\{\pi y, \l
\pi y\}=\G_j' y, \; y\in\cN_\l, \; j \in \{0,1\}. $ Combining of
this equality with \eqref{4.12} and \eqref{2.19} gives
\begin{gather}\label{4.15}
P_{H_0'\oplus \cH_b}\G_0' Z_+(\l)=I_{\cH_1},\quad
\l\in\bC_+;\qquad \G_0' Z_-(\l)= I_{\cH_0}, \quad \l\in\bC_-,
\end{gather}
which in view of \eqref{4.13} can be written as
\begin{gather}
\begin{pmatrix}-\G_{1a}+i (\wh\G_{a1}- \wh\G_b) \cr \G_{0b}
\end{pmatrix} Z_+(\l)=\begin{pmatrix} I_{H_0'} & 0 \cr 0 & I_{\cH_b}
\end{pmatrix}, \;\; \l\in\bC_+ \label{4.15.1}\\
\begin{pmatrix}-\G_{1a}+i (\wh\G_{a1}- \wh\G_b) \cr i\wh\G_{a2}\cr \G_{0b}
\end{pmatrix} Z_-(\l)=\begin{pmatrix} I_{H_0'} & 0  & 0 \cr 0 & I_{\wh H_2}&0
\cr 0& 0 & I_{\cH_b}
\end{pmatrix}, \;\; \l\in\bC_- \label{4.15.2}
\end{gather}
It follows from \eqref{4.15.1} and \eqref{4.15.2} that
\begin{gather}
\G_{1a}Z_+(\l)=(-P_H,\; 0),\;\;\;\tfrac 1 2 (\wh\G_{a1}- \wh\G_b)
Z_+(\l)=(-\tfrac i 2 P_{\wh H_1},\; 0), \;\;\;\G_{0b} Z_+(\l)=
(0,\, I_{\cH_b})
\label{4.15.3}\\
\G_{1a}Z_-(\l)=(-P_H,\; 0,\;0),\;\;\;\tfrac 1 2 (\wh\G_{a1}-
\wh\G_b)
Z_-(\l)=(-\tfrac i 2 P_{\wh H_1},\; 0, \; 0)\label{4.15.4}\\
\wh\G_{a2}Z_-(\l)=(0,\; -iI_{\wh H_2}, \; 0),\quad \G_{0b}
Z_-(\l)= (0,\, 0,\, I_{ \cH_b})\label{4.15.5}
\end{gather}
Next assume that
\begin {gather}
Z_+(\l)=(r(\l),\,u_+(\l)):H_0'\oplus\cH_b\to
\cN_\l,\;\;\;\l\in\bC_+\label{4.15.6}\\
 Z_-(\l)=(r(\l),\, \omega_-(\l),\, \wt u_-(\l)):H_0'\oplus\wh H_2\oplus \cH_b\to
\cN_\l,\;\;\;\l\in\bC_-\label{4.15.7}
\end{gather}
are the block representations of $Z_\pm(\l)$ and let
\begin {gather}
v_0(\l):= (r(\l),\, 0):H_0'\oplus\wh H_2\to
\cN_\l,\;\;\;\l\in\bC_+\label{4.15.8}\\
v_0(\l):= (r(\l),\, \omega_-(\l)):H_0'\oplus \wh H_2\to
\cN_\l,\;\;\;\l\in\bC_-\label{4.15.9}
\end{gather}
Then in view of \eqref{3.31} the equalities \eqref{4.15.8} and
\eqref{4.15.9} define the operator $v_0(\l)\in
[H_0,\cN_\l],\;\l\in\CR$. Moreover, \eqref{4.15.6} induces the
operator $u_+(\l)\in [\cH_b,\cN_\l]$. Introduce also the operator
\begin {equation}\label{4.15.10}
 u_-(\l)=(\omega_-(\l),\, \wt u_-(\l)): \wh H_2\oplus \cH_b\to \cN_\l,
\;\;\;\l\in\bC_-.
\end{equation}
Now assume that $v_0(\cd,\l)\in \lo{H_0}, \; u_+(\cd,\l)\in
\lo{\cH_b}$ and $u_-(\cd,\l)\in \lo{\wh H_2\oplus\cH_b} $ are the
operator solutions of Eq. \eqref{3.2} corresponding to $v_0(\l),
\; u_+(\l)$ and $u_-(\l)$ in accordance with Lemma \ref{lem3.2}.
Then  combining of \eqref{4.15.6}--\eqref{4.15.10} with
\eqref{4.15.3}--\eqref{4.15.5} yields the relations
\eqref{4.8}--\eqref{4.11a} for $v_0(\cd,\l)$ and $u_\pm(\cd,\l)$.
Finally, by using uniqueness of the solution of the boundary value
problem \eqref{4.2}--\eqref{4.5} (with $f=0$) one proves
uniqueness of $v_0(\cd,\l)$ and $u_\pm(\cd,\l)$ in the same way as
in \cite[Proposition 4.3]{Mog13.1}.
\end{proof}
Let $\wt U$ be a $J$-unitary extension \eqref{3.17.5} of $U$ and
let $\G_{0a}$ be the mapping \eqref{3.23}. By using the solutions
$v_0(\cd,\l)$ and $u_\pm(\cd,\l)$ we define the operator functions
\begin {gather}
X_+(\l)=\begin{pmatrix}m_0(\l) & \Phi_+(\l) \cr \Psi_+(\l) & \dot
M_+(\l)
\end{pmatrix}:H_0\oplus\cH_b\to H_0\oplus (\wh H_2\oplus\cH_b), \quad
\l\in\bC_+\label{4.16}\\
X_-(\l)=\begin{pmatrix}m_0(\l) & \Phi_-(\l) \cr \Psi_-(\l) & \dot
M_-(\l)
\end{pmatrix}:H_0\oplus (\wh H_2\oplus\cH_b)\to H_0\oplus\cH_b , \quad
\l\in\bC_-,\label{4.17}
\end{gather}
where
\begin {gather}
m_0(\l)=(\G_{0a}+\wh\G_a)v_0(\l)+\tfrac i 2 P_{\wh H},\;\;\;\l\in\CR\label{4.18}\\
\Phi_\pm (\l)=(\G_{0a}+\wh\G_a)u_\pm(\l), \;\;\;\l\in\bC_\pm\label{4.19}\\
\Psi_+(\l)=(\wh\G_{a2}-\G_{1b})v_0(\l)+i P_{\wh H_2}, \quad\dot
M_+(\l)=(\wh\G_{a2}-\G_{1b})u_+(\l), \;\;\; \l\in\bC_+\label{4.20}\\
\Psi_-(\l)=-\G_{1b}v_0(\l), \quad \dot M_-(\l)=-\G_{1b}u_-(\l),
\;\;\; \l\in\bC_-\label{4.21}
\end{gather}
In the following proposition we specify a connection between the
operator functions $X_\pm(\cd)$ and the Weyl functions
$M_\pm(\cd)$ of the decomposing boundary triplet $\Pi_-$ for
$\Tma$.
\begin{proposition}\label{pr4.5}
Let $\Pi_-=\bta$ be the decomposing boundary triplet
\eqref{3.32}--\eqref{3.34} for $\Tma$ and let
\begin{gather}
M_+(\l)=\begin{pmatrix} M_1(\l )& M_{2+}(\l) \cr N_{1+}(\l) &
N_{2+}(\l) \cr M_{3+}(\l) & M_{4+}(\l)\end{pmatrix}:
H_0'\oplus\cH_b\to H_0'\oplus\wh H_2\oplus\cH_b,
\;\;\;\l\in\bC_+\label{4.22}\\
M_-(\l)=\begin{pmatrix} M_1(\l )& N_{1-}(\l) & M_{2-}(\l) \cr
M_{3-}(\l) & N_{2-}(\l) & M_{4-}(\l)
\end{pmatrix}: H_0'\oplus \wh H_2\oplus\cH_b\to H_0'\oplus \cH_b,
\;\;\;\l\in\bC_-\label{4.23}
\end{gather}
be the block  representations of the corresponding Weyl functions
$M_\pm(\cd)$. Then the entries of the operator matrices
\eqref{4.16} and \eqref{4.17} are holomorphic on their domains and
have the following block matrix representations:
\begin{gather}
m_0(\l)=\begin{pmatrix}M_1(\l) & 0 \cr N_{1+}(\l) & \tfrac i 2
I_{\wh H_2}\end{pmatrix}:\underbrace{H_0'\oplus\wh H_2}_{H_0}\to
\underbrace{H_0'\oplus\wh H_2}_{H_0}, \quad \l\in\bC_+\label{4.24}\\
m_0(\l)=\begin{pmatrix}M_1(\l) &  N_{1-}(\l) \cr  0   & -\tfrac i
2 I_{\wh H_2}\end{pmatrix}:\underbrace{H_0'\oplus\wh H_2}_{H_0}\to
\underbrace{H_0'\oplus\wh H_2}_{H_0}, \quad \l\in\bC_-\label{4.25}\\
\F_+(\l)=\begin{pmatrix}M_{2+}(\l)\cr
N_{2+}(\l)\end{pmatrix},\;\; \l\in\bC_+; \quad
\F_-(\l)=\begin{pmatrix}N_{1-}(\l) & M_{2-}(\l) \cr -i I &
0\end{pmatrix}, \quad \l\in\bC_-\label{4.26}\\
\Psi_+(\l)=\begin{pmatrix}N_{1+}(\l) & i I \cr   M_{3+}(\l) &
0\end{pmatrix}, \quad\l\in\bC_+; \quad  \Psi_-(\l)=(M_{3-}(\l),\,
N_{2-}(\l)),
\quad\l\in\bC_-\label{4.27}\\
\dot M_+(\l)=(N_{2+}(\l), \, M_{4+}(\l) )^\top, \;\;
\l\in\bC_+;\quad  \dot M_-(\l)=(N_{2-}(\l), \,M_{4-}(\l) ),\;\;
\l\in\bC_-.\label{4.28}
\end{gather}
Moreover, the following equalities hold
\begin {equation}\label{4.29}
m_0^*(\ov\l)=m_0(\l), \;\;  \F_+^*(\ov\l)=\Psi_-(\l), \;\;
\Psi_+^*(\ov\l)=\F_-(\l), \;\; \dot M_+^*(\ov\l)=\dot M_-(\l),
\;\; \l\in\bC_-.
\end{equation}
\end{proposition}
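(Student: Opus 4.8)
The plan is to derive everything from the single identity $\G_j\{\pi y,\l\pi y\}=\G_j'y$ for $y\in\cN_\l$ (established in the proof of Proposition \ref{pr4.4}) together with the relations $\g_\pm(\l)=\pi Z_\pm(\l)$ from \eqref{4.12}. Substituting these into the definitions \eqref{2.13} and \eqref{2.14} of the Weyl functions and using $\{\g_\pm(\l)h,\l\g_\pm(\l)h\}=\{\pi(Z_\pm(\l)h),\l\pi(Z_\pm(\l)h)\}$ gives
\begin{gather*}
M_+(\l)=(\G_1'-iP_2\G_0')Z_+(\l),\quad\l\in\bC_+;\qquad M_-(\l)=\G_1'Z_-(\l),\quad\l\in\bC_-,
\end{gather*}
where $P_2$ is the orthoprojector onto $\cH_2=\wh H_2$, so that $P_2\G_0'y=i\wh\G_{a2}y$ and the value of $M_+(\l)$ lands in $\cH_0=\cH_1\oplus\wh H_2$ with $\G_1'$-part in $\cH_1$ and $-iP_2\G_0'$-part in $\wh H_2$. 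Reading off the three output components ($H_0'$, $\wh H_2$, $\cH_b$) and using the column decompositions $Z_+(\l)=(r(\l),u_+(\l))$, $Z_-(\l)=(r(\l),\omega_-(\l),\wt u_-(\l))$ from \eqref{4.15.6}, \eqref{4.15.7}, I would obtain each block $M_1,N_{1\pm},M_{2\pm},N_{2\pm},M_{3\pm},M_{4\pm}$ of \eqref{4.22}, \eqref{4.23} as a fixed boundary map applied to one column of $Z_\pm(\l)$.

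First I would handle $\dot M_\pm,\Phi_\pm$ and $\Psi_-$, which carry no correction term in \eqref{4.19}--\eqref{4.21}. For each the defining boundary map, applied to the relevant column of $Z_\pm(\l)$, coincides with a Weyl block up to the term $\tfrac12(\wh\G_{a1}-\wh\G_b)$, which vanishes on the $\wh H_2$- and $\cH_b$-columns of $Z_\pm(\l)$ by \eqref{4.15.3}, \eqref{4.15.4}; the normalization \eqref{4.15.5} simultaneously supplies the entry $-iI$ appearing in $\F_-$. This yields \eqref{4.26}--\eqref{4.28}. For $m_0$ and $\Psi_+$, which do carry corrections, I would observe that on the $H_0'$-part $(\G_{0a}+\wh\G_a)$ differs from the Weyl combination $\G_1'-iP_2\G_0'$ exactly by $\tfrac12(\wh\G_{a1}-\wh\G_b)$, and that $\tfrac12(\wh\G_{a1}-\wh\G_b)r(\l)=-\tfrac i2P_{\wh H_1}$ by \eqref{4.15.3}; adding $\tfrac i2P_{\wh H}$ in \eqref{4.18} cancels this discrepancy and, since $v_0(\l)$ vanishes (resp.\ equals $\omega_-(\l)$) on the $\wh H_2$-column in $\bC_+$ (resp.\ $\bC_-$) by \eqref{4.15.8}, \eqref{4.15.9}, produces the diagonal block $\pm\tfrac i2I_{\wh H_2}$, giving \eqref{4.24}, \eqref{4.25}. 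The term $iP_{\wh H_2}$ in \eqref{4.20} produces in the same way the block $iI$ of $\Psi_+$. Holomorphy of every entry is then immediate, each being a Weyl block (holomorphic by Proposition \ref{pr2.8}) plus a constant operator.

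Finally, for the conjugation relations \eqref{4.29} I would invoke $M_+^*(\ov\l)=M_-(\l)$ from Proposition \ref{pr2.8}. In the block forms \eqref{4.22}, \eqref{4.23} this reads $M_1^*(\ov\l)=M_1(\l)$, $N_{1+}^*(\ov\l)=N_{1-}(\l)$, $M_{2+}^*(\ov\l)=M_{3-}(\l)$, $M_{3+}^*(\ov\l)=M_{2-}(\l)$, $N_{2+}^*(\ov\l)=N_{2-}(\l)$ and $M_{4+}^*(\ov\l)=M_{4-}(\l)$. Substituting these into the identifications \eqref{4.24}--\eqref{4.28} and using that the correction operators are built from self-adjoint projectors (so $\bigl(\tfrac i2I\bigr)^*=-\tfrac i2I$ flips the diagonal $\wh H_2$-block between the $\bC_+$ and $\bC_-$ forms) gives $m_0^*(\ov\l)=m_0(\l)$, $\F_+^*(\ov\l)=\Psi_-(\l)$, $\Psi_+^*(\ov\l)=\F_-(\l)$ and $\dot M_+^*(\ov\l)=\dot M_-(\l)$.

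The main obstacle is the bookkeeping in the second paragraph: keeping the threefold splitting $\cH_0=H_0'\oplus\wh H_2\oplus\cH_b$ (with $\wh H_2$ in the middle) consistent across the definitions of $X_\pm$, the Weyl blocks, and the corrections, and verifying that $\tfrac i2P_{\wh H}$ and $iP_{\wh H_2}$ are exactly what is required so that the off-diagonal $(1,2)$-blocks vanish while the diagonal $\wh H_2$-blocks come out as $\pm\tfrac i2I_{\wh H_2}$ and $iI$. Everything else is routine once this matching is set up.
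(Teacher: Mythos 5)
Your proof is correct and takes essentially the same route as the paper: both proofs start from the identities $M_+(\l)=(\G_1'-iP_2\G_0')Z_+(\l)$ and $M_-(\l)=\G_1'Z_-(\l)$ obtained from \eqref{4.12} and \eqref{2.13}, \eqref{2.14}, combine the resulting block identities with the normalizations \eqref{4.15.3}--\eqref{4.15.5} and the definitions \eqref{4.15.6}--\eqref{4.15.10}, \eqref{4.18}--\eqref{4.21} to extract \eqref{4.24}--\eqref{4.28}, and then derive holomorphy and \eqref{4.29} from $M_+^*(\ov\l)=M_-(\l)$. The only deviation is organizational (you treat the correction-free entries $\dot M_\pm,\F_\pm,\Psi_-$ before the corrected ones $m_0,\Psi_+$, whereas the paper computes all boundary values \eqref{4.30}--\eqref{4.39} first), which does not change the substance.
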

\begin{proof}
 Let $\G_0'$ and $\G_1'$ be given by \eqref{4.13} and let $Z_{\pm}(\cd)$ be
the same as in Proposition \ref{pr4.4}. Then by \eqref{4.12} and
\eqref{2.13}, \eqref{2.14} one has
\begin {equation*}
(\G_1'-i P_{\wh H_2}\G_0')Z_+(\l)= M_+(\l), \quad \l\in\bC_+;
\qquad \G_1' Z_-(\l)=M_-(\l), \quad \l\in\bC_-,
\end{equation*}
which in view of \eqref{4.22} and \eqref{4.23} can be represented
as
\begin{gather*}
\begin{pmatrix}\G_{0a}+\tfrac 1 2  (\wh\G_{a1}+ \wh\G_b)\cr
\wh\G_{a2} \cr -\G_{1b}\end{pmatrix} Z_+(\l)=\begin{pmatrix}
M_1(\l )& M_{2+}(\l)\cr N_{1+}(\l) & N_{2+}(\l) \cr M_{3+}(\l) &
M_{4+}(\l)
\end{pmatrix}, \;\; \l\in\bC_+ \\
\begin{pmatrix}\G_{0a}+\tfrac 1 2  (\wh\G_{a1}+ \wh\G_b) \cr
-\G_{1b}\end{pmatrix} Z_-(\l)=\begin{pmatrix} M_1(\l )& N_{1-}(\l)
& M_{2-}(\l) \cr M_{3-}(\l) & N_{2-}(\l) &
M_{4-}(\l)\end{pmatrix}, \;\; \l\in\bC_-.
\end{gather*}
This implies that
\begin{gather}
\G_{0a}Z_+(\l)= P_H( M_1(\l),\,  M_{2+}(\l)),\;\;\;\tfrac 1 2
(\wh\G_{a1}
+\wh\G_b) Z_+(\l)=P_{\wh H_1}(M_1(\l),\,M_{2+}(\l))\label{4.30}\\
\wh\G_{a2} Z_+(\l)= (N_{1+}(\l),\, N_{2+}(\l)),\;\;\;
\G_{1b}Z_+(\l)=(-M_{3+}(\l), \,  -M_{4+}(\l)) \label{4.31}\\
\G_{0a}Z_-(\l)= P_H( M_1(\l),\, N_{1-}(\l),\,  M_{2-}(\l))\label{4.32}\\
\tfrac 1 2 (\wh\G_{a1} +\wh\G_b) Z_-(\l)=P_{\wh H_1}(M_1(\l),\,
N_{1-}(\l),\,
M_{2-}(\l))\label{4.32a}\\
\G_{1b}Z_-(\l)=(-M_{3-}(\l), \, - N_{2-}(\l),\,  -M_{4-}(\l)).
\label{4.33}
\end{gather}
Summing up the second equality in \eqref{4.15.3} with the
equalities \eqref{4.30} and the first equality in \eqref{4.31}
one obtains
\begin {equation}\label{4.34}
(\G_{0a}+\wh\G_a)Z_+(\l)=(M_1(\l)+N_{1+}(\l) - \tfrac i 2 P_{\wh
H_1},\,M_{2+}(\l)+N_{2+}(\l) ):H_0'\oplus\cH_b\to H_0.
\end{equation}
Similarly, summing up  the second equality in \eqref{4.15.4}, the
first equality in \eqref{4.15.5} and the equalities \eqref{4.32}
and \eqref{4.32a} one gets
\begin {equation}\label{4.35}
(\G_{0a}+\wh\G_a)Z_-(\l)= (M_1(\l) - \tfrac i 2 P_{\wh H_1},\,
N_{1-}(\l) - i I_{\wh H_2},\, M_{2-}(\l) ).
\end{equation}
Combining \eqref{4.34} and \eqref{4.31} with the block
representation \eqref{4.15.6} of $Z_+(\l)$ and taking definition
\eqref{4.15.8}  of $v_0(\l)$ into account we obtain
\begin{gather}
(\G_{0a}+\wh\G_a)v_0(\l)=(M_1(\l)+N_{1+}(\l) - \tfrac i 2 P_{\wh
H_1})P_{H_0'},
\quad \l\in\bC_+\label{4.36}\\
(\G_{0a}+\wh\G_a)u_+(\l)=M_{2+}(\l)+N_{2+}(\l), \quad\l\in\bC_+\label{4.36.0}\\
\wh\G_{a2} v_0(\l)= N_{1+}(\l)P_{H_0'}, \quad \wh\G_{a2} u_+(\l)=
N_{2+}(\l),\quad \l\in\bC_+\label{4.36.1}\\
\G_{1b} v_0(\l)= -M_{3+}(\l)P_{H_0'}, \quad  \G_{1b} u_+(\l)=
-M_{4+}(\l),\quad \l\in\bC_+.\label{4.36.2}
\end{gather}
Moreover, \eqref{4.35} and \eqref{4.33} together with
\eqref{4.15.7}, \eqref{4.15.9} and \eqref{4.15.10} give
\begin{gather}
(\G_{0a}+\wh\G_a)v_0(\l)=(M_1(\l)- \tfrac i 2 P_{\wh
H_1})P_{H_0'}+(N_{1-}(\l)
-i I_{\wh H_2})P_{\wh H_2}, \quad \l\in\bC_-\label{4.37}\\
(\G_{0a}+\wh\G_a)u_-(\l)=(N_{1-}(\l)-i I_{\wh H_2},\, M_{2-}(\l)):
\wh
H_2\oplus\cH_b\to H_0, \quad \l\in\bC_-\label{4.38}\\
\G_{1b} v_0(\l)= - (M_{3-}(\l), \, N_{2-}(\l)), \quad  \G_{1b}
u_-(\l)= -(N_{2-}(\l), \, M_{4-}(\l)),\quad
\l\in\bC_-.\label{4.39}
\end{gather}
Combining of \eqref{4.18} with  \eqref{4.36} and \eqref{4.37}
yields
\begin{gather*}
m_0(\l)=(M_1(\l)+N_{1+}(\l) - \tfrac i 2 P_{\wh
H_1})P_{H_0'}+\tfrac i 2 P_{\wh
H}=(M_1(\l)+N_{1+}(\l))P_{H_0'}-\tfrac i 2 P_{\wh H_1}+\\
+\tfrac i 2 (P_{\wh H_1}+P_{\wh
H_2})=(M_1(\l)+N_{1+}(\l))P_{H_0'}+\tfrac i 2
P_{\wh H_2}, \;\;\;\l\in\bC_+\\
m_0(\l)=(M_1(\l)- \tfrac i 2 P_{\wh H_1})P_{H_0'}+(N_{1-}(\l)-i
I_{\wh
H_2})P_{\wh H_2}+\tfrac i 2P_{\wh H}=M_1(\l)P_{H_0'}+N_{1-}(\l)P_{\wh H_2}-\\
-\tfrac i 2 P_{\wh H_1}-i P_{\wh H_2}+\tfrac i 2 (P_{\wh
H_1}+P_{\wh H_2})= M_1(\l)P_{H_0'}+N_{1-}(\l)P_{\wh H_2}-\tfrac i
2 P_{\wh H_2},\;\;\;\l\in\bC_-,
\end{gather*}
which is equivalent to the block representations \eqref{4.24} and
\eqref{4.25} of $m_0(\l)$. Moreover, \eqref{4.19} together with
\eqref{4.36.0} and \eqref{4.38} gives the block representations
\eqref{4.26} of $\Phi_+(\l)$ and $\Phi_-(\l)$. Combining
\eqref{4.20} and \eqref{4.21} with \eqref{4.36.1}, \eqref{4.36.2}
and \eqref{4.39} we arrive at the block representations
\eqref{4.27} of $\Psi_\pm(\l)$ and \eqref{4.28} of $\dot M_\pm
(\l)$. Finally, holomorphy of the operator functions $m_0(\cd), \;
\Phi_\pm(\cd),\; \Psi_\pm(\cd)$ and $\dot M_\pm(\cd)$ and the
equalities \eqref{4.29} are implied by \eqref{4.22}--\eqref{4.28}
and the equality $M_+^*(\ov\l)=M_-(\l),\; \l\in\bC_- $.
\end{proof}
\begin{theorem}\label{th4.6}
Let in Case 1 $\pair$ be a boundary parameter \eqref{4.1},
\eqref{4.1a}. Then:

{\rm (1)} For each $\l\in\CR$ there exists a unique operator
solution $v_\tau(\cd,\l)\in\lo{H_0}$ of Eq. \eqref{3.2} satisfying
the boundary conditions
\begin {gather}
\G_{1a}v_\tau(\l)=-P_H, \quad \l\in\CR\label {4.41}\\
i(\wh\G_{a1}-\wh\G_b)v_\tau(\l)=P_{\wh H_1},\quad \l\in\CR\label {4.42}\\
\wh C_{02}(\l)(i\wh\G_{a2} v_\tau(\l)-P_{\wh H_2})+
C_{0b}(\l)\G_{0b}v_\tau(\l)+C_1(\l)\G_{1b}v_\tau(\l)=0,
\;\;\;\;\l\in\bC_+
\label {4.43}\\
\wh D_{02}(\l)(i\wh\G_{a2} v_\tau(\l)-P_{\wh H_2})+
D_{0b}(\l)\G_{0b}v_\tau(\l)+D_1(\l)\G_{1b}v_\tau(\l)=0,
\;\;\;\;\l\in\bC_- \label {4.44}
\end{gather}
(here  $v_\tau(\l)$ is the linear map from Lemma \ref{lem3.2}
corresponding to the solution $v_\tau(\cd,\l)$).

{\rm (2)}  $v_\tau(\cd,\l)$ is connected with the solutions
$v_0(\cd,\l)$ and
 $u_\pm(\cd,\l)$ from Proposition \ref{pr4.4} by
\begin {gather}
v_\tau(t,\l)=v_0(t,\l)-u_+(t,\l)(\tau_-^*(\ov\l)+\dot
M_{+}(\l))^{-1} \Psi_+(\l),
\quad \l\in\bC_+,\label{4.45}\\
v_\tau(t,\l)=v_0(t,\l)-u_-(t,\l)(\tau_-(\l)+ \dot
M_{-}(\l))^{-1}\Psi_-(\l), \quad \l\in\bC_-,\label{4.46}
\end{gather}
where $\Psi_\pm(\l)$ and $\dot M_\pm (\l)$ are the operator
functions defined in \eqref{4.20} and \eqref{4.21}.
\end{theorem}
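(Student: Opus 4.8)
The plan is to construct $v_\tau$ explicitly out of the solutions $v_0$ and $u_\pm$ from Proposition \ref{pr4.4} and to read off the boundary conditions by direct substitution, so that existence in (1) and the representation in (2) are obtained simultaneously; uniqueness I will reduce to the uniqueness of the homogeneous boundary value problem already available from Theorem \ref{th4.2}. I treat the half-plane $\bC_-$ first, since there the boundary data of $v_0$ and $u_-$ are complete. For $\l\in\bC_-$ I look for a solution of the form
\[
v_\tau(\cd,\l)=v_0(\cd,\l)-u_-(\cd,\l)X,\qquad X\in[H_0,\wh H_2\oplus\cH_b],
\]
with $X$ to be determined. Since $v_0$ and $u_-$ are operator solutions of \eqref{3.2}, so is $v_\tau$; moreover $\G_{1a}u_-(\l)=0$ and $(\wh\G_{a1}-\wh\G_b)u_-(\l)=0$ by \eqref{4.10}, so conditions \eqref{4.41} and \eqref{4.42} hold automatically, being inherited from $v_0$ via \eqref{4.8}.

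It then remains to choose $X$ so that \eqref{4.44} holds. Substituting the ansatz and using $i\wh\G_{a2}v_0(\l)=P_{\wh H_2}$, $\G_{0b}v_0(\l)=0$ (see \eqref{4.8}, \eqref{4.9}), $i\wh\G_{a2}u_-(\l)=\wt P_{\wh H_2}$, $\G_{0b}u_-(\l)=P_{\cH_b}$ (see \eqref{4.11a}), together with $\G_{1b}v_0(\l)=-\Psi_-(\l)$ and $\G_{1b}u_-(\l)=-\dot M_-(\l)$ from \eqref{4.21}, the three arguments of \eqref{4.44} become $i\wh\G_{a2}v_\tau-P_{\wh H_2}=-\wt P_{\wh H_2}X$, $\G_{0b}v_\tau=-P_{\cH_b}X$ and $\G_{1b}v_\tau=-\Psi_-(\l)+\dot M_-(\l)X$. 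Because $D_0(\l)=(\wh D_{02}(\l),D_{0b}(\l))$ is the block form of $D_0(\l)$ over $\wh H_2\oplus\cH_b$, one has $\wh D_{02}(\l)\wt P_{\wh H_2}+D_{0b}(\l)P_{\cH_b}=D_0(\l)$, and \eqref{4.44} collapses to the operator equation
\[
(D_0(\l)-D_1(\l)\dot M_-(\l))X=-D_1(\l)\Psi_-(\l),\qquad \l\in\bC_-.
\]
Granting invertibility of $D_0(\l)-D_1(\l)\dot M_-(\l)$, this gives $X=-(D_0(\l)-D_1(\l)\dot M_-(\l))^{-1}D_1(\l)\Psi_-(\l)$, and interpreting $(\tau_-(\l)+\dot M_-(\l))^{-1}$ in the sense of linear relations one checks that it coincides with the operator $-(D_0(\l)-D_1(\l)\dot M_-(\l))^{-1}D_1(\l)$; substituting back yields precisely \eqref{4.46}.

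The step I expect to be the main obstacle is exactly this invertibility of $D_0(\l)-D_1(\l)\dot M_-(\l)$ on $\bC_-$ (and of the corresponding operator on $\bC_+$). It does not follow from the ansatz but from the interplay between the defining properties of the Nevanlinna class $\wt R_-(\wh H_2\oplus\cH_b,\cH_b)$ to which $\tau$ belongs and the fact that $\dot M_-(\cd)$ is the Weyl function of the boundary triplet $\dot\Pi_-$ for $T^*$ (Proposition \ref{pr3.5}), hence a Nevanlinna function whose imaginary part has the sign controlled by \eqref{2.18a}. I would invoke the relevant invertibility statement for this class from \cite{Mog13.2}; it is the same fact that underlies the construction of the generalized resolvent $R_\tau$ in Theorem \ref{th4.2}, so the existence part can alternatively be deduced from there.

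Finally, for $\l\in\bC_+$ I would run the parallel computation with the ansatz $v_\tau=v_0-u_+Y$, $Y\in[H_0,\cH_b]$, which again preserves \eqref{4.41}, \eqref{4.42} by \eqref{4.10}; substituting into \eqref{4.43} and using the $\bC_+$ boundary data of $v_0,u_+$ together with the definitions \eqref{4.19}, \eqref{4.20} reduces \eqref{4.43} to a solvable operator equation, and rewriting it by means of the duality $\tau_+\leftrightarrow\tau_-^*$ of the class $\wt R_-$ and the adjoint relations $\Psi_+^*(\ov\l)=\F_-(\l)$, $\dot M_+^*(\ov\l)=\dot M_-(\l)$ from \eqref{4.29} produces \eqref{4.45} with the factor $(\tau_-^*(\ov\l)+\dot M_+(\l))^{-1}$. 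For uniqueness I note that if $v$ and $v'$ both satisfy \eqref{4.41}--\eqref{4.44} for a fixed $\l$, then $w:=v-v'$ is an operator solution of \eqref{3.2} each of whose vector solutions $w(\cd)h_0$, $h_0\in H_0$, satisfies the homogeneous boundary conditions \eqref{4.3}--\eqref{4.5} with $f=0$ (the inhomogeneous terms $-P_H$, $P_{\wh H_1}$ and $-P_{\wh H_2}$ cancel in the difference); by the uniqueness of the solution of the boundary value problem \eqref{4.2}--\eqref{4.5} established in Theorem \ref{th4.2}, each such solution vanishes, whence $w=0$.
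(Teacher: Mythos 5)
Your proposal is correct and follows essentially the same route as the paper: the same building blocks $v_0,u_\pm$ with the boundary data of Proposition \ref{pr4.4} and \eqref{4.19}--\eqref{4.21}, the same key invertibility $0\in\rho(\tau_-(\l)+\dot M_-(\l))$ obtained by identifying $\dot M_-(\cd)$ as the Weyl function of $\dot\Pi_-$ and citing \cite{Mog13.2}, and the same reduction of uniqueness to Theorem \ref{th4.2}. The only (presentational) difference is that you derive the correction coefficient by solving the operator equation $(D_0(\l)-D_1(\l)\dot M_-(\l))X=-D_1(\l)\Psi_-(\l)$ and then invoke the identity $-(\tau_-(\l)+\dot M_-(\l))^{-1}=(D_0(\l)-D_1(\l)\dot M_-(\l))^{-1}D_1(\l)$ (which the paper uses only later, in Theorem \ref{th5.4}), whereas the paper posits \eqref{4.45}, \eqref{4.46} and verifies the boundary conditions through the graph representations \eqref{4.47}--\eqref{4.49} of $\tau_\pm(\l)$ --- the same mechanism you use anyway in the $\bC_+$ half-plane.
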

\begin{proof}
It follows from \eqref{4.28} and \eqref{4.23} that $\dot
M_-(\l)=P_{\cH_b}M_-(\l)\up \wh H_2\oplus\cH_b, \;\l\in\bC_-$.
Therefore by Proposition \ref{pr2.10a}, (3) $\dot M_-(\l)$ is the
Weyl function of the boundary triplet $\dot\Pi_-$ for $T^*$  and
in view of \cite[Theorem 3.11]{Mog13.2} one has $0\in\rho
(\tau_-(\l)+ \dot M_{-}(\l)), \;\l\in\bC_-$. Hence for each
$\l\in\CR$ the equalities \eqref{4.45} and \eqref{4.46} correctly
define the solution $v_\tau(\cd,\l)\in\lo{H_0}$ of Eq. \eqref{3.2}
and to prove the theorem it is sufficient to show that such
$v_\tau(\cd,\l)$ is a unique solution of \eqref{3.2} belonging to
$\lo{H_0}$ and  satisfying \eqref{4.41}--\eqref{4.44}.

Combining \eqref{4.45} and \eqref{4.46} with \eqref{4.8} and
\eqref{4.10} one gets the equalities \eqref{4.41} and
\eqref{4.42}.  To prove \eqref{4.43} and \eqref{4.44} we let
\begin {equation}\label{4.46a}
T_+(\l)=(\tau_-^*(\ov\l)+\dot M_{+}(\l))^{-1},\;\l\in\bC_+,\;
\quad T_-(\l)=(\tau_-(\l) + \dot M_{-}(\l))^{-1},\;\;\l\in\bC_-,
\end{equation}
so that $\tau_-^*(\ov\l)$ and $\tau_-(\l)$ can be written as
\begin {gather}
\tau_-^*(\ov\l)=\{\{T_+(\l)h, (I-\dot M_{+}(\l)T_+(\l))h\}
:h\in\wh
H_2\oplus\cH_b\}, \label{4.47}\\
 \tau_-(\l)=\{\{T_-(\l)h, (I-\dot M_{-}(\l)T_-(\l))h\}:h\in\cH_b\}.\label{4.48}
\end{gather}
By using definition of the  class $\wt R_-(\cH_0,\cH_1)$ in
\cite{Mog13.2} one can easily show that
\begin {equation*}
\tau_+(\l)=\{\{-h_1+i P_2h_0, -P_1h_0\}:\{h_1,h_0\}\in
\tau_-^*(\ov\l)\}.
\end{equation*}
This and \eqref{4.47} yield
\begin {equation}\label{4.49}
\tau_+(\l)=\{\{(-T_+(\l)+i\wt P_{\wh H_2}-i\wt P_{\wh H_2} \dot
M_{+}(\l)T_+(\l))h, (-P_{\cH_b} + P_{\cH_b} \dot
M_{+}(\l)T_+(\l))h\}\},
\end{equation}
where $\wt P_{\wh H_2}$ and $P_{\cH_b}$ are the same  as in
Proposition \ref{pr4.4} and $h$ runs over $\wh H_2\oplus\cH_b.$

It follows from \eqref{4.20} that
\begin {gather}
\wh\G_{a2}v_0(\l)=\wt P_{\wh H_2}\Psi_+(\l)-i P_{\wh H_2}, \quad
\G_{1b}v_0(\l)=-P_{\cH_b}\Psi_+(\l), \quad \l\in\bC_+\label{4.50}\\
\wh\G_{a2}u_+(\l)=\wt P_{\wh H_2}\dot M_+(\l),
\quad\G_{1b}u_+(\l)=-P_{\cH_b}\dot M_+(\l), \quad
\l\in\bC_+.\label{4.51}
\end{gather}
Combining \eqref{4.45} with \eqref{4.50}, \eqref{4.51},
\eqref{4.11} and the last equality in \eqref{4.8}  one gets
\begin {gather}
(i\wh\G_{a2} v_\tau(\l)- P_{\wh
H_2})+\G_{0b}v_\tau(\l)=(-T_+(\l)+i\wt P_{\wh
H_2}-i\wt P_{\wh H_2}\dot M_+(\l)T_+(\l) )\Psi_+(\l),\nonumber\\
\G_{1b}v_\tau(\l)=(-P_{\cH_b}+P_{\cH_b}\dot
M_+(\l)T_+(\l))\Psi_+(\l), \quad\l\in\bC_+. \label{4.52}
\end{gather}
Moreover,  \eqref{4.46} together with  \eqref{4.9}, \eqref{4.11a},
\eqref{4.21} and the last equality in  \eqref{4.8}  yields
\begin {gather}
(i\wh\G_{a2} v_\tau(\l)- P_{\wh H_2})+\G_{0b}v_\tau(\l)=-\wt
P_{\wh H_2}T_-(\l) \Psi_-(\l)-P_{\cH_b} T_-(\l)
\Psi_-(\l)=-T_-(\l)\Psi_-(\l),
\nonumber\\
\G_{1b}v_\tau(\l)=-(I-\dot
M_-(\l)T_-(\l))\Psi_-(\l),\quad\l\in\bC_-.\label{4.53}
\end{gather}
Hence by \eqref{4.48} and  \eqref{4.49} one has
\begin {equation*}
\{(i\wh\G_{a2} v_\tau(\l)- P_{\wh H_2})h_0+\G_{0b}v_\tau(\l)h_0,
\G_{1b}v_\tau(\l)h_0 \}\in \tau_\pm(\l), \;\;\;h_0\in H_0, \;\;\;
\l\in\bC_\pm,
\end{equation*}
which in view of \eqref{2.2}  and  the block representations
\eqref{4.1a} yields \eqref{4.43} and \eqref{4.44}. Finally,
uniqueness of $v_\tau (\cd,\l)$ is implied by uniqueness of the
solution of the boundary value problem \eqref{4.2}--\eqref{4.5}
(see Theorem \ref{th4.2}).
\end{proof}
\subsection{Case 2}
\begin{definition}\label{def4.7}
A boundary parameter in \emph{Case 2} is a collection $\pair\in\wt
R_-(\wh H\oplus\wt\cH_b,\cH_b)$. A truncated boundary parameter is
a boundary parameter $\pair$ satisfying $\tau_-(\l)\subset
(\{0\}\oplus\wt\cH_b)\oplus \cH_b,\; \l\in\bC_-$.
\end{definition}
\noindent According to Subsection \ref{sub2.2} a boundary
parameter $\tau= \{\tau_+, \tau_-\}$ admits the representation
\begin {equation}\label{4.54}
\tau_+(\l)=\{(C_0(\l),C_1(\l));\cH_b\}, \;\;\l\in\bC_+; \quad
\tau_-(\l)=\{(D_0(\l),D_1(\l));\wh H\oplus\wt\cH_b\},
\;\;\l\in\bC_-
\end{equation}
with holomorphic operator functions $C_0(\l)(\in [\wh
H\oplus\wt\cH_b,\cH_b]), \; C_1(\l)(\in [\cH_b])$ and $D_0(\l)(\in
[\wh H\oplus\wt\cH_b]), \; D_1(\l)(\in [\cH_b,\wh H\oplus
\wt\cH_b])$. Moreover, a truncated boundary parameter $\pair$
admits the representation \eqref{4.54} with
\begin {equation}\label{4.54.1}
D_0(\l)=\begin{pmatrix} I_{\wh H}& 0 \cr  0 & \ov
D_0(\l)\end{pmatrix}:\wh H\oplus\wt\cH_b \to \wh
H\oplus\wt\cH_b,\; \;\;D_1(\l)=\begin{pmatrix}0 \cr \ov D_1(\l)
\end{pmatrix}:\cH_b\to \wh H\oplus\wt\cH_b.
\end{equation}

Let $\pair$ be a boundary parameter \eqref{4.54} and let
\begin {equation}\label{4.55}
\begin{array}{c}
C_0(\l)=(\wh C_0(\l),\wt C_{0b}(\l)):\wh H\oplus\wt\cH_b\to \cH_b, \\
D_0(\l)=(\wh D_{0}(\l),\wt D_{0b}(\l)):\wh H\oplus\wt\cH_b\to(\wh
H\oplus \wt\cH_b)
\end{array}
\end{equation}
be the block representations of $C_0(\l)$ and $D_0(\l)$. For a
given function $f\in\lI$ consider the following boundary value
problem:
\begin{gather}
J y'-B(t)y=\l \D(t)y+\D(t)f(t), \quad t\in\cI,\label{4.59}\\
\G_{1a}y=0, \quad \l\in\CR,
\label{4.60}\\
i \wh C_{0}(\l)\wh \G_a y+  \wt C_{0b}(\l)\wt\G_{0b}y
+C_1(\l)\G_{1b}y=0, \quad
\l\in\bC_+,\label{4.61.1}\\
i \wh D_{0}(\l)\wh \G_a y+  \wt D_{0b}(\l)\wt\G_{0b}y
+D_1(\l)\G_{1b}y=0, \quad \l\in\bC_-.\label{4.61.2}
\end{gather}
One proves the following theorem in the same way as Theorem
\ref{th4.2}.
\begin{theorem}\label{th4.8}
Let in Case 2 $T$ be a symmetric relation in $\LI$ defined by
\eqref{3.43}. Then statements of Theorem \ref{th4.2} hold with a
boundary parameter $\pair$ of the form \eqref{4.54} and the
boundary value problem \eqref{4.59} - \eqref{4.61.2} in place of
\eqref{4.2} - \eqref{4.5}.
\end{theorem}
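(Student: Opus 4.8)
The plan is to follow verbatim the strategy used for Theorem \ref{th4.2}, replacing every \Ca{1} ingredient by its \Ca{2} counterpart. The whole argument reduces to producing the appropriate boundary triplet for $T^*$ and then quoting the abstract parametrization of generalized resolvents from \cite[Theorem 3.11]{Mog13.2}. First I would take the boundary triplet $\dot\Pi_-=\{\dot\cH_0\oplus\cH_b,\dot\G_0,\dot\G_1\}$ for $T^*$ furnished by Proposition \ref{pr3.6}, where $\dot\cH_0=\wh H\oplus\wt\cH_b$ and the mappings are $\dot\G_0\{\wt y,\wt f\}=\{i\wh\G_a y,\wt\G_{0b}y\}$ and $\dot\G_1\{\wt y,\wt f\}=-\G_{1b}y$ (see \eqref{3.45}). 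By Definition \ref{def4.7} a boundary parameter in \Ca{2} is a collection $\pair\in\wt R_-(\wh H\oplus\wt\cH_b,\cH_b)=\wt R_-(\dot\cH_0,\cH_b)$, which is exactly the class of parameters compatible with this triplet, since $\cH_b=\ran\dot\G_1$ plays the role of the smaller space in the construction of Subsection \ref{sub2.2}.

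Next I would apply \cite[Theorem 3.11]{Mog13.2} to $\dot\Pi_-$. That theorem gives a bijection $\tau\mapsto R_\tau(\cd)$ between boundary parameters of the class $\wt R_-(\dot\cH_0,\cH_b)$ and the generalized resolvents of $T$, the resolvent being realized through the exit space extension $\Tt$ associated with $\tau$, and it characterizes $y=R_\tau(\l)\wt f$ as the unique element with $\{\wh y,\l\wh y+\wt f\}\in T^*$ obeying the abstract boundary condition $\{\dot\G_0\wh y,\dot\G_1\wh y\}\in\tau_\pm(\l)$ on $\bC_\pm$. The only remaining work is to unwind this abstract condition. Writing $D_0(\l)=(\wh D_0(\l),\wt D_{0b}(\l))$ as in \eqref{4.55} and substituting the explicit mappings \eqref{3.45}, the membership $\{\dot\G_0\wh y,\dot\G_1\wh y\}\in\tau_-(\l)$ translates, via \eqref{2.2}, into the Nevanlinna condition \eqref{4.61.2}; the analogous computation with $\tau_+$ yields \eqref{4.61.1}, while the constraint $\G_{1a}y=0$ of \eqref{4.60} is already encoded in the definition \eqref{3.44} of $T^*$. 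Thus the $\lI$-solution supplied by Theorem 3.11 is exactly the solution $y_f(\cd,\l)$ of the boundary value problem \eqref{4.59}--\eqref{4.61.2}, the equality $R(\l)\wt f=\pi(y_f(\cd,\l))$ defines a generalized resolvent, and the converse statement follows from the injectivity half of the correspondence.

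The part demanding care is this last translation step: one must track the sign introduced by $\dot\G_1=-\G_{1b}$ and the factor $i$ sitting in the first component of $\dot\G_0$, and check that the equivalence-class description of pairs in $\wt R_-(\dot\cH_0,\cH_b)$ is consistent with the normalized block form \eqref{4.55}. No new analytic obstacle appears here, however, because the verification is formally identical to the one carried out in \Ca{1}, with $\wh H$ in place of $\wh H_2$, $\wt\cH_b$ in place of the first $\cH_b$ inside $\dot\cH_0$, and $\wh\G_a,\wt\G_{0b}$ in place of $\wh\G_{a2},\G_{0b}$; hence the argument closes exactly as for Theorem \ref{th4.2}.
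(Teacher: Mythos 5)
Your proposal is correct and follows essentially the same route as the paper: the paper proves Theorem \ref{th4.8} exactly by taking the boundary triplet $\dot\Pi_-$ for $T^*$ from Proposition \ref{pr3.6} and applying \cite[Theorem 3.11]{Mog13.2}, just as was done with Proposition \ref{pr3.5} in the proof of Theorem \ref{th4.2}. Your additional unwinding of the abstract boundary condition into \eqref{4.60}--\eqref{4.61.2}, including the bookkeeping for the sign in $\dot\G_1=-\G_{1b}$ and the factor $i$ in $\dot\G_0$, is precisely the routine verification the paper leaves implicit.
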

\begin{remark}\label{rem4.9}
Let $\tau_0=\{\tau_{+},\tau_{-}\}$ be a boundary parameter
\eqref{4.54}  with
\begin {equation}\label{4.61a}
C_0(\l) \equiv P_{\cH_b}(\in [\wh H\oplus\wt\cH_b,\cH_b]), \quad
C_1(\l)\equiv 0, \;\;\;
  D_0(\l)\equiv I_{\wh H\oplus\wt\cH_b},\quad D_1(\l)\equiv 0
\end{equation}
and let $A_0$ be the symmetric relation \eqref{3.46}. Then
$R_{\tau_0}(\l)$ is of the form \eqref{4.7}.
\end{remark}
\begin{proposition}\label{pr4.10}
Let in Case 2 $P_H$ (resp. $P_{\wh H}$)  be the orthoprojector in
$H_0$ onto $H$ (resp. $\wh H$)  and let $\wt P_{\wh H}$ (resp.
$P_{\wt\cH_b}$) be the orthoprojector in $\wh H\oplus\wt\cH_b$
onto $\wh H$ (resp. $\wt\cH_b$). Then:

{\rm (1)} For any $\l\in\CR$ there exists a unique operator
solution $v_0(\cd,\l)\in\lo{H_0}$ of Eq. \eqref{3.2} satisfying
\begin{gather}
\G_{1a} v_0(\l)=-P_H, \;\;\;  \l\in\CR \label{4.62}\\
\G_{0b}v_0(\l)=0, \;\;\l\in\bC_+;\qquad   i\wh\G_{a}v_0(\l)=P_{\wh
H}, \quad \wt\G_{0b}v_0(\l)=0,\;\; \l\in\bC_-\label{4.63}
\end{gather}

{\rm(2)} For any $\l\in\bC_+ \; (\l\in\bC_-)$ there exists a
unique operator solution $u_+(\cd,\l)\in\lo{\cH_b}$ (resp.
$u_-(\cd,\l)\in\lo{\wh H\oplus\wt\cH_b}$) of Eq. \eqref{3.2}
satisfying
\begin{gather}
\G_{1a}u_\pm(\l)=0, \quad \l\in\bC_\pm \label{4.64}\\
\G_{0b}u_+(\l)=I_{\cH_b},\;\; \l\in\bC_+;\qquad i
\wh\G_{a}u_-(\l)=\wt P_{\wh H},\quad
\wt\G_{0b}u_-(\l)=P_{\wt\cH_b},\;\;\; \l\in\bC_-. \label{4.65}
\end{gather}
\end{proposition}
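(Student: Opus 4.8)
The plan is to mirror the proof of Proposition \ref{pr4.4}, replacing the Case~1 decomposing boundary triplet by its Case~2 counterpart. Concretely, I would fix a $J$-unitary extension $\wt U$ of $U$, the mapping $\G_{0a}$ from \eqref{3.23}, and the decomposing boundary triplet $\Pi_-=\bta$ for $\Tma$ given by \eqref{3.36}--\eqref{3.38}, with $\g$-fields $\g_\pm(\cd)$. Since $\pi$ isomorphically maps $\cN_\l$ onto $\gN_\l(\Tmi)$, for each $\l\in\bC_+$ (resp. $\l\in\bC_-$) there is an isomorphism $Z_+(\l):\cH_1=H\oplus\cH_b\to\cN_\l$ (resp. $Z_-(\l):\cH_0=H\oplus\wh H\oplus\wt\cH_b\to\cN_\l$) with $\g_\pm(\l)=\pi Z_\pm(\l)$. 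Introducing the pointwise mappings
\begin{equation*}
\G_0'=(-\G_{1a},\, i\wh\G_a,\,\wt\G_{0b})^\top:\dom\tma\to H\oplus\wh H\oplus\wt\cH_b,\qquad \G_1'=(\G_{0a},\, -\G_{1b})^\top:\dom\tma\to H\oplus\cH_b,
\end{equation*}
one has $\G_j\{\pi y,\l\pi y\}=\G_j' y$ for $y\in\cN_\l$, so substituting into \eqref{2.19} yields $P_1\G_0' Z_+(\l)=I_{\cH_1}$ for $\l\in\bC_+$ and $\G_0' Z_-(\l)=I_{\cH_0}$ for $\l\in\bC_-$.

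Next I would read off the block identities. Writing $Z_+(\l)=(r(\l),\,u_+(\l))$ on $H\oplus\cH_b$ and $Z_-(\l)=(r(\l),\,\omega_-(\l),\,\wt u_-(\l))$ on $H\oplus\wh H\oplus\wt\cH_b$, and using $P_{\cH_b}\wt\G_{0b}=\G_{0b}$ (which follows from $\wt\G_{0b}=\G_{0b}+\wh\G_b$ together with $\wt\cH_b=\cH_b\oplus\wh\cH_b$), the two identities above give $\G_{1a}r(\l)=-I_H$, $\G_{0b}u_+(\l)=I_{\cH_b}$, $i\wh\G_a\omega_-(\l)=I_{\wh H}$ and $\wt\G_{0b}\wt u_-(\l)=I_{\wt\cH_b}$, together with the vanishing of all the remaining entries. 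I would then set $v_0(\l)=(r(\l),0):H_0\to\cN_\l$ for $\l\in\bC_+$ and $v_0(\l)=(r(\l),\omega_-(\l)):H_0\to\cN_\l$ for $\l\in\bC_-$, put $u_-(\l)=(\omega_-(\l),\,\wt u_-(\l))$, and let $v_0(\cd,\l)\in\lo{H_0}$ and $u_\pm(\cd,\l)$ be the operator solutions attached to these mappings by Lemma \ref{lem3.2}. The block identities then translate directly into the boundary conditions \eqref{4.62}--\eqref{4.65}; in particular the $\bC_+$-conditions involve only $\G_{1a}$ and $\G_{0b}$ because $Z_+$ sees only $\cH_1$, whereas for $\l\in\bC_-$ the full $\G_0'$ additionally pins down the $\wh H$- and $\wt\cH_b$-values, giving $i\wh\G_a v_0(\l)=P_{\wh H}$, $\wt\G_{0b}v_0(\l)=0$, $i\wh\G_a u_-(\l)=\wt P_{\wh H}$ and $\wt\G_{0b}u_-(\l)=P_{\wt\cH_b}$.

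Finally, uniqueness of $v_0(\cd,\l)$ and $u_\pm(\cd,\l)$ follows from the uniqueness of the solution of the boundary value problem \eqref{4.59}--\eqref{4.61.2} granted by Theorem \ref{th4.8}, argued exactly as in \cite[Proposition 4.3]{Mog13.1}.

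I expect the main obstacle to be bookkeeping rather than anything conceptual: correctly identifying the orthoprojector $P_1$ onto $\cH_1$ inside $\cH_0$, verifying the identity $P_{\cH_b}\wt\G_{0b}=\G_{0b}$, and exploiting the asymmetry $\dim\cH_1<\dim\cH_0$ by padding the $\bC_+$-solution $v_0$ with a zero $\wh H$-column so that it becomes an $[H_0,\cN_\l]$-valued map. Once these identifications are in place the verification of \eqref{4.62}--\eqref{4.65} is a routine transcription of the Case~1 computation.
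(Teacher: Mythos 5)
Your proposal is correct and follows essentially the same route as the paper: the paper's own proof constructs the isomorphisms $Z_\pm(\l)$ via the $\g$-fields of the Case~2 decomposing boundary triplet \eqref{3.36}--\eqref{3.38}, reads off exactly the block identities you derive (including the reduction $P_{\cH_b}\wt\G_{0b}=\G_{0b}$ implicit in passing to $P_1\G_0'$), defines $v_0(\l)$ and $u_\pm(\l)$ by the same zero-padding and block splittings \eqref{4.78}--\eqref{4.79}, and concludes uniqueness from the boundary value problem \eqref{4.59}--\eqref{4.61.2} exactly as you do. Your write-up is in fact more detailed than the paper's, which compresses most of these steps into a reference to the proof of Proposition \ref{pr4.4}.
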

\begin{proof}
let $\Pi_-$ be the decomposing boundary triplet for  $\Tma$
constructed with the aid of some $J$-unitary extension $\wt U$ of
$U$ (see Proposition \ref{pr3.3}). By using $\g$-fields of this
triplet one proves in the same way as in Proposition \ref{pr4.4}
the existence of isomorphisms $Z_+(\l):\cH_1\to \cN_\l,
\;\l\in\bC_+,$ and $Z_-(\l):\cH_0\to \cN_\l, \; \l\in\bC_-,$ such
that
\begin{gather}
\begin{pmatrix}-\G_{1a} \cr \G_{0b}\end{pmatrix} Z_+(\l)=\begin{pmatrix}
I_{H} & 0 \cr 0 & I_{\cH_b}\end{pmatrix},\quad \l\in\bC_+\label{4.67}\\
\begin{pmatrix}-\G_{1a}\cr i \wh\G_a \cr \wt\G_{0b}\end{pmatrix} Z_-(\l)=
\begin{pmatrix} I_{H} & 0 & 0 \cr
0 & I_{\wh H} & 0 \cr 0 & 0 & I_{\wt\cH_b}\end{pmatrix}, \;\;\;\;
\l\in\bC_- \label{4.68}
\end{gather}
It follows from \eqref{4.67} and \eqref{4.68} that
\begin{gather}
\G_{1a}Z_+(\l)=(-I_H,\; 0),\;\;\;\;\;\G_{0b} Z_+(\l)= (0,\,
I_{\cH_b})
\label{4.69}\\
\G_{1a}Z_-(\l)=(-I_H,\; 0,\;0),\;\;\;\; i\wh\G_{a}Z_-(\l)=(0,\,
I_{\wh H}, \, 0),\;\;\;\; \wt\G_{0b} Z_-(\l)= (0,\, 0,\, I_{
\wt\cH_b}).\label{4.70}
\end{gather}
Assume that
\begin {gather}
Z_+(\l)=(r(\l), \,u_+(\l)):H\oplus\cH_b\to\cN_\l,\;\;\;\l\in\bC_+
\label{4.76}\\
Z_-(\l)=(r(\l),\,\omega_-(\l),\,  \wt u_-(\l)):H\oplus\wh H\oplus
\wt\cH_b\to \cN_\l,\;\;\;\l\in\bC_-\label{4.77}
\end{gather}
are the block representations of $Z_\pm (\l)$ (see  \eqref{3.36} )
and let $v_0(\l):H_0\to\cN_\l, \; \l\in\CR,$ be the operator given
by
\begin {gather}
v_0(\l):= (r(\l),\; 0):H\oplus\wh H\to \cN_\l,\;\;\;\l\in\bC_+\label{4.78}\\
v_0(\l):= (r(\l),\;\omega_-(\l) ):H\oplus\wh H\to
\cN_\l,\;\;\;\l\in\bC_-. \label{4.78.1}
\end{gather}
It is clear that \eqref{4.76} induces the operator $u_+(\l)\in
[\cH_b,\cN_\l]$. Introduce also the operator
\begin {equation}\label{4.79}
 u_-(\l)=(\omega_-(\l),\, \wt u_-(\l)): \wh H\oplus \wt\cH_b\to \cN_\l,
\;\;\;\l\in\bC_-.
\end{equation}
Let $v_0(\cd,\l)\in \lo{H_0}, \; u_+(\cd,\l)\in \lo{\cH_b}$ and
$u_-(\cd,\l)\in \lo{\wh H\oplus\wt\cH_b} $ be the operator
solutions of Eq. \eqref{3.2} corresponding to $v_0(\l), \;
u_+(\l)$ and $u_-(\l)$ respectively (see Lemma \ref{lem3.2}). Then
combining of \eqref{4.76}--\eqref{4.79} with \eqref{4.69} and
\eqref{4.70} gives the relations \eqref{4.62}--\eqref{4.65}.
Finally, uniqueness of $v_0(\cd,\l)$ and $u_\pm(\cd,\l)$ follows
from uniqueness of the solution of the boundary value problem
\eqref{4.59}--\eqref{4.61.2}.
\end{proof}
Let $\wt U$ be a $J$-unitary extension \eqref{3.17.5} of $U$ and
let $\G_{0a}$ be the mapping \eqref{3.23}.  Introduce the operator
functions
\begin {gather}
X_+(\l)=\begin{pmatrix}m_0(\l) & \Phi_+(\l) \cr \Psi_+(\l) & \dot
M_+(\l)
\end{pmatrix}:H_0\oplus\cH_b\to H_0\oplus (\wh H\oplus\wt\cH_b), \quad
\l\in\bC_+\label{4.80}\\
X_-(\l)=\begin{pmatrix}m_0(\l) & \Phi_-(\l) \cr \Psi_-(\l) & \dot
M_-(\l)
\end{pmatrix}:H_0\oplus (\wh H\oplus\wt\cH_b)\to H_0\oplus\cH_b , \quad
\l\in\bC_-,\label{4.81}
\end{gather}
with the entries defined in terms of the solutions $v_0(\cd,\l)$
and $u_\pm(\cd,\l)$ as follows: $m_0(\l),\; \Phi_\pm(\l)$,  $
\Psi_-(\l)$ and $ \dot M_-(\l)$ are given by \eqref{4.18},
\eqref{4.19} and  \eqref{4.21}, while
\begin {equation}\label{4.81a}
\Psi_+(\l)=(\wh\G_{a}-\G_{1b}-i \wh\G_b)v_0(\l)+i P_{\wh H},
\quad\dot M_+(\l)=(\wh\G_{a}-\G_{1b}-i \wh\G_b)u_+(\l), \;\;\;
\l\in\bC_+.
\end{equation}
\begin{proposition}\label{pr4.11}
Let $\Pi_-=\bta$ be the decomposing boundary triplet
\eqref{3.36}--\eqref{3.38} for $\Tma$ and let
\begin{gather}
M_+(\l)=\begin{pmatrix} M_1(\l )& M_{2+}(\l) \cr N_{1+}(\l) &
N_{2+}(\l) \cr M_{3+}(\l) & M_{4+}(\l)\end{pmatrix}:
H\oplus\cH_b\to H\oplus\wh
H\oplus\wt\cH_b,\;\;\;\l\in\bC_+\label{4.82}\\
M_-(\l)=\begin{pmatrix} M_1(\l )& N_{1-}(\l) & M_{2-}(\l) \cr
M_{3-}(\l) & N_{2-}(\l) & M_{4-}(\l)
\end{pmatrix}: H\oplus \wh H\oplus\wt\cH_b\to H\oplus \cH_b,
\;\;\;\l\in\bC_-\label{4.83}
\end{gather}
be the block matrix representations of the corresponding Weyl
functions.  Then the entries of the operator matrices \eqref{4.80}
and \eqref{4.81} are holomorphic on their  domains and  satisfy
\begin{gather}
m_0(\l)=\begin{pmatrix}M_1(\l) & 0 \cr N_{1+}(\l) & \tfrac i 2
I_{\wh H}\end{pmatrix}:\underbrace{H\oplus\wh H}_{H_0}\to
\underbrace{H\oplus\wh H}_{H_0}, \quad \l\in\bC_+\label{4.84}\\
m_0(\l)=\begin{pmatrix}M_1(\l) &  N_{1-}(\l) \cr  0   & -\tfrac i
2 I_{\wh H}\end{pmatrix}:\underbrace{H\oplus\wh H}_{H_0}\to
\underbrace{H\oplus\wh H}_{H_0}, \quad \l\in\bC_-\label{4.85}
\end{gather}
and \eqref{4.26}--\eqref{4.28}. Moreover, the equalities
\eqref{4.29} are valid.
\end{proposition}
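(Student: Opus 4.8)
The plan is to repeat the proof of Proposition \ref{pr4.5} almost verbatim, adapting the block bookkeeping to the Case 2 decomposition $\cH_0=H\oplus\wh H\oplus\wt\cH_b$, $\cH_1=H\oplus\cH_b$ and $\cH_2=\wh H\oplus\wh\cH_b$. First I would introduce the linear mappings $\G_0',\G_1'$ on $\dom\tma$ mimicking \eqref{4.13}, namely $\G_0'=(-\G_{1a},\,i\wh\G_a,\,\wt\G_{0b})^\top$ and $\G_1'=(\G_{0a},\,-\G_{1b})^\top$, so that by \eqref{3.37}, \eqref{3.38} one has $\G_j\{\pi y,\l\pi y\}=\G_j'y$ for $y\in\cN_\l$, $j\in\{0,1\}$. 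Combining this with the identification $\g_\pm(\l)=\pi Z_\pm(\l)$ and the defining formulas \eqref{2.13}, \eqref{2.14} for the Weyl functions gives $(\G_1'-iP_2\G_0')Z_+(\l)=M_+(\l)$, $\l\in\bC_+$, and $\G_1'Z_-(\l)=M_-(\l)$, $\l\in\bC_-$, where $Z_\pm(\cd)$ are the isomorphisms from the proof of Proposition \ref{pr4.10}.

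Next I would compute $P_2\G_0'$ explicitly. Since $\cH_2=\wh H\oplus\wh\cH_b$ and the $\wh\cH_b$-component of $\wt\G_{0b}=\G_{0b}+\wh\G_b$ is $\wh\G_b$, one gets $P_2\G_0'=(i\wh\G_a,\,\wh\G_b)^\top$, whence $\G_1'-iP_2\G_0'$ has components $(\G_{0a},\,\wh\G_a,\,-\G_{1b},\,-i\wh\G_b)$ relative to $H\oplus\wh H\oplus\cH_b\oplus\wh\cH_b$. Writing out $(\G_1'-iP_2\G_0')Z_+(\l)=M_+(\l)$ and $\G_1'Z_-(\l)=M_-(\l)$ in the block forms \eqref{4.82}, \eqref{4.83} then yields the component identities for $\G_{0a}Z_\pm(\l)$, $\wh\G_aZ_\pm(\l)$, $\G_{1b}Z_\pm(\l)$ and $\wh\G_bZ_\pm(\l)$ in terms of the blocks $M_1, M_{j\pm}, N_{j\pm}$. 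Inserting the block representations \eqref{4.76}--\eqref{4.79} of $Z_\pm(\l)$ and the induced definitions of $v_0(\cd,\l)$, $u_\pm(\cd,\l)$, together with the relations \eqref{4.69}, \eqref{4.70} already established in Proposition \ref{pr4.10}, I would then substitute into the defining formulas \eqref{4.18}, \eqref{4.19}, \eqref{4.21} and \eqref{4.81a}. The combination $\wh\G_a-\G_{1b}-i\wh\G_b$ appearing in $\Psi_+$ and $\dot M_+$ is precisely the $\wh H\oplus\cH_b\oplus\wh\cH_b$-restriction of $\G_1'-iP_2\G_0'$, so these formulas produce \eqref{4.84}, \eqref{4.85} for $m_0$ and \eqref{4.26}--\eqref{4.28} for $\Phi_\pm,\Psi_\pm,\dot M_\pm$ after collecting the $\tfrac i2 P_{\wh H}$ (resp. $iP_{\wh H}$) correction terms exactly as in the computation following \eqref{4.35} in the proof of Proposition \ref{pr4.5}.

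I expect the main obstacle to be the careful bookkeeping of the $P_2\G_0'$ contribution, specifically the $\wh\G_b$ (projection onto $\wh\cH_b$) term, which has no analog in Case 1 (cf. \eqref{4.20}) and is the reason $\Psi_+,\dot M_+$ in \eqref{4.81a} carry the extra summand $-i\wh\G_b$. One must keep $\wt\cH_b=\cH_b\oplus\wh\cH_b$ glued together when matching against the Weyl-function blocks $N_{j-},M_{j-}$ of \eqref{4.83}, but split apart when evaluating $\G_1'-iP_2\G_0'$; a single sign or projector slip here would corrupt the off-diagonal blocks $\F_-$ and $\Psi_+$. Finally, holomorphy of all entries on their domains follows from holomorphy of $M_\pm(\cd)$, and the symmetry relations \eqref{4.29} follow from $M_+^*(\ov\l)=M_-(\l)$, $\l\in\bC_-$, exactly as at the end of the proof of Proposition \ref{pr4.5}.
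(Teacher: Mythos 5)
Your proposal is correct and matches the paper's own proof in essentially every step: the paper likewise takes $Z_\pm(\cd)$ from Proposition \ref{pr4.10}, establishes the identities $(\G_{0a},\,\wh\G_a,\,-\G_{1b}-i\wh\G_b)^\top Z_+(\l)=M_+(\l)$ and $(\G_{0a},\,-\G_{1b})^\top Z_-(\l)=M_-(\l)$ by the reasoning of Proposition \ref{pr4.5} (your explicit computation $P_2\G_0'=(i\wh\G_a,\,\wh\G_b)^\top$, forced by the $\wh\cH_b$-component of $\wt\G_{0b}=\G_{0b}+\wh\G_b$, is exactly what produces the extra $-i\wh\G_b$ summand in \eqref{4.81a}), and then performs the same block bookkeeping via \eqref{4.76}--\eqref{4.79} and \eqref{4.70} together with the definitions \eqref{4.18}, \eqref{4.19}, \eqref{4.21}, \eqref{4.81a}. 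Holomorphy of the entries and the symmetry relations \eqref{4.29} are obtained in the paper exactly as you indicate, from $M_+^*(\ov\l)=M_-(\l)$ as at the end of the proof of Proposition \ref{pr4.5}.
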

\begin{proof}
 Let $Z_\pm(\cd)$ be the same as in Proposition \ref{pr4.10}. By using the
reasonings similar to those in the proof of Proposition
\ref{pr4.5} one proves the equalities
\begin{gather}
\begin{pmatrix}\G_{0a} \cr \wh\G_a \cr  -\G_{1b}-i\wh \G_b\end{pmatrix}
 Z_+(\l)=\begin{pmatrix} M_1(\l )&
M_{2+}(\l)\cr N_{1+}(\l) & N_{2+}(\l) \cr M_{3+}(\l) & M_{4+}(\l)
\end{pmatrix}, \;\;\; \l\in\bC_+,\label{4.86}\\
\begin{pmatrix}\G_{0a}\cr -\G_{1b}\end{pmatrix} Z_-(\l)=\begin{pmatrix}
M_1(\l )& N_{1-}(\l) & M_{2-}(\l) \cr M_{3-}(\l) & N_{2-}(\l) &
M_{4-}(\l)\end{pmatrix}, \;\;\;\; \l\in\bC_-. \label{4.87}
\end{gather}
It follows from \eqref{4.86} that
\begin{gather*}
(\G_{0a}+\wh\G_a)Z_+(\l)=(M_1(\l)+N_{1+}(\l),\,M_{2+}(\l)+N_{2+}(\l) ),\\
(\wh\G_a-\G_{1b}-i\wh\G_b)Z_+(\l)=(N_{1+}(\l)+M_{3+}(\l),\,N_{2+}(\l)+
M_{4+}(\l) ),\quad \l\in\bC_+.
\end{gather*}
Moreover, \eqref{4.87} and the second equality in \eqref{4.70}
yield
\begin{gather*}
(\G_{0a}+\wh\G_a)Z_-(\l)= (M_1(\l),\, N_{1-}(\l) - i I_{\wh H},\,
M_{2-}(\l)
),\\
-\G_{1b}Z_-(\l)=(M_{3-}(\l), \, N_{2-}(\l),\,  M_{4-}(\l)),\quad
\l\in\bC_-.
\end{gather*}
Combining these equalities  with the block representations
\eqref{4.76} and \eqref{4.77} of $Z_\pm(\l)$ and taking
\eqref{4.78}--\eqref{4.79} into account we obtain
\begin{gather*}
(\G_{0a}+\wh\G_a)v_0(\l)=(M_1(\l)+N_{1+}(\l))P_H,\;\;\;
 (\G_{0a}+\wh\G_a)u_+(\l)=M_{2+}(\l)+N_{2+}(\l),\;\; \l\in\bC_+\\
(\wh\G_a-\G_{1b}-i\wh\G_b)v_0(\l)=(N_{1+}(\l)+M_{3+}(\l))P_H,\;\;\;
\l\in\bC_+\\
(\wh\G_a-\G_{1b}-i\wh\G_b)u_+(\l)=N_{2+}(\l)+M_{4+}(\l),\;\;\;
\l\in\bC_+
\\
(\G_{0a}+\wh\G_a)v_0(\l)=M_1(\l)P_H+N_{1-}(\l)P_{\wh H}-iP_{\wh
H},\quad
\l\in\bC_-\\
(\G_{0a}+\wh\G_a)u_-(\l)=(N_{1-}(\l)-iI_{\wh H},\, M_{2-}(\l)):\wh
H\oplus\wt
\cH_b\to H_0, \quad\l\in\bC_-\\
-\G_{1b} v_0(\l)=(M_{3-}(\l), \, N_{2-}(\l)), \qquad -\G_{1b}
u_-(\l)=(N_{2-}(\l), \, M_{4-}(\l)), \quad\l\in\bC_-
\end{gather*}
This and definitions \eqref{4.18},  \eqref{4.19}, \eqref{4.21} and
\eqref{4.81a} of $m_0(\cd), \; \Phi_\pm (\cd), \; \Psi_\pm(\cd)$
and $ \dot M_\pm (\cd)$ yield the equalities \eqref{4.84},
\eqref{4.85} and \eqref{4.26}--\eqref{4.28}. Finally one proves
\eqref{4.29} in the same way as in Proposition \ref{pr4.5}.
\end{proof}
\begin{theorem}\label{th4.12}
Let in Case 2 $\pair$ be a boundary parameter defined by
\eqref{4.54} and \eqref{4.55}. Then:

{\rm (1)} Statement {\rm (1)} of Theorem \ref{th4.6} holds with
the boundary condition \eqref{4.41} and the following boundary
conditions in place of \eqref{4.42}--\eqref{4.44}:
\begin {gather}
\wh C_{0}(\l)(i\wh\G_a v_\tau(\l)-P_{\wh H})+ \wt
C_{0b}(\l)\wt\G_{0b}v_\tau(\l)+C_1(\l)\G_{1b}v_\tau(\l)=0,
\;\;\;\;\l\in\bC_+
\label {4.94}\\
\wh D_{0}(\l)(i\wh\G_a v_\tau(\l)-P_{\wh H})+ \wt
D_{0b}(\l)\wt\G_{0b}v_\tau(\l)+D_1(\l)\G_{1b}v_\tau(\l)=0,\;\;\;\;\l\in\bC_-\label
{4.95}
\end{gather}

{\rm (2)} The solution $v_\tau(\cd,\l)$ is of the form
\eqref{4.45}, \eqref{4.46}, where $v_0(\cd,\l)$ and
$u_\pm(\cd,\l)$ are defined in Proposition \ref{pr4.10} and
$\Psi_\pm(\l)$ and $\dot M_\pm(\l)$ are given by \eqref{4.21} and
\eqref{4.81a}.
\end{theorem}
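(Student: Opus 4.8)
The plan is to follow the proof of Theorem~\ref{th4.6} line by line, replacing the Case~1 objects with their Case~2 counterparts supplied by Propositions~\ref{pr4.10} and~\ref{pr4.11}. I would prove (1) and (2) simultaneously: take \eqref{4.45}, \eqref{4.46} as the \emph{definition} of $v_\tau(\cd,\l)$, verify that it lies in $\lo{H_0}$, solves Eq.~\eqref{3.2}, and satisfies \eqref{4.41}, \eqref{4.94}, \eqref{4.95}, and then invoke uniqueness. The first point to secure is that the right-hand sides of \eqref{4.45}, \eqref{4.46} make sense. By \eqref{4.28} together with the block form \eqref{4.83} of $M_-(\cd)$ one has $\dot M_-(\l)=P_{\cH_b}M_-(\l)\up(\wh H\oplus\wt\cH_b)$, so by Proposition~\ref{pr2.10a}(3) the function $\dot M_-(\cd)$ is the Weyl function of the boundary triplet $\dot\Pi_-$ for $T^*$ (Proposition~\ref{pr3.6}(2)). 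Hence \cite[Theorem 3.11]{Mog13.2} yields $0\in\rho(\tau_-(\l)+\dot M_-(\l))$ for $\l\in\bC_-$, which makes $T_\pm(\l)$ (as in \eqref{4.46a}) well defined and shows that \eqref{4.45}, \eqref{4.46} determine an operator solution $v_\tau(\cd,\l)\in\lo{H_0}$ of Eq.~\eqref{3.2}. Uniqueness of such a solution follows from uniqueness in the boundary value problem \eqref{4.59}--\eqref{4.61.2} (Theorem~\ref{th4.8} with $f=0$), which simultaneously pins $v_\tau$ down and establishes (2).

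The condition \eqref{4.41} is immediate: applying $\G_{1a}$ to \eqref{4.45}, \eqref{4.46} and using $\G_{1a}v_0(\l)=-P_H$ and $\G_{1a}u_\pm(\l)=0$ from \eqref{4.62}, \eqref{4.64} annihilates the correction term. The condition \eqref{4.95} on $\bC_-$ is the clean case and transcribes the Case~1 computation verbatim. Writing $T_-(\l)=(\tau_-(\l)+\dot M_-(\l))^{-1}$ and representing $\tau_-(\l)$ as in \eqref{4.48}, I would apply $i\wh\G_a$, $\wt\G_{0b}$ and $\G_{1b}$ to \eqref{4.46}, feeding in $i\wh\G_a v_0=P_{\wh H}$, $\wt\G_{0b}v_0=0$, $i\wh\G_a u_-=\wt P_{\wh H}$, $\wt\G_{0b}u_-=P_{\wt\cH_b}$ from \eqref{4.63}, \eqref{4.65} and $\Psi_-=-\G_{1b}v_0$, $\dot M_-=-\G_{1b}u_-$ from \eqref{4.21}. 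Since $\wt P_{\wh H}+P_{\wt\cH_b}=I_{\wh H\oplus\wt\cH_b}$, this gives $(i\wh\G_a v_\tau-P_{\wh H})+\wt\G_{0b}v_\tau=-T_-(\l)\Psi_-(\l)$ and $\G_{1b}v_\tau=-(I-\dot M_-(\l)T_-(\l))\Psi_-(\l)$, i.e. $\{(i\wh\G_a v_\tau(\l)-P_{\wh H})h_0+\wt\G_{0b}v_\tau(\l)h_0,\ \G_{1b}v_\tau(\l)h_0\}\in\tau_-(\l)$ for every $h_0\in H_0$. Expanding this membership through \eqref{2.2} and the block form \eqref{4.55} of $D_0(\l)$ yields \eqref{4.95}.

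The condition \eqref{4.94} on $\bC_+$ is the main obstacle. Here the correction term is built from $\tau_-^*(\ov\l)$ and $\dot M_+(\l)$, and Proposition~\ref{pr4.10} controls $v_0,u_+$ on $\bC_+$ only through $\G_{1a}$ and $\G_{0b}$ (namely $\G_{0b}v_0=0$, $\G_{0b}u_+=I_{\cH_b}$), so the values of $\wh\G_a$ and $\wt\G_{0b}$ on $v_0,u_+$ must be recovered from the definitions \eqref{4.81a} of $\Psi_+,\dot M_+$. The subtlety is that for $\dot\Pi_-$ the space $\dot\cH_2=\wh H\oplus\wh\cH_b$ now has two summands, so the term $iP_2h_0$ in the correspondence between $\tau_+(\l)$ and $\tau_-^*(\ov\l)$ (the analogue of \eqref{4.49}) acts in both the $\wh H$ and the $\wh\cH_b$ directions; the extra $-i\wh\G_b$ appearing in \eqref{4.81a}, combined with $\wt\G_{0b}=\G_{0b}+\wh\G_b$ (see \eqref{3.35}) and $\G_{0b}v_0=0$ on $\bC_+$, is exactly what lets one recover the full $\wt\cH_b$-component $\wt\G_{0b}v_\tau$ rather than only its $\cH_b$-part. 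Concretely I would read off from \eqref{4.81a} the separate $\wh H$-, $\cH_b$- and $\wh\cH_b$-components of $\Psi_+$ and $\dot M_+$ in terms of $\wh\G_a v_0,\G_{1b}v_0,\wh\G_b v_0$ (and the $u_+$ analogues), substitute into \eqref{4.45}, and, after reassembling $\wt\G_{0b}v_\tau=\G_{0b}v_\tau+\wh\G_b v_\tau$, match the result against the representation of $\tau_+(\l)$ through $T_+(\l)=(\tau_-^*(\ov\l)+\dot M_+(\l))^{-1}$ (the Case~2 analogues of \eqref{4.47}, \eqref{4.49}). This produces the membership $\{(i\wh\G_a v_\tau(\l)-P_{\wh H})h_0+\wt\G_{0b}v_\tau(\l)h_0,\ \G_{1b}v_\tau(\l)h_0\}\in\tau_+(\l)$, whence \eqref{4.94} follows by \eqref{2.2} and \eqref{4.55}. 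The only genuinely technical work is this three-way bookkeeping of $\wh H\oplus\cH_b\oplus\wh\cH_b$ on $\bC_+$; everything else is a direct transcription of the proof of Theorem~\ref{th4.6}.
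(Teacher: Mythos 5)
Your proposal is correct and follows essentially the same route as the paper's own proof: well-posedness of \eqref{4.45}, \eqref{4.46} via identifying $\dot M_-(\cd)$ as the Weyl function of $\dot\Pi_-$ so that $0\in\rho(\tau_-(\l)+\dot M_-(\l))$, the condition \eqref{4.41} from \eqref{4.62} and \eqref{4.64}, the verbatim $\bC_-$ computation yielding $(i\wh\G_a v_\tau(\l)-P_{\wh H})+\wt\G_{0b}v_\tau(\l)=-T_-(\l)\Psi_-(\l)$, and uniqueness from the boundary value problem \eqref{4.59}--\eqref{4.61.2}. The three-way bookkeeping on $\bC_+$ that you flag as the only technical point is exactly what the paper carries out in \eqref{4.97}--\eqref{4.99}: it extracts $\wh\G_a v_0,\ \G_{1b}v_0,\ \wh\G_b v_0$ (and the $u_+$ analogues) componentwise from \eqref{4.81a}, reassembles $\wt\G_{0b}=\G_{0b}+\wh\G_b$ using $\G_{0b}v_0(\l)=0$ and $\G_{0b}u_+(\l)=I_{\cH_b}$, and matches the result against the representation \eqref{4.96} of $\tau_+(\l)$ with the projector $P_{\wh H\oplus\wh\cH_b}$ playing the role of $P_2$, just as you describe.
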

\begin{proof}
As in Theorem \ref{th4.6} one proves that \eqref{4.45} and
\eqref{4.46} correctly define the solution
$v_\tau(\cd,\l)\in\lo{H_0}$ of Eq. \eqref{3.2}. Therefore it
remains to show that such $v_\tau(\cd,\l)$ is a unique solution of
\eqref{3.2} belonging to $\lo{H_0}$ and  satisfying \eqref{4.41},
\eqref{4.94} and \eqref{4.95}.

First observe that  \eqref{4.62} and \eqref{4.64} yield
\eqref{4.41}. Next assume that $T_\pm(\l)$ are defined by
\eqref{4.46a}. Then in the same way as in Theorem \ref{th4.6} one
gets   \eqref{4.48} and the equality
\begin {gather}
\tau_+(\l)=\{\{(-T_+(\l)+iP_{\wh H\oplus\wh\cH_b}- iP_{\wh
H\oplus\wh\cH_b}\dot M_{+}(\l)T_+(\l))h ,\qquad \qquad\quad
\qquad\qquad\label{4.96}\\
\qquad\qquad \qquad\qquad\qquad \qquad\qquad
\qquad\qquad(-P_{\cH_b} + P_{\cH_b}\dot
M_{+}(\l)T_+(\l))h\}:h\in\wh H\oplus\wt\cH_b\},\nonumber
\end{gather}
where $P_{\wh H\oplus\wh\cH_b}$ is the orthoprojector in $\wh
H\oplus\wt\cH_b$ onto $\wh H\oplus\wh\cH_b(=( \wh
H\oplus\wt\cH_b)\ominus \cH_b)$.

It follows from \eqref{4.81a} that
\begin {gather}
\wh\G_a v_0(\l)=\wt P_{\wh H}\Psi_+(\l)-iP_{\wh H}, \quad
\G_{1b}v_0(\l)=-P_{\cH_b}\Psi_+(\l),\;\;\;\l\in\bC_+ \label{4.97}\\
 \wh \G_b v_0(\l)=iP_{\wh\cH_b}\Psi_+(\l),
\;\;\;\l\in\bC_+\label{4.97a}\\
\wh\G_a u_+(\l)=\wt P_{\wh H}\dot M_+(\l), \quad
\G_{1b}u_+(\l)=-P_{\cH_b}\dot M _+(\l), \quad \wh \G_b
u_+(\l)=iP_{\wh\cH_b}\dot M_+(\l),\label{4.98}
\end{gather}
where $\wt P_{\wh H}, \; P_{\cH_b}$ and $P_{\wh\cH_b}$ are the
orthoprojectors in $\wh H\oplus\wt\cH_b$ onto $\wh H, \; \cH_b$
and $\wh \cH_b$ respectively. Moreover, \eqref{4.97a}, the last
equality in \eqref{4.98} and  the first equalities in \eqref{4.63}
and \eqref{4.65} give
\begin {equation}\label{4.99}
\wt\G_{0b}v_0(\l)=iP_{\wh\cH_b}\Psi_+(\l),
\;\;\;\;\wt\G_{0b}u_+(\l)=I_{\cH_b}+i P_{\wh\cH_b} \dot M_+(\l),
\;\;\; \l\in\bC_+.
\end{equation}
Now combining \eqref{4.45} with  \eqref{4.97}--\eqref{4.99} one
gets \eqref{4.52} and the equality
\begin {equation*}
(i\wh\G_a v_\tau(\l)-P_{\wh
H})+\wt\G_{0b}v_\tau(\l)=(-T_+(\l)+iP_{\wh
H\oplus\wh\cH_b}-iP_{\wh H\oplus\wh\cH_b}\dot
M_+(\l)T_+(\l))\Psi_+(\l),\;\;\;\l\in\bC_+.
\end{equation*}
Moreover, \eqref{4.46} together with   \eqref{4.63}, \eqref{4.65}
and \eqref{4.21} leads to \eqref{4.53} and the equality
\begin {equation*}
(i\wh\G_a v_\tau(\l)-P_{\wh H})+\wt\G_{0b}v_\tau(\l)=-\wt P_{\wh
H}T_-(\l)\Psi_-(\l)-P_{\wt\cH_b}T_-(\l)\Psi_-(\l)=-T_-(\l)\Psi_-(\l),
\;\;\;\l\in\bC_-.
\end{equation*}
Therefore in view of \eqref{4.48} and \eqref{4.96} one has
\begin{equation*}
\{(i\wh\G_a v_\tau(\l)-P_{\wh H})h_0+\wt\G_{0b}v_\tau(\l)h_0,
\G_{1b}v_\tau(\l)h_0\}\in\tau_\pm(\l), \quad h_0\in H_0, \quad
\l\in\bC_\pm,
\end{equation*}
which implies \eqref{4.94} and \eqref{4.95}. Finally, uniqueness
of $v_\tau(\cd,\l)$ follows from uniqueness of the solution  of
the boundary value problem \eqref{4.59}--\eqref{4.61.2}.
\end{proof}
\section{$m$-functions and eigenfunction expansions}\label{sect5}
\subsection{$m$-functions}
In this subsection we assume that $\wt U$ is a $J$-unitary
extension \eqref{3.17.5} of $U$ and $\G_{0a}$ is the mapping
\eqref{3.23}.

Let $\tau$ be a boundary parameter and let $v_\tau
(\cd,\l)\in\lo{H_0}$ be the  operator solution of Eq. \eqref{3.2}
defined in Theorems \ref{th4.6} and \ref{th4.12}. Similarly to the
case $n_-(\Tmi)\leq n_+(\Tmi)$ (see \cite{Mog13.1}) we introduce
the following definition.
\begin{definition}\label{def5.2}
The operator function $m_\tau(\cd):\CR\to [H_0]$ defined by
\begin {equation}\label{5.0}
m_\tau(\l)=(\G_{0a}+\wh \G_a)v_\tau (\l)+\tfrac i 2 P_{\wh H},
\quad\l\in\CR,
\end{equation}
is called the $m$-function corresponding to the boundary parameter
$\tau$ or, equivalently, to the boundary value problem
\eqref{4.2}--\eqref{4.5} (in \emph{Case 1}) or
\eqref{4.59}--\eqref{4.61.2} (in \emph{Case 2}).
\end{definition}
From \eqref{4.41} it follows from   that $m_\tau(\cd)$ satisfies
the equality
\begin {equation}\label{5.1}
\wt U v_{\tau}(a,\l)\left(=\begin{pmatrix} \G_{0a}+\wh \G_a\cr
\G_{1a}
\end{pmatrix} v_\tau(\l)\right )=\begin{pmatrix} m_\tau(\l)-\tfrac i 2 P_{\wh
H}\cr -P_H
\end{pmatrix}:H_0\to H_0\oplus H, \quad \l\in\CR.
\end{equation}

It is easily seen that Proposition 5.3 in \cite{Mog13.1} remains
valid in the case $n_+(\Tmi)<n_-(\Tmi) $. This means that for a
given operator $U$ (see \eqref{3.17.1}) and a boundary parameter
$\tau$ the $m$-functions $m_\tau^{(1)}(\cd)$ and
$m_\tau^{(2)}(\cd)$ corresponding to $J$-unitary extensions $\wt
U_1$ and $\wt U_2$ of $U$ are connected by
$m_\tau^{(1)}(\l)=m_\tau^{(2)}(\l)+B, \; \l\in\CR,$ with some
$B=B^*\in [H_0]$.

A definition of the  $m$-function $m_\tau$ in somewhat other terms
ia given in the following proposition, which directly follows from
Theorems \ref{th4.6} and \ref{th4.12}.
\begin{proposition}\label{pr5.3}
Let  $\pair$ be a boundary parameter \eqref{4.1}, \eqref{4.1a} in
Case 1 (resp. \eqref{4.54}, \eqref{4.55} in Case 2), let
$\f_U(\cd,\l)(\in [H_0,\bH]),\;\l\in\bC,$ be the operator solution
of Eq. \eqref{3.2} defined by \eqref{3.25} and let
$\psi(\cd,\l)(\in [H_0,\bH]), \;\l\in\bC,$ be the operator
solutions of Eq. \eqref{3.2} with
\begin {equation*}
\wt U \psi (a,\l)=\begin{pmatrix} -\tfrac i 2 P_{\wh H}\cr
-P_H\end{pmatrix}:H_0\to H_0\oplus H.
\end{equation*}
Then there exists a unique operator function
$m(\cd)=m_\tau(\cd):\CR\to [H_0]$ such that for any $\l\in\CR$ the
operator solution $v(\cd,\l)=v_\tau(\cd,\l)$ of Eq. \eqref{3.2}
given by
\begin {equation}\label{5.3}
v(t,\l):=\f_U (t,\l)m(\l)+\psi (t,\l)
\end{equation}
belongs to $\lo{H_0}$ and satisfies the  boundary conditions
\eqref{4.42}--\eqref{4.44} in Case 1 and \eqref{4.94},
\eqref{4.95} in Case 2.
\end{proposition}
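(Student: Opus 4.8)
The plan is to identify the operator solution $v_\tau(\cd,\l)$ supplied by Theorem~\ref{th4.6} (in Case 1) and Theorem~\ref{th4.12} (in Case 2) with the ansatz \eqref{5.3}, taking $m(\cd)=m_\tau(\cd)$ to be the $m$-function of Definition~\ref{def5.2}. Everything hinges on the elementary fact that a solution of the homogeneous system \eqref{3.2} is uniquely determined by its value at the regular endpoint $a$, so it suffices to compare the Cauchy data $v(a,\l)$ of the two solutions.

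First I would set $v(t,\l):=\f_U(t,\l)m_\tau(\l)+\psi(t,\l)$; as $\f_U(\cd,\l)$ and $\psi(\cd,\l)$ both solve \eqref{3.2}, so does $v(\cd,\l)$. Applying the $J$-unitary (hence invertible) operator $\wt U$ to the initial value and using \eqref{3.26} together with the normalization $\wt U\psi(a,\l)=(-\tfrac i 2 P_{\wh H},\,-P_H)^\top$ stated in the proposition, I would obtain
\begin {equation*}
\wt U v(a,\l)=\begin{pmatrix} I_{H_0}\cr 0\end{pmatrix}m_\tau(\l)+\begin{pmatrix} -\tfrac i 2 P_{\wh H}\cr -P_H\end{pmatrix}=\begin{pmatrix} m_\tau(\l)-\tfrac i 2 P_{\wh H}\cr -P_H\end{pmatrix},\quad \l\in\CR.
\end{equation*}
By the identity \eqref{5.1} the right-hand side is precisely $\wt U v_\tau(a,\l)$, so invertibility of $\wt U$ forces $v(a,\l)=v_\tau(a,\l)$, and Cauchy uniqueness then gives $v(\cd,\l)=v_\tau(\cd,\l)$ on $\cI$. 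Consequently the ansatz \eqref{5.3} lies in $\lo{H_0}$ and inherits the boundary conditions \eqref{4.42}--\eqref{4.44} (respectively \eqref{4.94}, \eqref{4.95}) that $v_\tau$ satisfies by Theorems~\ref{th4.6} and \ref{th4.12}; this settles existence.

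For uniqueness of $m_\tau$ I would suppose that $m^{(1)}$ and $m^{(2)}$ each produce through \eqref{5.3} a solution $v^{(1)},v^{(2)}\in\lo{H_0}$ fulfilling the listed boundary conditions. Repeating the computation of $\wt U v^{(j)}(a,\l)$ shows that its bottom ($H$-)component equals $-P_H$, i.e. $\G_{1a}v^{(j)}(\l)=-P_H$, so each $v^{(j)}$ also satisfies \eqref{4.41}. Both solutions then meet the full list of boundary conditions of Theorem~\ref{th4.6} (respectively Theorem~\ref{th4.12}), whose uniqueness assertion yields $v^{(1)}(\cd,\l)=v^{(2)}(\cd,\l)$ and hence $\f_U(a,\l)\bigl(m^{(1)}(\l)-m^{(2)}(\l)\bigr)=0$. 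The one genuinely delicate step is the conclusion: I expect the main point to be the injectivity of $\f_U(a,\l)$ on $H_0$, which I would read off from \eqref{3.26}, since $\wt U\f_U(a,\l)=(I_{H_0},\,0)^\top$ and composing with $\wt U$ followed by the projection onto $H_0$ furnishes a left inverse. This injectivity converts the equality of the solutions into the pointwise identity $m^{(1)}(\l)=m^{(2)}(\l)$, $\l\in\CR$, completing the argument.
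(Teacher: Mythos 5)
Your proof is correct and follows exactly the route the paper intends: the paper derives Proposition \ref{pr5.3} ``directly'' from Theorems \ref{th4.6} and \ref{th4.12} via the identity \eqref{5.1}, which is precisely your comparison of Cauchy data at $a$ through the invertible operator $\wt U$, together with the uniqueness assertion of those theorems. Your added observation that any $v$ of the form \eqref{5.3} automatically satisfies \eqref{4.41} (so the uniqueness in Theorems \ref{th4.6}/\ref{th4.12} applies) and that $\f_U(a,\l)$ is left-invertible by \eqref{3.26} correctly fills in the details the paper leaves implicit.
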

In the following  theorem we give a description of all
$m$-functions immediately in terms of the boundary parameter
$\tau$.
\begin{theorem}\label{th5.4}
 Assume  the following hypothesis:

{\rm (i)} $\tau_0$ is a boundary parameter from Remark
\ref{rem4.3} in Case 1 and Remark \ref{rem4.9} in Case 2.

{\rm (ii)} $m_0(\cd), \; \Phi_-(\cd), \; \Psi_-(\cd)$ and $\dot
M_-(\cd)$ are the operator functions given by \eqref{4.18},
\eqref{4.19} and \eqref{4.21} (this functions form the operator
matrix $X_-(\cd)$ given by \eqref{4.17} in Case 1 and \eqref{4.81}
in Case 2).

{\rm (iii)} $M_1(\cd)$ and $M_{j-}(\cd), \; j\in\{2,3,4\}, $ are
the operator functions defined by the block representations
\eqref{4.23} (in Case 1) and \eqref{4.83}(in Case 2) of the Weyl
function $M_-(\cd)$ (according to Propositions \ref{pr4.5} and
\ref{pr4.11} these functions can be also defined as the entries of
the block matrix representations \eqref{4.24}--\eqref{4.28} of
$m_0(\cd), \; \Phi_-(\cd),\; \Psi_-(\cd)$ and $\dot M_-(\cd)$).

Then: {\rm (1)} $m_0(\l)=m_{\tau_0}(\l),\;\l\in\CR,$ and for any
boundary parameter $\pair$ defined by \eqref{4.1} in Case 1 and
\eqref{4.54} in Case 2 the corresponding $m$-function
$m_\tau(\cd)$ is
\begin {equation}\label{5.4}
m_\tau(\l)=m_0(\l)+\F_-(\l)(D_0(\l)-D_1(\l)\dot M_-(\l))^{-1}
D_1(\l) \Psi_-(\l), \quad\l\in\bC_-.
\end{equation}
{\rm (2)} For each truncated boundary parameter $\pair$ defined by
\eqref{4.1}, \eqref{4.1.1} in Case 1  and \eqref{4.54},
\eqref{4.54.1} in Case 2 the corresponding $m$-function
$m_{\tau}(\cd)$ has the triangular block matrix representation
\begin {equation}\label{5.5}
m_{\tau}(\l)=\begin{pmatrix} m_{\tau, 1}(\l)  & m_{\tau, 2}(\l)
\cr 0 & -\tfrac i 2 I\end{pmatrix}, \quad \l\in\bC_-,
\end{equation}
with respect to the decomposition $H_0=H_0'\oplus\wh H_2$ in Case
1 (see \eqref{3.31}) and $H_0=H\oplus\wh H$ in Case 2. Moreover,
the operator function $m_{\tau, 1}(\cd)$ in \eqref{5.5} is
\begin {equation}\label{5.6}
m_{\tau, 1}(\l)=M_1(\l)+M_{2-}(\l)(\ov D_0(\l)-\ov
D_1(\l)M_{4-}(\l))^{-1} \ov D_1(\l)M_{3-}(\l), \quad \l\in\bC_-.
\end{equation}
\end{theorem}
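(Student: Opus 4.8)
The plan is to derive formula \eqref{5.4} directly from the explicit representation of the solution $v_\tau(\cd,\l)$ established in Theorems \ref{th4.6} and \ref{th4.12}, then specialize to the triangular case to obtain \eqref{5.5}--\eqref{5.6}. The two Cases are handled uniformly, since in both the solution on $\bC_-$ has the form $v_\tau(t,\l)=v_0(t,\l)-u_-(t,\l)(\tau_-(\l)+\dot M_-(\l))^{-1}\Psi_-(\l)$ (see \eqref{4.46}) and the $m$-function is $m_\tau(\l)=(\G_{0a}+\wh\G_a)v_\tau(\l)+\tfrac i2 P_{\wh H}$ by Definition \ref{def5.2}.

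First I would apply $(\G_{0a}+\wh\G_a)$ to \eqref{4.46} and add $\tfrac i2 P_{\wh H}$. Using definition \eqref{4.18} of $m_0(\l)$ and definition \eqref{4.19} of $\Phi_-(\l)=(\G_{0a}+\wh\G_a)u_-(\l)$, this yields immediately
\begin{equation*}
m_\tau(\l)=m_0(\l)-\F_-(\l)(\tau_-(\l)+\dot M_-(\l))^{-1}\Psi_-(\l),\quad\l\in\bC_-.
\end{equation*}
Taking $\tau=\tau_0$ from Remark \ref{rem4.3} (resp. \ref{rem4.9}), where $D_1(\l)\equiv 0$, the correction term vanishes and $m_0(\l)=m_{\tau_0}(\l)$, proving the first assertion of (1). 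For the general parameter, the representation \eqref{4.1} (resp. \eqref{4.54}) means $\tau_-(\l)=\{\{h_0,h_1\}:D_0(\l)h_0+D_1(\l)h_1=0\}$, so I would convert the resolvent-type inverse $(\tau_-(\l)+\dot M_-(\l))^{-1}$ into the operator expression $-(D_0(\l)-D_1(\l)\dot M_-(\l))^{-1}D_1(\l)$; this is the standard linear-relation computation already invoked via \cite[Theorem 3.11]{Mog13.2}, where $0\in\rho(D_0(\l)-D_1(\l)\dot M_-(\l))$ is guaranteed by $0\in\rho(\tau_-(\l)+\dot M_-(\l))$. Substituting gives exactly \eqref{5.4}.

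For part (2), I would insert the truncated block form \eqref{4.1.1} (resp. \eqref{4.54.1}) into \eqref{5.4}. With $D_0(\l)=\mathrm{diag}(I,\ov D_0(\l))$ and $D_1(\l)=(0,\ov D_1(\l))^\top$, the factor $D_1(\l)$ annihilates the first block, so only the lower-right corner of $\dot M_-(\l)$ and the second block column of $\F_-(\l)$ survive. Using the block representations \eqref{4.26}--\eqref{4.28}, namely $\F_-(\l)=\left(\begin{smallmatrix}N_{1-}(\l)&M_{2-}(\l)\cr -iI&0\end{smallmatrix}\right)$, $\Psi_-(\l)=(M_{3-}(\l),N_{2-}(\l))$, $\dot M_-(\l)=(N_{2-}(\l),M_{4-}(\l))$, together with the triangular form \eqref{4.25} (resp. \eqref{4.85}) of $m_0(\l)$, I would compute the product block by block. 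The $(1,1)$-entry collapses to \eqref{5.6}, the $(2,1)$-entry to $0$, and the $(2,2)$-entry to $-\tfrac i2 I$ (untouched by the correction because the second row of $\F_-$ is $(-iI,0)$ and is killed by $D_1$), yielding the triangular matrix \eqref{5.5}.

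The main obstacle will be the bookkeeping in converting the relation inverse $(\tau_-(\l)+\dot M_-(\l))^{-1}$ into the operator formula with $(D_0-D_1\dot M_-)^{-1}$, since one must track the precise sign and composition order through the definition \eqref{2.2} of $\tau_-(\l)$ and verify invertibility on the correct space; the remaining steps are a direct substitution of the already-proved block identities from Propositions \ref{pr4.5} and \ref{pr4.11}.
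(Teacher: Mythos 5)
Your proposal follows essentially the same route as the paper's proof: the paper likewise applies $\G_{0a}+\wh\G_a$ to \eqref{4.45} and \eqref{4.46} to get $m_\tau(\l)=m_0(\l)-\Phi_\pm(\l)(\,\cdot\,)^{-1}\Psi_\pm(\l)$, then invokes \cite[Lemma 2.1]{MalMog02} for $0\in\rho(D_0(\l)-D_1(\l)\dot M_-(\l))$ and the identity $-(\tau_-(\l)+\dot M_-(\l))^{-1}=(D_0(\l)-D_1(\l)\dot M_-(\l))^{-1}D_1(\l)$, and proves part (2) by exactly your block computation of $(D_0(\l)-D_1(\l)\dot M_-(\l))^{-1}$ combined with \eqref{4.25}--\eqref{4.28}. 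The only minute discrepancy is that the paper obtains $m_{\tau_0}(\l)=m_0(\l)$ on all of $\CR$ from the direct observation $v_{\tau_0}(\cd,\l)=v_0(\cd,\l)$, whereas your vanishing-correction argument as written covers only $\bC_-$ and should be supplemented by the same remark for the $\bC_+$ formula \eqref{4.45}, where the correction term for $\tau_0$ vanishes equally trivially.
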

\begin{proof}
We prove the theorem only for  \emph{Case 1}, because in
\emph{Case 2} the proof is similar.

 (1) It is easily seen  that  $v_0(t,\l)=v_ {\tau_0}(t,\l)$, where
$v_0(\cd,\l)$ is defined in Proposition \ref{pr4.4}.  Hence by
\eqref{4.18} one has $m_0(\l)=m_{\tau_0}(\l)$. Next, applying the
operator $\G_{0a}+\wh\G_a$ to the equalities \eqref{4.45} and
\eqref{4.46} with taking \eqref{4.18} and \eqref{4.19} into
account one gets
\begin {gather}
m_\tau(\l)=m_0(\l)-\Phi_+(\l)(\tau_-^*(\ov\l)+\dot
M_{+}(\l))^{-1}\Psi_+(\l),
\quad\l\in\bC_+, \label{5.7}\\
m_\tau(\l)=m_0(\l)- \Phi_-(\l)(\tau_-(\l)+\dot
M_{-}(\l))^{-1}S_{-}(\l), \quad\l\in\bC_-. \label{5.8}
\end{gather}
Moreover, according to \cite[Lemma 2.1]{MalMog02}
$0\in\rho(D_0(\l)-D_1(\l)\dot M_{-}(\l))$ and
\begin {gather*}
-(\tau_-(\l)+\dot M_{-}(\l))^{-1}=(D_0(\l)-D_1(\l)\dot
M_{-}(\l))^{-1}D_1(\l), \quad\l\in\bC_- ,
\end{gather*}
which together with \eqref{5.8} yields \eqref{5.4}.

{\rm (2)} It follows from \eqref{4.1.1} and the second equality in
\eqref{4.28} that $(D_0(\l)-D_1(\l)\dot M_-(\l))^{-1}=$

$\begin{pmatrix}I & 0 \cr -\ov D_1(\l) N_{2-}(\l) & \ov
D_0(\l)-\ov D_1(\l) M_{4-}(\l)
\end{pmatrix}^{-1}=\begin{pmatrix}I & 0 \cr * & (\ov D_0(\l)-
\ov D_1(\l) M_{4-}(\l))^{-1} \end{pmatrix}$

\noindent (the entry $*$ does not matter). Combining this equality
with \eqref{5.4} and taking \eqref{4.25}--\eqref{4.27} into
account one gets the equalities \eqref{5.5} and \eqref{5.6}.
\end{proof}
By using the reasonings similar to those in the proof of
Proposition 5.7 in \cite{Mog13.1} one proves the following
proposition.
\begin{proposition}\label{pr5.6}
The $m$-function $m_\tau(\cd)$ is a Nevanlinna operator function
satisfying
\begin {equation*}
(\im \,\l)^{-1}\cd \im\, m_\tau(\l)\geq \int_\cI
v_\tau^*(t,\l)\D(t) v_\tau(t,\l)\, dt, \quad \l\in\bC_-.
\end{equation*}
\end{proposition}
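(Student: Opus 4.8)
The plan is to establish the three requirements of Definition~\ref{def2.0} (holomorphy, the symmetry $m_\tau^*(\l)=m_\tau(\ov\l)$, and $\im\,\l\cd\im\,m_\tau(\l)\ge0$) and, simultaneously, the sharper integral estimate. Holomorphy of $m_\tau(\cd)$ on $\CR$ is immediate from \eqref{5.7}, \eqref{5.8} together with the invertibility $0\in\rho(\tau_-(\l)+\dot M_-(\l))$, $\l\in\bC_-$ (and its $\bC_+$ counterpart). For the symmetry I would take adjoints in \eqref{5.7}, replace $\l$ by $\ov\l$, and use the relations \eqref{4.29} among $m_0,\F_\pm,\Psi_\pm,\dot M_\pm$ together with the correspondence between $\tau_+^*(\ov\l)$ and $\tau_-(\l)$ built into the class $\wt R_-$; this turns \eqref{5.7} at $\ov\l$ into exactly \eqref{5.8} at $\l$, giving $m_\tau^*(\l)=m_\tau(\ov\l)$. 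It then suffices to prove the integral inequality on $\bC_-$, which also yields the sign of $\im\,\l\cd\im\,m_\tau$ there, the statement on $\bC_+$ following from the symmetry just proved.

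The main tool is a two-parameter Green identity for the operator solution $v_\tau$. Since $J^*=-J$, $B(t)=B^*(t)$ and $\D(t)=\D^*(t)$, two solutions of \eqref{3.2} with parameters $\mu$ and $\l$ satisfy $\tfrac{d}{dt}(Jv_\tau(t,\mu),v_\tau(t,\l))=(\mu-\ov\l)(\D(t)v_\tau(t,\mu),v_\tau(t,\l))$; integrating over $\cI$ and writing the result in operator form gives
\begin{equation*}
[v_\tau(\mu),v_\tau(\l)]_b-v_\tau^*(a,\l)\,J\,v_\tau(a,\mu)=(\mu-\ov\l)\int_\cI v_\tau^*(t,\l)\D(t)v_\tau(t,\mu)\,dt,\qquad \mu,\l\in\CR.
\end{equation*}
The same relation is obtained by applying Green's identity \eqref{2.10} for the decomposing triplet $\Pi_-=\bta$ to the pairs $\{v_\tau(\mu),\mu v_\tau(\mu)\},\{v_\tau(\l),\l v_\tau(\l)\}\in\Tma$, which also exhibits all the boundary data through $\G_0,\G_1$ and, via \eqref{3.33}, \eqref{3.34} and \eqref{4.41}, \eqref{4.42}, through $m_\tau$ and the singular values $\wh\G_{a2}v_\tau,\G_{0b}v_\tau,\G_{1b}v_\tau$.

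I would then set $\mu=\l\in\bC_-$ and evaluate the endpoint term at $a$ by means of \eqref{5.1}: since $\wt U$ is $J$-unitary, $v_\tau^*(a,\l)\,J\,v_\tau(a,\l)=(\wt Uv_\tau(a,\l))^*J(\wt Uv_\tau(a,\l))$ is computed directly from the block column $\left(m_\tau(\l)-\tfrac i2 P_{\wh H},\,-P_H\right)^\top$ and produces an explicit expression in $m_\tau(\l)$, $m_\tau^*(\l)$ and $P_{\wh H}$. Feeding this into the Green identity expresses the self-adjoint operator $\im\,m_\tau(\l)-(\im\,\l)\int_\cI v_\tau^*(t,\l)\D(t)v_\tau(t,\l)\,dt$ as $\tfrac1{2i}$ times the sum of $(m_\tau(\l)-m_\tau^*(\l))+v_\tau^*(a,\l)Jv_\tau(a,\l)$, an explicit expression in $m_\tau(\l)$ just found, and the singular endpoint term $-[v_\tau(\l),v_\tau(\l)]_b$. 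The inequality to be proved is exactly that this operator is $\le0$ for $\l\in\bC_-$ (dividing by $\im\,\l<0$ then reverses it to the stated form).

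The decisive step is to read off the sign of that boundary functional. By the boundary conditions \eqref{4.44} (respectively \eqref{4.95} in \Ca{2}), which as shown at the end of the proof of Theorem~\ref{th4.6} (respectively Theorem~\ref{th4.12}) place $\{(i\wh\G_{a2}v_\tau(\l)-P_{\wh H_2})+\G_{0b}v_\tau(\l),\,\G_{1b}v_\tau(\l)\}$ into $\tau_-(\l)$ for every $\l\in\bC_-$, one rewrites $[v_\tau(\l),v_\tau(\l)]_b$ through these values using \eqref{3.30} and invokes the sign condition built into the definition of the class $\wt R_-$ on $\bC_-$. I expect this to be the genuine obstacle: the form $[\cd,\cd]_b$ in \eqref{3.30} couples the $\wh\G_b$, $\G_{0b}$ and $\G_{1b}$ data, so one must match these block boundary values precisely with the components of the operator pair representing $\tau_-(\l)$, carry the normalization constants from \eqref{5.1} through \Ca{1} and \Ca{2} separately, and check that the residual $a$-endpoint terms combine with the $\tau$-controlled $b$-term to leave a nonpositive operator. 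Once the sign is secured for $\l\in\bC_-$, the full Nevanlinna property and the estimate on $\CR$ follow as indicated.
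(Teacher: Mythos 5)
Your plan is sound, and it is genuinely more hands-on than what the paper does: the paper proves Proposition~\ref{pr5.6} by reference only, remarking that one argues ``similarly to the proof of Proposition 5.7 in \cite{Mog13.1}'', i.e.\ through the boundary-triplet calculus surrounding the Krein-type formulas \eqref{5.7}--\eqref{5.8} -- identities of the type \eqref{2.18a} for the $\g$-fields and Weyl functions, combined with the class property of $\tau$; compare Subsection~\ref{sub5.4}, where exactly this calculus yields the equality version $m(\mu)-m^*(\l)=(\mu-\ov\l)\int_\cI v^*(t,\l)\D(t)v(t,\mu)\,dt$ in the maximal symmetric case. In that picture the inequality of Proposition~\ref{pr5.6} records the loss of norm under compression to $\gH$, whereas your two-parameter Lagrange identity (the operator form of \eqref{3.6}) localizes the same defect at the singular endpoint, in the form $[v_\tau(\l),v_\tau(\l)]_b$ controlled via \eqref{3.30}. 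Your individual steps check out: holomorphy from \eqref{5.7}--\eqref{5.8} and the invertibility of $\tau_-(\l)+\dot M_-(\l)$; the symmetry $m_\tau^*(\l)=m_\tau(\ov\l)$ by taking adjoints in \eqref{5.7} and using \eqref{4.29}, which indeed reproduces \eqref{5.8}; the reduction of the $a$-endpoint term by $J$-unitarity of $\wt U$ and \eqref{5.1} (note it is not simply $m_\tau^*-m_\tau$: the $\wh H$-column contributes the extra block $i\bigl(m_\tau(\l)-\tfrac i2 I\bigr)^*P_{\wh H}\bigl(m_\tau(\l)-\tfrac i2 I\bigr)$, which must be carried along, exactly as you predict); and the memberships you invoke are precisely those established at \eqref{4.52}, \eqref{4.53} and their Case~2 analogues in the proofs of Theorems~\ref{th4.6} and~\ref{th4.12}.

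The one place where you stop short of a proof is the step you yourself flag as the obstacle: the sign inequality satisfied by pairs $\{h_0,h_1\}\in\tau_-(\l)$, $\l\in\bC_-$, including the $\tfrac12\|P_2h_0\|^2$-type correction whose trace is visible in the Green identity \eqref{2.10} and in Theorem~\ref{th6.14}(3). This inequality is not derived in the present paper either -- it is part of the definition of the class $\wt R_-(\cH_0,\cH_1)$ in \cite{Mog13.2} -- so to close your argument you must quote it explicitly and then match the blocks ($\wh\G_{a2}v_\tau$, $\G_{0b}v_\tau$, $\G_{1b}v_\tau$ in Case~1; $\wh\G_a v_\tau$, $\wt\G_{0b}v_\tau$, $\G_{1b}v_\tau$ in Case~2, where $\wt\G_{0b}=\G_{0b}+\wh\G_b$) against \eqref{3.30}; the decomposing triplet \eqref{3.33}--\eqref{3.34}, resp.\ \eqref{3.37}--\eqref{3.38}, was engineered so that these residuals assemble, so no step of your plan would fail. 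As written, however, it is an accurate roadmap rather than a complete proof, with the decisive verification left as an expectation.
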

\subsection{Green's function}
In the sequel we put $\gH:=\LI$ and denote by $\gH_b$ the set of
all $\wt f\in\gH$ with the following property: there exists
$\b_{\wt f}\in\cI$
 such that for some (and  hence for all) function
$f\in\wt f$ the equality $\D(t)f(t)=0 $ holds a.e. on $(\b_{\wt
f}, b)$.

Let $\f_U(\cd,\l)$ be the operator-valued solution \eqref{3.25},
let $\tau$ be a boundary parameter and let
$v_\tau(\cd,\l)\in\lo{H_0}$ be the operator-valued  solution of
Eq. \eqref{3.2} defined in Theorems \ref{th4.6} and \ref{th4.12}.
\begin{definition}\label{def6.1}
The operator function $G_\tau (\cd,\cd,\l):\cI\times\cI\to [\bH]$
given by
\begin {equation} \label{6.1}
G_\tau (x,t,\l)=\begin{cases} v_\tau(x,\l)\, \f_U^*(t,\ov\l), \;\;
x>t \cr \f_U(x,\l)\, v_\tau^* (t,\ov\l), \;\; x<t \end{cases},
\quad \l\in \CR
\end{equation}
will be called the Green's function corresponding to the boundary
parameter $\tau$.
\end{definition}
\begin{theorem}\label{th6.2}
Let $\tau$ be a boundary parameter and let $R_\tau(\cd)$ be the
corresponding generalized resolvent of the relation $T$ (see
Theorems \ref{th4.2} and \ref{th4.8}). Then
\begin {equation}\label{6.2}
R_\tau(\l)\wt f=\pi\left(\int_\cI  G_\tau (\cd,t,\l)\D(t)f(t)\, dt
\right ), \quad \wt f\in\gH, \quad  f\in\wt f.
\end{equation}
\end{theorem}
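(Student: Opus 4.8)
The plan is to reduce the kernel formula \eqref{6.2} to the uniqueness of the solution of the boundary value problem supplied by Theorem \ref{th4.2} (Case 1) and Theorem \ref{th4.8} (Case 2). Fixing $f\in\wt f$, I would set $y(x,\l):=\int_\cI G_\tau(x,t,\l)\D(t)f(t)\,dt$, which by \eqref{6.1} splits as $y(x,\l)=v_\tau(x,\l)\int_a^x\f_U^*(t,\ov\l)\D(t)f(t)\,dt+\f_U(x,\l)\int_x^b v_\tau^*(t,\ov\l)\D(t)f(t)\,dt$. It then suffices to verify that $y(\cd,\l)\in\AC\cap\lI$, that it solves the inhomogeneous equation \eqref{4.2} (resp. \eqref{4.59}), and that it satisfies the boundary conditions \eqref{4.3}--\eqref{4.5} (resp. \eqref{4.60}--\eqref{4.61.2}). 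Uniqueness then identifies $y(\cd,\l)$ with the solution $y_f$ of Theorem \ref{th4.2}, whence $\pi(y(\cd,\l))=R_\tau(\l)\wt f$.

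First I would treat the differential equation. Differentiating the split expression and using that $v_\tau(\cd,\l)$ and $\f_U(\cd,\l)$ both solve the homogeneous system \eqref{3.2}, the terms generated at the diagonal $x=t$ collapse to $J\bigl[v_\tau(x,\l)\f_U^*(x,\ov\l)-\f_U(x,\l)v_\tau^*(x,\ov\l)\bigr]\D(x)f(x)$, so $y$ solves \eqref{4.2} precisely when the normalisation identity
\[
v_\tau(x,\l)\,\f_U^*(x,\ov\l)-\f_U(x,\l)\,v_\tau^*(x,\ov\l)=J^{-1},\qquad x\in\cI,
\]
holds (here $J^{-1}=-J$). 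To prove it I would introduce the fundamental matrix $Y(\cd,\l)$ of \eqref{3.2} with $Y(a,\l)=I_\bH$, write $v_\tau(x,\l)=Y(x,\l)v_\tau(a,\l)$ and $\f_U(x,\l)=Y(x,\l)\f_U(a,\l)$, and use the constancy of the matrix Wronskian $Y(x,\ov\l)^*JY(x,\l)\equiv J$ (differentiate and use $J^*=-J$, $B=B^*$, $\D=\D^*$, evaluating at $a$), which gives $Y(x,\l)J^{-1}Y(x,\ov\l)^*\equiv J^{-1}$. Hence the identity reduces to $v_\tau(a,\l)\f_U^*(a,\ov\l)-\f_U(a,\l)v_\tau^*(a,\ov\l)=J^{-1}$, which I would check from \eqref{3.26}, \eqref{5.1}, the Nevanlinna symmetry $m_\tau^*(\ov\l)=m_\tau(\l)$ of Proposition \ref{pr5.6}, and the explicit block form \eqref{1.3} of $J$: the $J$-unitarity $\wt U^*J\wt U=J$ turns the left-hand side into $\wt U^{-1}\left(\begin{smallmatrix}-iP_{\wh H}&P_H\\-P_H&0\end{smallmatrix}\right)\wt U^{-*}$, and the inner block is exactly $-J$.

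Next I would settle regularity. Local absolute continuity is immediate from the integral representation, so $y(\cd,\l)\in\AC$. For $\lI$ I would first take $f$ with compact support in $\cI$: beyond the support the lower integral $\int_x^b v_\tau^*\D f$ vanishes, so near $b$ the function $y(\cd,\l)$ coincides with $v_\tau(\cd,\l)$ applied to the fixed vector $\int_\cI\f_U^*(t,\ov\l)\D(t)f(t)\,dt\in H_0$; since $v_\tau(\cd,\l)\in\lo{H_0}$ this gives $y(\cd,\l)\in\lI$, and the general case follows by density of compactly supported functions and boundedness of $R_\tau(\l)$, as in \cite{Mog13.1}.

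The boundary conditions are the delicate point, and I expect this to be the main obstacle. At the regular endpoint $a$ one has $y(a,\l)=\f_U(a,\l)\,c$ with $c=\int_\cI v_\tau^*(t,\ov\l)\D(t)f(t)\,dt$, so $\wt U y(a,\l)=(c,0)^\top$ by \eqref{3.26}, giving $\G_{1a}y=0$ at once. The remaining $\l$-dependent conditions \eqref{4.42}--\eqref{4.44} (resp. \eqref{4.94}, \eqref{4.95}) couple the boundary values of $y$ at both endpoints, and I would again use the compactly supported reduction: near $b$ the singular boundary values $\G_{0b}y,\ \G_{1b}y,\ \wh\G_b y$ coincide with those of $v_\tau(\cd,\l)$ on the same constant vector, for which these conditions are exactly the content of Theorem \ref{th4.6} (resp. \ref{th4.12}); combining them with the values at $a$ through the Lagrange identity \eqref{3.6} applied to the pair $\{y,\l y+f\}$ and to $\f_U,v_\tau$ delivers the mixed conditions. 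The bookkeeping of matching the two integrals $\int v_\tau^*\D f$ and $\int\f_U^*\D f$ against the boundary maps is the technical heart of the argument; alternatively one may avoid it by inserting \eqref{4.45}, \eqref{4.46} into $G_\tau$ and recognising $R_\tau(\l)$ as the Krein-type perturbation of $R_{\tau_0}(\l)=(A_0-\l)^{-1}$ of Remark \ref{rem4.3}, whose Green function is built from $v_0$ and $\f_U$, thereby inheriting the boundary conditions from the abstract resolvent formula of \cite{Mog13.2}.
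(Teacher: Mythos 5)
Your overall strategy coincides with the paper's: both proofs define $y_f(x,\l)=\int_\cI G_\tau(x,t,\l)\D(t)f(t)\,dt$, show that for $\wt f\in\gH_b$ it solves the boundary value problem, and then invoke the uniqueness statement of Theorems \ref{th4.2} and \ref{th4.8}, extending to general $\wt f$ by density. Your explicit Wronskian verification of the normalisation $v_\tau(x,\l)\f_U^*(x,\ov\l)-\f_U(x,\l)v_\tau^*(x,\ov\l)=-J$ (via constancy of $Y^*(x,\ov\l)JY(x,\l)$ and the evaluation at $x=a$ using \eqref{3.26}, \eqref{5.1} and $m_\tau^*(\ov\l)=m_\tau(\l)$) is correct and in fact more detailed than the paper, which delegates the differential-equation and regularity steps to \cite[Theorem 6.2]{Mog13.1}.

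The genuine gap sits exactly where you flag ``the technical heart'': the verification of the $\l$-dependent boundary conditions. You correctly obtain $\G_{1a}y_f=0$ and $\wh\G_a y_f=P_{\wh H}c$ with $c=\int_\cI v_\tau^*(t,\ov\l)\D(t)f(t)\,dt$, and you correctly observe that near $b$ the singular boundary values of $y_f$ are those of $v_\tau(\cd,\l)h_{\wt f}$, where $h_{\wt f}=\int_\cI\f_U^*(t,\ov\l)\D(t)f(t)\,dt$. But the conditions of Theorems \ref{th4.6} and \ref{th4.12} involve the shifted combination $i\wh\G_a v_\tau(\l)-P_{\wh H}$ applied to vectors of $H_0$, so everything hinges on the identity $P_{\wh H}\,c=(\wh\G_a v_\tau(\l)+iP_{\wh H})h_{\wt f}$ --- the paper's \eqref{6.10} --- which ties the two integrals $c$ and $h_{\wt f}$ together (and likewise underlies the coupling condition $\wh\G_{a1}y=\wh\G_b y$ in Case 1, via \eqref{6.12}). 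Your proposed mechanism, the Lagrange identity \eqref{3.6}, is never carried out and is not what the paper uses: the paper derives \eqref{6.10} from the triangular relation $v_\tau(t,\l)\up\wh H=\f_U(t,\l)(m_\tau(\l)-\tfrac i2 I_{H_0})\up\wh H$ (a consequence of \eqref{5.1}, \eqref{3.26} and uniqueness of solutions with given initial data) together with $m_\tau^*(\ov\l)=m_\tau(\l)$, which give $P_{\wh H}v_\tau^*(t,\ov\l)=(P_{\wh H}m_\tau(\l)+\tfrac i2 P_{\wh H})\f_U^*(t,\ov\l)$ and hence \eqref{6.10} after integration against $\D f$ and use of \eqref{5.0}. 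Notably, these are precisely the ingredients you already exploited at the single point $x=a$; what is missing in your write-up is propagating that relation in $t$ and integrating. Your alternative route --- substituting \eqref{4.45}, \eqref{4.46} into $G_\tau$ and appealing to the Krein-type resolvent formula of \cite{Mog13.2} --- is plausible but equally unexecuted at the same spot, so as it stands the boundary-condition step is asserted rather than proved.
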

\begin{proof}
As in \cite[Theorem 6.2]{Mog13.1} one proves that for each
$f\in\lI$ the inequality

\noindent $\int\limits_\cI ||G_\tau(x,t,\l)\D(t)f(t)||\, dt
<\infty$ is valid. This implies that formula
\begin {equation} \label{6.3}
y_f=y_f(x,\l):=\int_\cI  G_\tau (x,t,\l)\D(t)f(t)\, dt,  \quad
\l\in\CR
\end{equation}
correctly defines the function $y_f(\cd,\cd):\cI\times \CR\to
\bH$. Therefore \eqref{6.2} is equivalent to the following
statement: for each $\wt f\in\gH$
\begin {equation} \label{6.4}
y_f(\cd,\l)\in\lI \;\;\text{and}\;\; R_\tau(\l)\wt f = \pi
(y_f(\cd,\l)), \quad f\in\wt f, \;\;\;\l\in\CR.
\end{equation}
To prove \eqref{6.4} first assume that $\wt f\in\gH_b$. We show
that  the function $y_f(\cd,\l)$ given by \eqref{6.3} is a
solution of the boundary problem \eqref{4.2}--\eqref{4.5} in \emph
{Case 1} (resp. \eqref{4.59}--\eqref{4.61.2} in \emph{Case 2} ).
As in \cite[Theorem 6.2]{Mog13.1} one proves that $y_f(\cd,\l)$
belongs to $ \AC\cap \lI$ and satisfies \eqref{4.2} a.e. on $\cI$.
Now it remains to show that $y_f$ satisfies the boundary
conditions \eqref{4.3}--\eqref{4.5} in \emph {Case 1} (resp.
\eqref{4.60}--\eqref{4.61.2} in \emph{Case 2} ).

It follows from \eqref{6.3} and \eqref{6.1} that
\begin{gather}
y_f(a,\l)=\f_U(a,\l)\int_\cI  v_\tau^*(t,\ov\l)\D(t)f(t)\,dt,\label{6.5}\\
y_f(x,\l)=v_\tau(x,\l)\int_\cI \f_U^*(t,\ov\l)\D(t) f(t)\, dt,
\quad x\in (\b_{\wt f},b).\label{6.6}
\end{gather}
Let $\wt U$ be a $J$-unitary extension \eqref{3.17.5} of $U$ and
let $\G_a$ be the operator \eqref{3.24a}. Then $\G_a y_f=\wt U
y_f(a,\l)$ and in view of \eqref{6.5} one has
\begin {equation*}
\G_a y_f=\wt U \f_U(a,\l)\int_\cI  v_\tau^*(t,\ov\l)\D(t)f(t)\,
dt.
\end{equation*}
This and \eqref{3.26} imply
\begin {equation} \label{6.7}
\wh\G_a y_f=P_{\wh H} \int _\cI v_\tau^*(t,\ov\l)\D(t)f(t)\,
dt,\qquad \G_{1a}y_f=0.
\end{equation}
Next assume that
\begin {equation} \label{6.8}
h_{\wt f}:=\int_\cI \f_U^*(t,\ov\l)\D(t) f(t)\, dt (\in H_0)
\end{equation}
Then in view of \eqref{6.6} one has (see \cite[Remark 3.5
(1)]{Mog13.1})
\begin {equation} \label{6.9}
\wh \G_b y_f=\wh \G_b v_\tau(\l)h_{\wt f}, \quad \G_{0b} y_f=
\G_{0b} v_\tau(\l)h_{\wt f}, \quad \G_{1b} y_f= \G_{1b}
v_\tau(\l)h_{\wt f}.
\end{equation}
Let us also prove  the equality
\begin {equation} \label{6.10}
\wh \G_a y_f=(\wh \G_a v_\tau(\l)+iP_{\wh H})h_{\wt f}.
\end{equation}
It follows from \eqref{5.1} that
\begin {equation*}
\wt U v_\tau(a,\l)\up\wh H=\begin{pmatrix} (m_\tau(\l)-\tfrac i 2
I_{H_0})\up \wh H \cr 0 \end{pmatrix}:\wh H\to H_0\oplus H, \quad
\l\in\CR.
\end{equation*}
Therefore by \eqref{3.26} one has $v_\tau(t,\l)\up\wh H=
\f_U(t,\l)(m_\tau(\l)-\tfrac i 2 I_{H_0})\up \wh H$. Moreover,
according to Proposition \ref{pr5.6} $m_\tau^*(\ov\l)=m_\tau
(\l)$. This implies that
\begin {equation*}
P_{\wh H}v_\tau^*(t,\ov\l)= P_{\wh H}(m_\tau(\l)+\tfrac i 2
I_{H_0})\f_U^*(t,\ov\l) =(P_{\wh H} m_\tau(\l) + \tfrac i 2 P_{\wh
H}) \f_U^*(t,\ov\l)
\end{equation*}
and \eqref{6.7}, \eqref{6.8} yield
\begin {equation} \label{6.11}
\wh \G_a y_f=(P_{\wh H} m_\tau(\l) + \tfrac i 2 P_{\wh H})\int_\cI
\f_U^*(t,\ov\l)\D(t) f(t)\, dt=(P_{\wh H} m_\tau(\l) + \tfrac i 2
P_{\wh H})h_{\wt f}.
\end{equation}
Since in view of \eqref{5.0} $P_{\wh H} m_\tau(\l)=\wh \G_a
v_\tau(\l)+\tfrac i 2 P_{\wh H}$, the equality \eqref{6.11} gives
\eqref{6.10}.

The second equality in \eqref{6.7} gives the first condition in
\eqref{4.3} and the condition \eqref{4.60}. Next assume  \emph
{Case 1} and the hypothesis (A2). Then by \eqref{3.30b} and
\eqref{6.10}
\begin {equation} \label{6.12}
\wh \G_{a1} y_f=(\wh \G_{a1} v_\tau(\l)+iP_{\wh H_1})h_{\wt f},
\quad \wh \G_{a2} y_f=(\wh \G_{a2} v_\tau(\l)+iP_{\wh H_2})h_{\wt
f}
\end{equation}
and combining of \eqref{6.12} and \eqref{6.9} with
\eqref{4.42}--\eqref{4.44} shows that the second condition in
\eqref{4.3} and the conditions \eqref{4.4} and  \eqref{4.5} are
fulfilled for $y_f$.

Now assume  \emph{Case 2}   and the hypothesis (A3). Then by
\eqref{3.35} and \eqref{6.9} one has $\wt\G_{0b} y_f= \wt\G_{0b}
v_\tau(\l)h_{\wt f}$. Combining of this equality and \eqref{6.9},
\eqref{6.10} with  \eqref{4.94} and \eqref{4.95} implies
fulfillment of the conditions \eqref{4.61.1} and \eqref{4.61.2}
for $y_f$.

Thus $y_f(\cd,\l)$ is a solution of the boundary value problem
\eqref{4.2}--\eqref{4.5} in \emph  {Case 1} (resp.
\eqref{4.59}--\eqref{4.61.2} in \emph{Case 2} ) and by Theorems
\ref{th4.2} and \ref{th4.8} relations \eqref{6.4} hold (for $\wt
f\in\gH_b$). Finally, one proves \eqref{6.4} for arbitrary $\wt
f\in \gH $ in the same way as in \cite[Theorem 6.2]{Mog13.1}.
\end{proof}
\begin{remark}\label{rem6.3}
Theorem \ref{th6.2}  is a generalization of similar results
obtained in \cite{DLS93,HinSch93,Kra89} for Hamiltonian systems
(i.e., for systems \eqref{3.1} with $\wh H=\{0\}$). Moreover, for
non-Hamiltonian systems \eqref{3.1} in the case of minimally
possible deficiency indices $n_\pm(\Tmi)=\nu_\pm$ formulas
\eqref{6.1} and \eqref{6.2} were proved in \cite{HinSch06}.
\end{remark}
\subsection{Spectral functions and the Fourier transform}
Let $T$ be a symmetric relation in $\gH$ defined by \eqref{3.39}
in \emph {Case 1} and \eqref{3.43} in \emph{Case 2}  and let
$\pair $ be a boundary parameter given by \eqref{4.1} in \emph
{Case 1} and \eqref{4.54} in \emph{Case 2} . According to Theorems
\ref{th4.2} and \ref{th4.8} the boundary value problems
\eqref{4.2}--\eqref{4.5}  and \eqref{4.59}--\eqref{4.61.2}
establish a bijective correspondence between boundary parameters
$\tau$ and generalized resolvents $R(\cd)=R_\tau(\cd)$ of $T$.
Denote by $\wt T^\tau$ the ($\gH$-minimal) exit space self-adjoint
extension of $T$ in the Hilbert space $\wt\gH\supset\gH$
generating $R_\tau(\cd)$:
\begin {equation} \label{6.17}
R_\tau(\l)=P_\gH (\wt T^\tau-\l)^{-1}\up \gH, \quad \l\in\CR.
\end{equation}
Clearly, formula \eqref{6.17} gives a parametrization of all such
extensions $\wt T=\wt T^\tau$ by means of a boundary parameter
$\tau$. Denote  by $F_\tau(\cd)$ the  spectral function of $T$
corresponding to $\wt T^\tau$.

In the following we assume that  a certain  $J$-unitary extension
$\wt U$ of $U$ is fixed and hence the $m$-function $m_\tau(\cd)$
is defined (see \eqref{5.0}).  Note that in view of the assertion
just after \eqref{5.1} a choice of $\wt U$ does not matter in our
further considerations.

For each $\wt f\in\gH_b$ introduce the Fourier transform $\wh
f(\cd):\bR\to H_0$ by setting
\begin {equation} \label{6.18}
\wh f(s)=\int_\cI \f_U^*(t,s)\D(t)f(t)\, dt, \quad f\in \wt f.
\end{equation}

\begin{definition}\label{def6.4}
A distribution function $\Si_\tau(\cd):\bR\to [H_0]$ is called a
spectral function of the boundary value problem
\eqref{4.2}--\eqref{4.5} in \emph  {Case 1} (resp.
\eqref{4.59}--\eqref{4.61.2} in  \emph{Case 2} ) if, for each $\wt
f\in\gH_b$ and for each finite interval $[\a,\b) \subset \bR$, the
Fourier transform \eqref{6.18} satisfies the equality
\begin {equation} \label{6.19}
((F_\tau(\b)-F_\tau(\a))\wt f,\wt
f)_\gH=\int_{[\a,\b)}(d\Si_\tau(s)\wh f(s),\wh f(s)).
\end{equation}
\end{definition}
In the following   "the boundary value problem"  means either  the
boundary value problem \eqref{4.2}--\eqref{4.5} (in \emph  {Case
1})  or the boundary value problem \eqref{4.59}--\eqref{4.61.2}
(in \emph{Case 2} ).

Let $\St$ be the spectral function of the boundary value problem.
Then in view of \eqref{6.19} $\wh f\in \LS$ and the same
reasonings as in \cite{Mog13.1} give the Bessel inequality $||\wh
f||_{\LS}\leq ||\wt f||_\gH$ (for the Hilbert space $L^2(\Si;\cH)$
see Subsection \ref{sub2.1a}). Therefore for each $\wt f\in\gH$
there exists a function $\wh f\in\LS$ (the Fourier transform of
$\wt f$) such that
\begin {equation*}
\lim\limits_{\b\uparrow b}||\wh f - \int_a^\b
\f_U^*(t,\cd)\D(t)f(t)\, dt
 ||_{\LS}=0, \quad f\in\wt f,
\end{equation*}
and the equality $V\wt f=\wh f, \; \wt f\in\gH,$ defines the
contraction $V: \gH\to \LS$
\begin{theorem}\label{th6.5}
For each boundary parameter $\tau$ there exists a unique spectral
function $\Si_\tau(\cd)$ of the boundary value problem.  This
function is defined by the Stieltjes inversion formula
\begin {equation} \label{6.24}
\Si_\tau(s)=-\lim\limits_{\d\to+0}\lim\limits_{\varepsilon\to +0}
\frac 1 \pi \int_{-\d}^{s-\d}\im \,m_\tau(\s-i\varepsilon)\, d\s.
\end{equation}
If in addition $\tau$ is a truncated boundary parameter, then the
corresponding spectral function $\Si_{\tau}(\cd)$ has the block
matrix representation
\begin {equation} \label{6.25}
\Si_{\tau}(s)=\begin{pmatrix} \Si_{\tau,1}(s) & \Si_{\tau, 2}(s)
\cr \Si_{\tau, 3}(s) & \tfrac 1 {2\pi} s I \end{pmatrix}, \quad
s\in\bR
\end{equation}
with respect to the decomposition $H_0=H_0'\oplus \wh H_2$ in Case
1 (resp. $H_0=H\oplus\wh H$ in  Case 2). In \eqref{6.25}
$\Si_{\tau,1}(\cd)$ is an $[H_0']$-valued (resp. $[H]$-valued)
distribution function defined by
\begin {equation} \label{6.26}
\Si_{\tau,1}(s)=-\lim\limits_{\d\to+0}\lim\limits_{\varepsilon\to
+0} \frac 1 \pi \int_{-\d}^{s-\d}\im \,m_{\tau,
1}(\s-i\varepsilon)\, d\s
\end{equation}
with $m_{\tau, 1}(\cd)$ taken from \eqref{5.5}
\end{theorem}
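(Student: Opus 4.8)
The plan is to reduce the whole statement to the Stieltjes inversion of the Nevanlinna function $m_\tau(\cd)$ by expressing the sesquilinear form of the generalized resolvent through the $m$-function and the Fourier transform. First I would fix $\wt f\in\gH_b$ with $f\in\wt f$, so that the relevant integrals run over a compact part of $\cI$ and depend analytically on $\l$. Using the Green's function representation \eqref{6.2}, \eqref{6.1} together with the factorization $v_\tau(t,\l)=\f_U(t,\l)m_\tau(\l)+\psi(t,\l)$ from \eqref{5.3} and the symmetry $m_\tau^*(\ov\l)=m_\tau(\l)$ of Proposition \ref{pr5.6}, I would split the kernel as $G_\tau(x,t,\l)=\f_U(x,\l)m_\tau(\l)\f_U^*(t,\ov\l)+G_\tau^0(x,t,\l)$, where the ``free'' part $G_\tau^0$ is built only from $\f_U$ and $\psi$. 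Writing $\wh f(\mu)=\int_\cI\f_U^*(t,\mu)\D(t)f(t)\,dt$ (so that $\wh f(s)$ for real $s$ is the Fourier transform \eqref{6.18}), integrating both sides against $\D f$ yields the key identity
\[
(R_\tau(\l)\wt f,\wt f)_\gH=(m_\tau(\l)\wh f(\ov\l),\wh f(\l))_{H_0}+g(\l),\quad\l\in\CR,
\]
in which $g(\l):=(R_\tau^0(\l)\wt f,\wt f)_\gH$ is entire in $\l$ (the integrals are over the compact support, and $\f_U^*(t,\ov\l)$, $\psi^*(t,\ov\l)$ are holomorphic in $\l$) and, because $G_\tau^0(x,t,\l)^*=G_\tau^0(t,x,\ov\l)$, satisfies $\ov{g(\l)}=g(\ov\l)$; hence $g$ is real on $\bR$.

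Next I would note that $\Si_\tau$ defined by \eqref{6.24} is a genuine distribution function precisely because $m_\tau(\cd)$ is Nevanlinna (Proposition \ref{pr5.6}), so the double limit exists and produces a nondecreasing, left-continuous function. The content is then to verify \eqref{6.19}. For this I invoke the Stieltjes--Liv\v{s}ic inversion formula for the $\gH$-minimal self-adjoint extension $\Tt$ generating $R_\tau(\cd)$. Since $F_\tau(t)=P_\gH E(t)\up\gH$ and $(R_\tau(\l)\wt f,\wt f)_\gH=((\Tt-\l)^{-1}\wt f,\wt f)_{\wt\gH}$ for $\wt f\in\gH$ by \eqref{6.17}, the inversion formula reads
\[
((F_\tau(\b)-F_\tau(\a))\wt f,\wt f)_\gH=-\lim_{\d\to+0}\lim_{\varepsilon\to+0}\frac1\pi\int_{\a+\d}^{\b-\d}\im\,(R_\tau(\s-i\varepsilon)\wt f,\wt f)_\gH\,d\s,
\]
the minus sign and the evaluation in $\bC_-$ coming from $R_\tau(\ov\l)=R_\tau^*(\l)$. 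Substituting the identity above, the contribution of $g$ drops out: $g$ is entire and real on $\bR$, so $\im\,g(\s-i\varepsilon)\to0$ locally uniformly on $[\a+\d,\b-\d]$ and its integral vanishes in the limit. For the remaining term I would use continuity of $\wh f(\cd)$ to replace $\wh f(\ov\l),\wh f(\l)$ by $\wh f(\s)$ and apply the operator Stieltjes inversion, obtaining exactly $\int_{[\a,\b)}(d\Si_\tau(s)\wh f(s),\wh f(s))$ with $\Si_\tau$ given by \eqref{6.24}. This proves existence and the formula. Uniqueness follows because, by definiteness of the system, the Fourier transforms $\wh f$ arising from $\wt f\in\gH_b$ are rich enough to separate distribution functions: if two spectral functions both satisfy \eqref{6.19}, their difference integrates to zero against every $(\cd\,\wh f(s),\wh f(s))$, whence it vanishes.

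Finally, for a truncated boundary parameter I would feed the triangular form \eqref{5.5} of $m_\tau(\cd)$, supplied by Theorem \ref{th5.4}(2), into \eqref{6.24}. Computing $\im\,m_\tau$ blockwise with respect to $H_0=H_0'\oplus\wh H_2$ in \Ca{1} (resp.\ $H_0=H\oplus\wh H$ in \Ca{2}), the $(1,1)$-block reproduces \eqref{6.26}, while the constant $(2,2)$-entry $-\tfrac i2 I$ has $\im(-\tfrac i2 I)=-\tfrac12 I$, so its inversion gives $-\tfrac1\pi\int_{-\d}^{s-\d}(-\tfrac12 I)\,d\s=\tfrac1{2\pi}sI$; this yields the block representation \eqref{6.25}.

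I expect the main obstacle to be the rigorous passage to the limit in the second paragraph: justifying that $-\tfrac1\pi\int_{\a+\d}^{\b-\d}\im\,(m_\tau(\s-i\varepsilon)\wh f(\s+i\varepsilon),\wh f(\s-i\varepsilon))\,d\s$ converges to $\int_{[\a,\b)}(d\Si_\tau(s)\wh f(s),\wh f(s))$ with the $s$-dependent vector $\wh f(s)$. One must control the replacement of $\wh f(\s\pm i\varepsilon)$ by $\wh f(\s)$ uniformly (the error is $O(\varepsilon)$ times the possibly singular norm of $m_\tau(\s-i\varepsilon)$) and combine it with the vague convergence of the operator measures $-\tfrac1\pi\im\,m_\tau(\s-i\varepsilon)\,d\s\to d\Si_\tau$ on $[\a+\d,\b-\d]$. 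This is exactly the step carried out, in the companion case $n_-(\Tmi)\leq n_+(\Tmi)$, in \cite{Mog13.1}, and the argument there transfers with only notational changes.
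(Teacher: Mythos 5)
Your proposal is correct and follows essentially the same route as the paper, whose proof consists precisely in combining the Green's-function representation of Theorem \ref{th6.2} with the Stieltjes--Liv\v{s}ic inversion formula (with the details as in \cite[Theorem 6.5]{Mog13.1}) and in deducing the truncated case from \eqref{6.24} together with the block form \eqref{5.5} of $m_\tau(\cd)$, exactly as you do. One small normalization point: to capture the half-open interval $[\a,\b)$ under the left-continuous convention for $F_\tau$, your inversion integral should run over $[\a-\d,\b-\d]$ rather than $[\a+\d,\b-\d]$, in accordance with the limits $\int_{-\d}^{s-\d}$ appearing in \eqref{6.24}.
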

\begin{proof}
In the case of an arbitrary boundary parameter $\tau$ one proves
the required statement by using Theorem \ref{th6.2} and the
Stieltjes-Liv\u{s}ic inversion formula  in the same way as in
\cite[Theorem 6.5]{Mog13.1}. The statements of the theorem for a
truncated boundary parameter $\tau$ are implied by \eqref{6.24}
and the block matrix representation \eqref{5.5} of
$m_{\tau}(\cd)$.
\end{proof}

Let $\gH$ be decomposed as
\begin {equation}\label{6.27}
\gH=\gH_0\oplus \mul T
\end{equation}
and let $T'$ be the operator part of $T$. Moreover, let $\tau$ be
a boundary parameter, let $\wt T^{\tau}=(\wt T^{\tau})^*$ be an
exit space extension of $T$ in the Hilbert space $\wt\gH\supset
\gH$, let $\wt\gH$ be decomposed as
\begin {equation}\label{6.28}
\wt\gH=\wt\gH_0\oplus \mul \wt T^\tau
\end{equation}
and let $T^\tau$ be the operator part of $\wt T^{\tau}$ (so that
$T^\tau$ is a self-adjoint operator in $\wt \gH_0$). Assume also
that $\St$ is a spectral function of the boundary value problem
and $V\in [\gH,\LS]$ is the Fourier transform. As in
\cite{Mog13.1} one proves that $V\up \mul T=0$. Therefore by
\eqref{6.27} $\gH_0$ is the maximally possible subspace of $\gH$
on which
 $V$ may by isometric.
\begin{definition}\label{def6.7}
A spectral function $\St$ is referred to the class $SF_0$ if the
operator $V_0:=V\up\gH_0$ is an isometry from $\gH_0$ to $\LS$.
\end{definition}
Similarly to \cite{Mog13.1} the following  equivalence is valid:
\begin {equation}\label{6.29}
\St\in SF_0\iff \mul\Tt=\mul T.
\end{equation}
Hence  all spectral functions $\St$ of the boundary value problem
belong to $SF_0$ if and only if $\mul T=\mul T^*$ or,
equivalently, if and only if the operator $T'$ is densely defined.

As in \cite{Mog13.1} one proves the following statement: if
$\St\in SF_0$, then for each $\wt f\in \gH_0$ the inverse Fourier
transform is
\begin {equation}\label{6.30}
\wt f=\pi\left(\int_\bR \f_U(\cd,s)\,d\Si_\tau(s) \wh f(s)\right),
\end{equation}
where the integral converges in the semi-norm of $\lI$.

The following theorem can be proved in the same way as Theorem 6.9
in \cite{Mog13.1}.
\begin{theorem}\label{th6.9}
Let $\tau$ be a boundary parameter such that $\St\in SF_0$ and let
$V$ be the corresponding Fourier transform. Then $\gH_0\subset
\wt\gH_0$ and there exists a unitary extension  $\wt V\in
[\wt\gH_0, \LS]$ of the isometry $V_0(=V\up\gH_0)$ such that  the
operator $T^\tau$ and the multiplication operator
$\L=\L_{\Si_\tau}$ in $\LS$ are unitarily equivalent by means of
$\wt V$.

Moreover, if $\mul T=\mul T^* $, then the statements of the
theorem hold for arbitrary boundary parameter $\tau$ (that is, for
any spectral function $\St$).
 \end{theorem}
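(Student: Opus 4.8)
The plan is to construct the desired isometry $\wt V$ explicitly on a dense subspace of $\wt\gH_0$ built from the resolvent of $\Tt$, then verify that it is isometric, intertwines the operators, and is onto. First I would record the inclusion $\gH_0\subset\wt\gH_0$: by the equivalence \eqref{6.29} the hypothesis $\St\in SF_0$ gives $\mul\Tt=\mul T$, so $\mul\Tt\subset\gH$, and therefore $\gH_0=\gH\ominus\mul T$ consists of vectors of $\wt\gH$ orthogonal to $\mul\Tt$, i.e. $\gH_0\subset\wt\gH\ominus\mul\Tt=\wt\gH_0$. The same identity yields $P_{\wt\gH_0}\wt f=\wt f-P_{\mul T}\wt f\in\gH_0$ for every $\wt f\in\gH$, and as $\wt f$ runs through $\gH$ the vector $P_{\wt\gH_0}\wt f$ runs through all of $\gH_0$.

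Since $\Tt$ is $\gH$-minimal and $\mul\Tt=\mul T$, the set $\{(T^\tau-\l)^{-1}\wt f:\wt f\in\gH_0,\ \l\in\CR\}\cup\gH_0$ spans a dense linear manifold $\cD$ in $\wt\gH_0$; here $(T^\tau-\l)^{-1}\wt f=(\Tt-\l)^{-1}\wt f$ for $\wt f\in\gH_0$, because the resolvent of the relation annihilates $\mul\Tt$. I would define $\wt V$ on $\cD$ by $\wt V\wt f=V_0\wt f=\wh f$ for $\wt f\in\gH_0$ and $\wt V(T^\tau-\l)^{-1}\wt f=(s-\l)^{-1}\wh f(s)$ (a genuine element of $\LS$, since $\wh f\in\LS$ and $(s-\l)^{-1}$ is bounded for $\l\in\CR$), extending by linearity.

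To see that $\wt V$ is a well-defined isometry I would compute all Gram inner products on $\cD$. Writing $E(\cd)$ for the orthogonal spectral function of $\Tt$, one has $d(E(s)\wt f,\wt g)_{\wt\gH}=d(F_\tau(s)\wt f,\wt g)_\gH$ for $\wt f,\wt g\in\gH_0$, which by the defining relation \eqref{6.19} equals $(d\Si_\tau(s)\wh f(s),\wh g(s))$. Substituting this into
\[
((T^\tau-\l)^{-1}\wt f,(T^\tau-\mu)^{-1}\wt g)_{\wt\gH_0}=\int_\bR\frac{1}{(s-\l)(s-\ov\mu)}\,d(E(s)\wt f,\wt g),
\]
and into the analogous mixed term $((T^\tau-\l)^{-1}\wt f,\wt g)$ and pure term $(\wt f,\wt g)_\gH$, then comparing with the corresponding integrals in $\LS$ (recall $\ov{s-\mu}=s-\ov\mu$ for real $s$), shows that $\wt V$ preserves all these inner products; the pure term uses precisely the isometry of $V_0$ guaranteed by $\St\in SF_0$. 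Hence $\wt V$ extends to an isometry $\wt V\in[\wt\gH_0,\LS]$, and by construction $\wt V(T^\tau-\l)^{-1}=(\L-\l)^{-1}\wt V$ on $\cD$, so by continuity this intertwining of resolvents holds on all of $\wt\gH_0$, with $\L=\L_{\Si_\tau}$ the self-adjoint multiplication operator of Theorem \ref{th2.1}.

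It remains to prove $\ran\wt V=\LS$, and this I expect to be the main obstacle. Since $\wt V(\cD)=\text{span}\{\wh f,\ (s-\l)^{-1}\wh f:\wt f\in\gH_0,\ \l\in\CR\}$, surjectivity amounts to density of this manifold. I would argue that any $g\in\LS$ orthogonal to it makes the Cauchy transform $\l\mapsto\int_\bR(s-\l)^{-1}(d\Si_\tau(s)\wh f(s),g(s))$ vanish on $\CR$, so by the Stieltjes inversion the scalar measure $(d\Si_\tau(s)\wh f(s),g(s))$ vanishes identically for every $\wt f\in\gH_0$; the definiteness of the system, together with the fact that $\Si_\tau$ is recovered from $m_\tau$ through the inversion formula \eqref{6.24}, then forces $g=0$ in $\LS$, exactly as in \cite[Theorem~6.9]{Mog13.1}. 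This gives that $\wt V$ is unitary, whence the resolvent intertwining makes $T^\tau$ and $\L$ unitarily equivalent by means of $\wt V$. Finally, if $\mul T=\mul T^*$, then by the concluding paragraph of Subsection~\ref{sub2.4} every exit space extension satisfies $\mul\Tt=\mul T$, so \eqref{6.29} gives $\St\in SF_0$ for every boundary parameter $\tau$ and the preceding argument applies verbatim.
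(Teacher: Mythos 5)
Your proposal is correct and takes essentially the same approach as the paper: the paper itself gives no details, saying only that the theorem "can be proved in the same way as Theorem 6.9 in \cite{Mog13.1}", and your scheme --- the dense manifold $\cD$ spanned by $\gH_0$ and $(T^\tau-\l)^{-1}\gH_0$ (legitimate since the relation resolvent kills $\mul \wt T^\tau=\mul T$), the isometry check via $d(E(s)\wt f,\wt g)=(d\Si_\tau(s)\wh f(s),\wh g(s))$ from the polarized form of \eqref{6.19}, the resolvent intertwining $\wt V(T^\tau-\l)^{-1}=(\L-\l)^{-1}\wt V$, and surjectivity via Stieltjes inversion plus definiteness --- is precisely the standard argument of that reference, with the concluding case $\mul T=\mul T^*$ handled, as in the paper, through the remark at the end of Subsection \ref{sub2.4} and the equivalence \eqref{6.29}. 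The one step you defer to \cite{Mog13.1} (that $(d\Si_\tau(s)\wh f(s),g(s))\equiv 0$ for all $\wt f$ forces $g=0$ in $\LS$) is exactly the technical core the paper also outsources, so nothing essential is missing relative to the paper's own treatment.
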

Since in the case of a densely defined $T$ one has $\mul T=\mul
T^*=\{0\}$, the following theorem is implied by Theorem
\ref{th6.9}.
\begin{theorem}\label{th6.11}
If  $T$ is a densely defined operator, then for each boundary
parameter $\tau$ and the corresponding spectral function $\St$ the
following hold: {\rm (i)} $\Tt$ is an operator, that is, $\Tt=
T^\tau$;   {\rm (ii)} the  Fourier transform $V$ is an isometry;
{\rm (iii) } there exists a unitary extension $\wt V\in [\wt\gH,
\LS]$ of $V$ such that $\Tt$ and $\L$ are unitarily equivalent by
means of $\wt V$.
\end{theorem}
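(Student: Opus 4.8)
The plan is to obtain this theorem as a direct specialization of Theorem \ref{th6.9} to the situation in which the multivalued parts are trivial. First I would extract the two consequences of the hypothesis: since $T$ is an \emph{operator} we have $\mul T=\{0\}$, and since $T$ is \emph{densely defined} we have $\mul T^*=(\dom T)^\perp=\{0\}$. In particular $\mul T=\mul T^*$, so the ``moreover'' clause of Theorem \ref{th6.9} is applicable, and consequently every spectral function $\St$ of the boundary value problem belongs to the class $SF_0$.

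Next I would unwind the two orthogonal decompositions. Because $\mul T=\{0\}$, the decomposition \eqref{6.27} collapses to $\gH_0=\gH$, so the operator part $T'$ coincides with $T$ and the isometry $V_0=V\up\gH_0$ is nothing but $V$ itself; together with $\St\in SF_0$ this already yields statement {\rm (ii)}. For statement {\rm (i)} I would invoke the equivalence \eqref{6.29}: as $\St\in SF_0$ for every boundary parameter $\tau$ and $\mul T=\{0\}$, it follows that $\mul\Tt=\mul T=\{0\}$. Hence $\Tt$ is an operator, and the decomposition \eqref{6.28} reduces to $\wt\gH_0=\wt\gH$ with $\Tt=T^\tau$.

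Finally, for statement {\rm (iii)} I would apply the conclusion of Theorem \ref{th6.9}, which supplies a unitary extension $\wt V\in[\wt\gH_0,\LS]$ of $V_0$ intertwining the operator part $T^\tau$ with the multiplication operator $\L=\L_{\Si_\tau}$. Substituting the identifications $\wt\gH_0=\wt\gH$, $V_0=V$ and $T^\tau=\Tt$ established above turns this into a unitary $\wt V\in[\wt\gH,\LS]$ that extends $V$ and implements the unitary equivalence of $\Tt$ and $\L$, which is exactly {\rm (iii)}. Since all three assertions are read off from Theorem \ref{th6.9} once the multivalued parts are shown to vanish, there is no genuine obstacle here; the only point requiring care is the bookkeeping of the decompositions \eqref{6.27} and \eqref{6.28}, namely checking that the triviality of $\mul T$ and of $\mul\Tt$ really does identify the operator-part spaces $\gH_0,\wt\gH_0$ with the full spaces $\gH,\wt\gH$, so that the objects in Theorem \ref{th6.9} coincide with those in the present statement.
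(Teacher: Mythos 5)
Your proposal is correct and follows exactly the paper's route: the paper likewise observes that a densely defined operator $T$ satisfies $\mul T=\mul T^*=\{0\}$ and then reads the theorem off from Theorem \ref{th6.9}. Your additional bookkeeping (collapsing the decompositions \eqref{6.27}, \eqref{6.28} and invoking \eqref{6.29} to get $\mul\Tt=\{0\}$) just makes explicit what the paper leaves implicit, so there is nothing to correct.
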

Since $n_+(\Tmi)\neq n_-(\Tmi)$, the equality $\s (T^\tau)=\bR$
holds for any boundary parameter $\tau$. Moreover, Theorem
\ref{th6.9}  yields the following corollary.
\begin{corollary}\label{cor6.12}
{\rm (1)} If $\tau $ is a boundary parameter such that $\St \in
SF_0$, then the spectral multiplicity of the operator $T^\tau$
does not exceed $\nu_-(=\dim H_0)$.

{\rm (2)} If $\mul T=\mul T^* $, then the above statement about
the spectral multiplicity of $T^\tau$ holds for any boundary
parameter $\tau$.
\end{corollary}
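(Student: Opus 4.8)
The plan is to deduce both assertions from Theorem~\ref{th6.9} together with the elementary fact that multiplication by the independent variable in a space $\LS$ with finite-dimensional $H_0$ is a self-adjoint operator of spectral multiplicity at most $\dim H_0$. First I would treat part~(1). Since $\St\in SF_0$, Theorem~\ref{th6.9} applies directly and furnishes a unitary operator $\wt V\in [\wt\gH_0,\LS]$ that implements a unitary equivalence between the operator part $T^\tau$ of $\Tt$ and the multiplication operator $\L=\L_{\Si_\tau}$ in $\LS$. As unitary equivalence preserves the spectral multiplicity function, it then suffices to bound the multiplicity of $\L$.

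The key (and essentially the only substantial) step is the observation that the multiplication operator $\L_{\Si_\tau}$ in $\LS$ has spectral multiplicity not exceeding $\dim H_0=\nu_-$. This is the standard model-theoretic statement that multiplication by the independent variable in $L^2(\Sigma;\cH)$ with $\dim\cH<\infty$ realizes a self-adjoint operator whose spectral multiplicity is at most $\dim\cH$ (cf.\ the discussion of these spaces in \cite[Section 13.5]{DunSch}, as already used for Theorem~\ref{th2.1}). Combining this estimate with the unitary equivalence provided by Theorem~\ref{th6.9} yields at once the bound in~(1).

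For part~(2) the only additional ingredient is the second assertion of Theorem~\ref{th6.9}: when $\mul T=\mul T^*$, the unitary equivalence of $T^\tau$ with $\L_{\Si_\tau}$ holds for \emph{every} boundary parameter $\tau$, not merely for those satisfying $\St\in SF_0$. Hence the multiplicity estimate established in~(1) carries over verbatim to an arbitrary $\tau$. I do not anticipate any genuine obstacle here, since the whole analytic content has already been packaged into Theorem~\ref{th6.9}, and what remains is the textbook multiplicity bound for multiplication operators on $\LS$.
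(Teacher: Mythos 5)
Your proof is correct and follows the paper's own route exactly: the paper derives Corollary~\ref{cor6.12} directly from Theorem~\ref{th6.9}, using precisely the unitary equivalence of $T^\tau$ with the multiplication operator $\L_{\Si_\tau}$ in $\LS$ and the standard fact that such a multiplication operator has spectral multiplicity at most $\dim H_0=\nu_-$, with part (2) obtained from the ``moreover'' clause of Theorem~\ref{th6.9} (equivalently, from the fact that $\mul T=\mul T^*$ forces $\St\in SF_0$ for every $\tau$). No gaps; your identification of $T^\tau$ as the operator part of $\wt T^\tau$ acting in $\wt\gH_0$ is also the correct reading.
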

In the case of the truncated boundary parameter $\tau$ there is a
somewhat more information about the spectrum of $T^{\tau}$.
Namely, the following theorem is valid.
\begin{theorem}\label{th6.13}
{\rm (1)} Let $\tau$ be a truncated boundary parameter such that
$\Si_{\tau}(\cd)\in SF_0$ and let $\Si_{\tau,1}(\cd)$ be a
distribution function given by the block matrix representation
\eqref{6.25} of $\Si_{\tau}(\cd)$. Moreover, let $E(\cd)$ be the
orthogonal spectral measure of the operator $T^{\tau} $ and let
$E_s(\cd)$ be the singular part of $E(\cd)$. Then
\begin {equation*}
\s_{ac}(T^{\tau})=\bR, \qquad \s_{s}(T^{\tau})=S_s(\Si_{\tau,1})
\end{equation*}
and the  multiplicity  of the orthogonal spectral measure
$E_s(\cd)$ does not exceed $\nu_+ + \nu_{b+}-\nu_{b-}(=\dim H_0')$
in  Case 1 and $\nu_+ (=\dim H)$ in  Case 2 (this implies that the
same estimates are valid for
 multiplicity of each eigenvalue  $\l_0$ of $T^{\tau} $).

{\rm (2)} If $\mul T=\mul T^*$, then the above statements about
the spectrum of $T^{\tau} $ hold for each truncated boundary
parameter $\tau$.
\end{theorem}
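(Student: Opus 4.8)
The plan is to derive every assertion from the unitary equivalence supplied by Theorem \ref{th6.9} together with the spectral description of the multiplication operator in Theorem \ref{th2.1}. Since $\Si_\tau(\cd)\in SF_0$, Theorem \ref{th6.9} furnishes a unitary $\wt V$ implementing a unitary equivalence between the operator part $T^\tau$ of $\wt T^\tau$ and the multiplication operator $\L=\L_{\Si_\tau}$ in $\LS$. Hence $T^\tau$ and $\L$ share all spectral types, and by Theorem \ref{th2.1} one gets $\s_{ac}(T^\tau)=S_{ac}(\Si_\tau)$ and $\s_s(T^\tau)=S_s(\Si_\tau)$; moreover the orthogonal spectral measure $E(\cd)$ of $T^\tau$ is carried by $\wt V$ onto that of $\L$, so multiplicity questions for $E(\cd)$ reduce to the analysis of the matrix measure $d\Si_\tau$.

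Next I would exploit the triangular block form \eqref{6.25}, valid for a truncated $\tau$. Writing the scalar trace measure $\mu:={\rm tr}\,\Si_\tau$ and $d\Si_\tau=\Psi(s)\,d\mu$ with $\Psi(s)\geq 0$, the lower-right block of $\Psi$ is (up to the a.c.\ factor) the density of $\tfrac1{2\pi}sI$, which is purely absolutely continuous with everywhere positive density. Consequently $S_{ac}(\Si_\tau)=\bR$, and the equality $\s_{ac}(T^\tau)=\bR$ follows at once. For the singular part I would use positivity of $\Psi$: on the singular support of $\mu$ the $(2,2)$-block of $\Psi$ vanishes, since the $(2,2)$-entry of $\Si_\tau$ has no singular component, and for a positive semidefinite matrix a vanishing diagonal block forces the corresponding off-diagonal blocks to vanish as well. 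Thus $\mu_s$-a.e.\ the density $\Psi$ is concentrated in the $(1,1)$-block, whence the singular part of $d\Si_\tau$ coincides with the singular part of the $[H_0']$-valued (resp.\ $[H]$-valued) measure $d\Si_{\tau,1}$. This gives $\s_s(T^\tau)=S_s(\Si_{\tau,1})$.

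The main obstacle is the multiplicity bound for $E_s(\cd)$. Here I would argue that, because on the singular set the density $\Psi$ takes values in the $(1,1)$-block, the singular part of the space $\LS$ is unitarily equivalent to $L^2((\Si_{\tau,1})_s;H_0')$ in Case 1 (resp.\ $L^2((\Si_{\tau,1})_s;H)$ in Case 2); equivalently, the pointwise rank of the matrix density governing the singular part does not exceed $\dim H_0'$ (resp.\ $\dim H$). Since the multiplicity function of a multiplication operator equals the pointwise rank of its matrix density, this bounds the multiplicity of $E_s(\cd)$ by $\nu_++\nu_{b+}-\nu_{b-}=\dim H_0'$ in Case 1 and by $\nu_+=\dim H$ in Case 2; in particular each eigenvalue $\l_0$, being an atom of $E_s(\cd)$, inherits the same bound. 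The delicate point is to make precise the reduction of the singular part of $\LS$ to the smaller $L^2$-space, controlling the off-diagonal blocks $\Si_{\tau,2},\Si_{\tau,3}$ on the singular support; this is exactly where the positivity argument of the previous paragraph is invoked once more.

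Finally, statement (2) is immediate: if $\mul T=\mul T^*$, then by the equivalence \eqref{6.29} every spectral function $\Si_\tau(\cd)$ belongs to $SF_0$, so part (1) applies to every truncated boundary parameter $\tau$ without the additional hypothesis $\Si_\tau(\cd)\in SF_0$.
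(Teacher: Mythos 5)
Your proposal is correct and follows essentially the same route as the paper: the paper likewise combines the unitary equivalence of Theorem \ref{th6.9} with Theorem \ref{th2.1} and observes that, by \eqref{6.25}, the singular part of $\Si_\tau(\cd)$ is concentrated in the $(1,1)$-block, i.e.\ $\Si_{\tau,s}(s)=\mathrm{diag}\,(\Si_{\tau 1,s}(s),0)$, while the absolutely continuous part carries the off-diagonal entries and the block $\tfrac{1}{2\pi}sI$, from which both spectral identities and the multiplicity bound for $E_s(\cd)$ follow, and part (2) is deduced from \eqref{6.29} exactly as you do. The only difference is one of detail: the paper asserts the block decomposition of $\Si_{\tau,ac}$ and $\Si_{\tau,s}$ without justification, whereas you spell out the underlying positivity argument (a vanishing diagonal block of the trace-measure density forces the off-diagonal blocks to vanish $\mu_s$-a.e.), which is a worthwhile elaboration rather than a deviation.
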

\begin{proof}
(1) let $\Si_{\tau,ac}(\cd), \;\Si_{\tau,s}(\cd) $ and $\Si_{\tau
1,ac}(\cd), \;\Si_{\tau 1,s} (\cd)$ be absolutely continuous and
singular parts of $\Si_{\tau} (\cd)$ and $\Si_{\tau,1}(\cd)$
respectively. Then in view of \eqref{6.25} the block matrix
representations
\begin {equation*}
\Si_{\tau,ac}(s)=\begin{pmatrix} \Si_{\tau 1,ac}(s) &
\Si_{\tau,2}(s) \cr \Si_{\tau,3}(s) & \tfrac 1 {2\pi}s I
\end{pmatrix}, \qquad \Si_{\tau,s}(s)=\begin{pmatrix}
\Si_{\tau 1,s} (s) & 0 \cr 0 & 0
\end{pmatrix}
\end{equation*}
hold with respect to the same decompositions of $H_0$ as in
Theorem
 \ref{th6.5}. This and Theorems \ref{th6.9} and \ref{th2.1} give the required
statements about the spectrum of $T^{\tau}$.

The statement (2) of the theorem follows from the fact that in the
case $\mul T=\mul T^*$ the inclusion $\St\in SF_0$ holds for any
boundary parameter $\tau$ and hence for any truncated boundary
parameter $\tau$.
\end{proof}

In the next theorem we give a parametrization of all spectral
functions $\St$ immediately in terms of the boundary parameter
$\tau$.
\begin{theorem}\label{th6.14}
Let $X_-(\cd)$ be the operator matrix \eqref{4.17} in  Case 1
(resp. \eqref{4.81} in  Case 2). Then for each boundary parameter
$\pair$ of the form \eqref{4.1} in  Case 1 (resp. \eqref{4.54} in
 Case 2) the equality
\begin {equation}\label{6.31}
m_\tau(\l)=m_0(\l)+\F_-(\l)(D_0(\l)-D_1(\l)\dot M_-(\l))^{-1}
D_1(\l) \Psi_-(\l), \quad\l\in\bC_-
\end{equation}
together with \eqref{6.24} defines a (unique) spectral function
$\St$ of the boundary value problem. If in addition $\pair$ is a
truncated boundary parameter defined by \eqref{4.1}, \eqref{4.1.1}
in Case 1 and \eqref{4.54}, \eqref{4.54.1} in Case 2, then the
distribution function $F_{\tau}(\cd)$ in \eqref{6.25} is defined
by \eqref{5.6} and \eqref{6.26}. Moreover, the following
statements are valid:

{\rm (1)} Let in Case 1 $\wh C_{02}(\l), \; C_{0b}(\l)$ and
$M_{4+}(\l), \; N_{2+}(\l) $ be defined by \eqref{4.1a} and
\eqref{4.28}  and let $P_{\cH_b}$ be the orthoprojector in $\wh
H_2\oplus\cH_b$ onto $\cH_b$. Then there exist the limits
\begin {gather*}
\cB_\tau:=\lim_{y\to +\infty} \tfrac 1 {i y}(C_{0b}(i y)-C_1(i y)
M_{4+}(i
y)+i\wh C_{02}(i y)N_{2+}(i y))^{-1} C_1(i y)=\qquad\qquad\\
=\lim_{y\to -\infty} \tfrac 1 {i y} P_{\cH_b}(D_0(i y)-D_1(i y
)\dot M_-(i y
))^{-1}D_1(i y)\qquad\qquad\qquad\qquad\quad\\
\wh\cB_\tau: =\lim_{y\to +\infty} \tfrac 1 {i y}M_{4+}(i
y)(C_{0b}(i y)-C_1(i y)M_{4+}(i y)+i\wh C_{02}(i y)N_{2+}(i
y))^{-1}C_{0b}(i y)\qquad
\end{gather*}
and the following equivalence holds:
\begin {equation}\label{6.32}
\St\in SF_0 \iff \cB_\tau=\wh \cB_\tau=0
\end{equation}

{\rm (2)} Let in  Case 2 $C_0(\l)$ and $\dot M_+(\l)$ has the
block representations (cf. \eqref{4.55} and \eqref{4.28})
\begin {gather*}
C_0(\l)=(C_{02}(\l),\, C_{0b}(\l)): (\wh H\oplus\wh \cH_b)\oplus
\cH_b \to
\cH_b, \quad \l\in\bC_+\\
\dot M_+(\l)=(N_{2+}(\l),\, N_{4+}(\l),\,
M_4(\l))^\top:\cH_b\to\wh H\oplus\wh \cH_b\oplus \cH_b,
\quad\l\in\bC_+
\end{gather*}
and let $N_+(\l)=(N_{2+}(\l),\, N_{4+}(\l))^\top \in [\cH_b,\wh
H\oplus\wh \cH_b ], \;\l\in\bC_+$. Then there exist limits
\begin {gather*}
\cB_\tau:=\lim_{y\to +\infty} \tfrac 1 {i y}(C_{0b}(i y)-C_1(i y)
M_{4}(i
y)+i C_{02}(i y)N_{+}(i y))^{-1} C_1(i y)=\qquad\qquad\\
=\lim_{y\to -\infty} \tfrac 1 {i y} P_{\cH_b}(D_0(i y)-D_1(i y
)\dot M_-(i y
))^{-1}D_1(i y)\qquad\qquad\qquad\qquad\quad\\
\wh\cB_\tau: =\lim_{y\to +\infty} \tfrac 1 {i y}M_{4}(i
y)(C_{0b}(i y)-C_1(i y)M_{4}(i y)+i C_{02}(i y)N_{+}(i
y))^{-1}C_{0b}(i y)\qquad
\end{gather*}
and the equivalence \eqref{6.32} holds.

{\rm (3)} Each spectral function $\St$ belongs to $SF_0$ if and
only if $\lim\limits_{y\to +\infty} \tfrac 1 {i y} M_{4+}(iy)=0$
in  Case 1 (resp. $\lim\limits_{y\to +\infty} \tfrac 1 {i y}
M_{4}(iy)=0$ in  Case 2) and
\begin {gather*}
\lim_{y\to-\infty} y(\im (\dot M_-(i y)h,h)-\tfrac 1 2
||P_2h||^2)=+\infty, \quad h\neq 0,
\end{gather*}
where $h\in \wh H_2\oplus \cH_b$ and $P_2$ is the orthoprojector
in  $\wh H_2\oplus \cH_b$ onto $\wh H_2$ in Case 1 (resp. $h\in
\wh H_2\oplus \wt\cH_b$ and $P_2$ is the orthoprojector in $\wh
H_2\oplus \wt\cH_b$ onto $\wh H_2\oplus\wh\cH_b$ in  Case 2).
\end{theorem}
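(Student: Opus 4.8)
The plan is to separate the representation formula from the $SF_0$-criteria, since the former is already essentially available. Formula \eqref{6.31} is identical with \eqref{5.4} of Theorem \ref{th5.4}, so by Theorem \ref{th6.5} the Stieltjes inversion \eqref{6.24} applied to $m_\tau(\cd)$ produces the unique spectral function $\St$ of the boundary value problem. For a truncated parameter, the triangular form \eqref{5.5} of Theorem \ref{th5.4}~(2) together with the block representation \eqref{6.25} of Theorem \ref{th6.5} identifies the $(1,1)$-block through \eqref{5.6} and \eqref{6.26}. Hence the whole first assertion is obtained by quoting Theorems \ref{th5.4} and \ref{th6.5}; no new work is needed here.

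For the equivalences in (1) and (2) I would begin with \eqref{6.29}, which reduces $\St\in SF_0$ to the single relational condition $\mul\Tt=\mul T$. The idea is to read this off the behaviour at $i\infty$ of the coupled parameter governing the $\gH$-minimal exit space extension. Since $\Tt$ is generated through the boundary triplet $\dot\Pi_-$ by $\tau=\{\tau_+,\tau_-\}$, the standard criterion relating $\mul\Tt$ to the strong limits of $-iy\,R_\tau(iy)$ as $y\to\pm\infty$, combined with the resolvent formula underlying Theorem \ref{th4.6} (in terms of the $\g$-fields $\dot\g_\pm$ and the Weyl functions $\dot M_\pm$), reduces $\mul\Tt=\mul T$ to the vanishing of the limits of $\tfrac 1{iy}(\tau_-(iy)+\dot M_-(iy))^{-1}$ as $y\to-\infty$ and of $\tfrac 1{iy}(\tau_-^*(\ov{iy})+\dot M_+(iy))^{-1}$ as $y\to+\infty$. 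Existence of these limits, which is itself part of the claim, I would obtain from the Nevanlinna properties of $\dot M_\pm$ (Proposition \ref{pr2.8}) and of the class $\wt R_-(\cd,\cd)$. I would then insert the block representations \eqref{4.1}, \eqref{4.55} of $\tau_\pm$ and \eqref{4.28} of $\dot M_\pm$ and apply the inversion identity $-(\tau_-(\l)+\dot M_-(\l))^{-1}=(D_0(\l)-D_1(\l)\dot M_-(\l))^{-1}D_1(\l)$ of \cite[Lemma 2.1]{MalMog02}, exactly as in the proof of Theorem \ref{th5.4}. Extracting the Schur complement block by block yields the $\cH_b$-diagonal limit $\cB_\tau$ and the coupling limit $\wh\cB_\tau$ obtained after eliminating the $\wh H_2$ (resp.\ $\wh H$) variable, and $\mul\Tt=\mul T$ becomes the vanishing of both. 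The coincidence of the $\bC_+$ and $\bC_-$ forms of these limits then follows from the symmetry relations \eqref{4.29}, in particular $\dot M_+^*(\ov\l)=\dot M_-(\l)$, together with the link between $\tau_+$ and $\tau_-^*$ intrinsic to $\wt R_-(\cd,\cd)$.

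For part (3) I would use the observation recorded just after Theorem \ref{th6.5}: every $\St$ lies in $SF_0$ precisely when $\mul T=\mul T^*$, i.e.\ when the operator part $T'$ is densely defined. It then remains to convert this algebraic condition into the stated analytic one. Through $\dot\Pi_-$ the equality $\mul T=\mul T^*$ is a limit-point type condition on the Weyl function $\dot M_-$; analysing its Nevanlinna integral representation and isolating the $-\tfrac i2 I_{\cH_2}$ contribution forced by the unequal deficiency indices (cf.\ \eqref{2.18}) gives the growth requirement $\lim_{y\to-\infty} y(\im(\dot M_-(iy)h,h)-\tfrac 12\|P_2h\|^2)=+\infty$ for $h\neq 0$, whereas the absence of a linear term in the $\bC_+$ block is exactly $\lim_{y\to+\infty}\tfrac 1{iy}M_{4+}(iy)=0$.

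The main obstacle will be the explicit Schur-complement computation in (1) and (2): one must check that the two blocks extracted from $\tfrac 1{iy}(\tau_-+\dot M_-)^{-1}$ coincide with $\cB_\tau$ and $\wh\cB_\tau$ and that their simultaneous vanishing is \emph{equivalent} to, not merely sufficient for, $\mul\Tt=\mul T$. A comparable difficulty in (3) is to show that the limit-point condition with the $\tfrac 12\|P_2h\|^2$ correction characterises $\mul T=\mul T^*$ rather than only implying one inclusion.
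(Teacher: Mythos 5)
Your proposal is correct in outline and, at the level of the reduction, follows the paper: the first assertion is, as you say, an immediate combination of Theorems \ref{th5.4} and \ref{th6.5}, and statements (1)--(3) are reduced via the equivalence \eqref{6.29} to the condition $\mul\Tt=\mul T$ (and, for (3), to $\mul T=\mul T^*$), analysed through the boundary triplet $\dot\Pi_-$ of Propositions \ref{pr3.5} and \ref{pr3.6}. One step you use only implicitly and should make explicit is the identification of $\dot M_\pm$ as the Weyl functions of $\dot\Pi_-$, which the paper obtains from the block representations \eqref{4.22}, \eqref{4.23} (resp. \eqref{4.82}, \eqref{4.83}), \eqref{4.28} and Proposition \ref{pr2.10a}, (3). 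The substantive difference is in how the analytic criteria are handled: the paper performs no Schur-complement computation and no analysis of the Nevanlinna integral representation at all, but simply applies Theorems 4.12 and 4.13 of \cite{Mog13.2} to $\dot\Pi_-$ --- these cited theorems contain precisely the existence of the limits $\cB_\tau$ and $\wh\cB_\tau$, their equivalence (not mere sufficiency) with $\mul\Tt=\mul T$, the coincidence of the $\bC_+$ and $\bC_-$ expressions, and the growth condition on $\dot M_-$ together with $\lim_{y\to+\infty}\tfrac 1{iy}M_{4+}(iy)=0$ characterizing $\mul T=\mul T^*$. What you propose --- rederiving these from the strong-limit behaviour of $iyR_\tau(iy)$, the Krein-type resolvent formula in terms of $\dot\g_\pm$, $\dot M_\pm$, the inversion identity of \cite[Lemma 2.1]{MalMog02}, and the symmetry relations \eqref{4.29} --- is the standard route to such admissibility criteria (cf. \cite{DM00} in the equal-indices setting) and would work; but note that everything you flag as ``the main obstacle'' is exactly the content of the cited theorems, so a self-contained argument along your lines amounts to reproving the companion paper's results, whereas the intended proof of this theorem is essentially a citation plus the Weyl-function identification above.
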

\begin{proof}
The main statement of the theorem directly follows from Theorems
\ref{th5.4} and \ref{th6.5}.

Next, consider the boundary triplet $\dot\Pi_-=\{\dot\cH_{0}\oplus
\cH_{b}, \dot\G_0,\dot\G_1\}$ for $T^*$ defined in Propositions
\ref{pr3.5} and \ref{pr3.6}. Since the Weyl functions $M_\pm(\cd)$
of the decomposing boundary triplet $\Pi_-$ for $\Tma$ have the
block representations \eqref{4.22}, \eqref{4.23} in \emph  {Case
1} and \eqref{4.82}, \eqref{4.83} in \emph{Case 2} , it follows
from \eqref{4.28} and Proposition \ref{pr2.10a}, (3) that the Weyl
functions of the triplet $\dot\Pi_-$ are $\dot M_+ (\l)$ and $\dot
M_-(\l)$.  Now applying to the boundary triplet $\dot\Pi_-$
Theorems 4.12 and 4.13 from \cite{Mog13.2} and taking equivalence
\eqref{6.29} into account one obtains statements (1)--(3) of the
theorem.
\end{proof}
\subsection{The case of minimal deficiency indices}\label{sub5.4}
It follows from \eqref{3.27} that for a given system \eqref{3.1}
minimally possible deficiency indices of the linear relation
$\Tmi$ are
\begin {equation}\label{6.37}
n_+(\Tmi)=\nu_+, \qquad n_-(\Tmi)=\nu_-
\end{equation}
and the first (second) equality in \eqref{6.37} holds if and only
if $\nu_{b+}=0$ (resp. $\nu_{b-}=0$). If $n_+(\Tmi)=\nu_+$, then
by \eqref{3.27} one has $n_+(\Tmi)\leq n_-(\Tmi)$; moreover,
$n_+(\Tmi)< n_-(\Tmi)$ if in addition $\wh\nu >0(\Leftrightarrow
\wh H\neq \{0\})$.

In the case of the minimal  deficiency index $n_+(\Tmi)$ the above
results  can be somewhat simplified. Namely, assume the first
equality in \eqref{6.37}  and let $n_+(\Tmi)< n_-(\Tmi)$. Then the
equality $\nu_{b+}=0$ implies \emph{Case 2}. Moreover, Lemma
\ref{lem3.2a} gives $\cH_b=\{0\}$ and hence $\G_{0b}= \G_{1b}=0$.
Therefore $\wt\cH_b= \wh\cH_b$ and \eqref{3.35} yields
$\wt\G_{0b}=\wh\G_b$.

 If  the assumption (A1) from Subsection
\ref{sub4.1} is satisfied, then by Proposition \ref{pr3.6} the
equality
\begin {equation} \label{6.38}
T=\{\{\wt y, \wt f\}\in\Tma: \, \G_{1a}y=0, \;\wh\G_a y=0,\;
\wh\G_b y=0\}.
\end{equation}
defines a maximal symmetric relation $T$ in $\gH(=\LI)$ with the
deficiency indices $n_+(T)=0$ and $n_-(T)=\wh\nu+\wh\nu_{b-}$.
Moreover, in this case  $T$ coincides with the relation $A_0$
defined by \eqref{3.46}. Hence $T$ has a unique generalized
resolvent $R(\l)$ of the form \eqref{4.7}, which in view of
Theorem \ref{th4.8} and Remark \ref{rem4.9} is  given by the
boundary value problem
\begin{gather}
J y'-B(t)y=\l\D (t)y +\D(t) f(t), \quad t\in\cI,\label{6.39}\\
\G_{1a}y =0, \;\;\;\l\in\bC_+; \qquad \G_{1a}y =0, \; \wh\G_a
y=0,\; \wh\G_b y=0, \;\;\;\l\in\bC_-.\label{6.40}
\end{gather}

 If $\wt U$ is a $J$-unitary extension \eqref{3.17.5}
of $U$, then according to Proposition \ref{pr5.3} the $m$-function
$m(\cd)$ of the  problem \eqref{6.39}, \eqref{6.40} is given by
the relations
\begin{equation*}
v(t,\l):=\f_U (t,\l)m(\l)+\psi (t,\l)\in\lo{H_0},\;\;\;\l\in\CR
\end{equation*}
and  $ i\wh\G_a v(\l)=P_{\wh H},\;\;\wh\G_b v(\l)=0,
\;\;\l\in\bC_-$.

Since $\cH_b=\{0\}$, the decomposing boundary triplet
\eqref{3.36}--\eqref{3.38} for $\Tma$ takes the form
$\Pi_-=\{\cH_0\oplus H,\G_0,\G_1\}$, where
\begin {gather*}
\cH_0=H\oplus\wh H\oplus\wh\cH_b=H_0\oplus\wh\cH_b,\\
\G_0\{\wt y,\wt f\}=\{-\G_{1a}y, i \wh\G_a y,\wh\G_b y\}\,(\in
H\oplus \wh H\oplus \wh\cH_b), \quad \G_1\{\wt y,\wt f\}=\G_{0a}y,
\quad \{\wt y,\wt f\}\in \Tma.
\end{gather*}
Assume that the Weyl function $M_-(\cd)$ of $\Pi_-$ has the block
matrix representation (cf. \eqref{4.83})
\begin{gather}\label{6.41}
M_-(\l)=(M(\l), N_-(\l), M_-(\l)) :H\oplus \wh H\oplus\wh\cH_b\to
H , \quad\l\in\bC_-.
\end{gather}
According to Theorem \ref{th5.4} $m(\l)=m_0(\l)$ and by
\eqref{4.85} one has
\begin{gather}\label{6.42}
m(\l)=\begin{pmatrix} M(\l)  &   N_-(\l)  \cr 0 & -\tfrac i 2
I_{\wh H}\end{pmatrix}: \underbrace{H\oplus\wh H}_{H_0}\to
\underbrace{H\oplus\wh H}_{H_0}, \quad \l\in\bC_-.
\end{gather}
Let $\cM(\cd)$ be the operator function \eqref{2.17}, \eqref{2.18}
corresponding to  the decomposing boundary triplet $\Pi_-$. Then
by \eqref{6.41} and \eqref{6.42}
\begin {equation}\label{6.43}
\cM(\l)=\begin{pmatrix} m(\l)  &  \cM_1(\l)  \cr 0 & -\tfrac i 2
I_{\wh \cH_b}\end{pmatrix}: \underbrace{H_0\oplus\wh
\cH_b}_{\cH_0}\to\underbrace{H_0\oplus\wh \cH_b}_{\cH_0} , \quad
\l\in\bC_-,
\end{equation}
where $\cM_1(\l)=(M_-(\l),\, 0)^\top(\in [\wh\cH_b,H\oplus\wh
H])$. Using \eqref{6.43} and \eqref{2.18a} one can  show that
\begin{gather*}
m(\mu)-m^*(\l)=(\mu-\ov\l)\int_\cI v^*(t,\l)\D(t)v(t,\mu)\,dt,
\quad \mu,\l\in\bC_-.
\end{gather*}
The spectral function $\Si(\cd)$ of the problem \eqref{6.39},
\eqref{6.40} has the block matrix representation
\begin {equation*}
\Si (s)=\begin{pmatrix} F(s) & \Si_{1}(s) \cr \Si_{2}(s) & \tfrac
1 {2\pi} s I_{\wh H}\end{pmatrix}: H\oplus\wh H\to H\oplus\wh H,
\quad s\in\bR,
\end{equation*}
where $F(s)$ is an $[H]$-valued distribution function defined by
the Stieltjes formula \eqref{6.26} with $M(\l)$ in place of
$m_{\tau,1}(\l)$. Moreover, since $T$ is maximal symmetric, it
follows that $ \mul T=\mul T^*$ and, consequently, $\Si(\cd)\in
SF_0$. Hence the operator part $T_0'$ of the (unique) exit space
self-adjoint extension $T_0$ of $T$ satisfies statements of
Theorem \ref{th6.13} with $T_0'$ and $F(\cd)$ in place of $T^\tau$
and $F_\tau(\cd)$ respectively.

If in addition to the above assumptions $n_-(\Tmi)=\nu_-$ (i.e.,
both the deficiency indices $n_\pm(\Tmi)$ are minimal), then
$\wh\cH_b=\{0\}$ and $\wh\G_b=0$. This implies the corresponding
modification of the results of this subsection. In particular,
formula \eqref{6.43} takes the form $m(\l)=\cM(\l),\; \l\in\CR$.

\section{Differential operators of an odd order}
In this section we apply the above results to ordinary
differential operators of an odd order on an interval
$\cI=[a,b\rangle \; (-\infty<a <b\leq \infty)$ with the regular
endpoint $a$.

Assume that $H$ is a finite dimensional Hilbert space and
\begin {equation}\label{7.1}
l[y]= \sum_{k=0}^m  (-1)^k \left( \tfrac {i}{2}
[(q_{n-k}y^{(k)})^{(k+1)}+(q_{n-k} y^{(k+1)})^{(k)}] +
(p_{n-k}y^{(k)})^{(k)}\right)
\end{equation}
is a differential expression  of  an odd order $2m+1$ with
sufficiently smooth operator valued coefficients $p_k(\cd),\,
q_k(\cd):\cI\to [H]$ such that $p_k(t)=p_k^*(t), \;
q_k(t)=q_k^*(t)$ and $0\in\rho (q_0(t))$. Denote by $y^{[k]}(\cd),
\; k\in \{0,\; \dots,\; 2m+1\},$ the quasi-derivatives of $y\in
AC(\cI;H)$ and let $\dom l$ be the set of all functions $y\in
AC(\cI;H)$ for which $l[y]:=y^{[2m+1]}$ makes sense
\cite{Wei,KogRof75}.

As is known $H=H_t^+\oplus H_t^-$, where $H_t^+\; (H_t^-)$ is an
invariant subspace of the operator $q_0(t)$, on which $q_0(t)$ is
strictly positive (resp. negative). We put
\begin {equation*}
\nu_{0+}=\dim H_t^+, \qquad \nu_{0-}=\dim H_t^-
\end{equation*}
(this numbers does not depend on $t$); moreover, we assume for
definiteness that $\nu_{0-}\leq\nu_{0+}$.

By using formula (1.27) in \cite{KogRof75} one can easily show
that there exist finite dimensional Hilbert spaces $H'$ and $\wh
{\bold H}$ and an absolutely continuous operator function
\begin {equation*}
Q(t)=(Q_1(t),\, \wh Q(t), \, Q_2(t))^\top : H\to H' \oplus \wh
{\bold H} \oplus H', \quad t\in\cI,
\end{equation*}
such that $0\in\rho (Q(t))$ and the following holds:
\begin{gather}
i q_0(t)=-Q_1^*(t)Q_2(t)+Q_2^*(t)Q_1(t)+i\wh Q^*(t)\wh Q(t), \quad
t\in\cI\nonumber\\
\dim H'=\nu_{0-}, \qquad \dim\wh {\bold H}=\nu_{0+}-\nu_{0-}.
\label{7.1a}
\end{gather}

Introduce the finite dimensional Hilbert spaces (cf.
\eqref{3.0.1})
\begin {gather}
\bold H=\underbrace{H\oplus\dots\oplus H}_{m\;\;\rm{terms}}\oplus
H'
 (=H^m \oplus H') \label{7.2}\\
\bold H_0=\bold H\oplus \wh{\bold H}=H^m\oplus H'\oplus\wh{\bold
H},\quad \bH=\bold H_0\oplus\bold H=\bold H\oplus\wh {\bold
H}\oplus \bold H\nonumber
\end{gather}
Clearly, the space $\bH$ admits the representation
\begin {equation}\label{7.3}
\bH=\overbrace{\underbrace{H\oplus\dots\oplus H}_{m\;\;\rm{terms}}
\oplus H'}^{\bold H}\oplus \wh{\bold
H}\oplus\overbrace{\underbrace{H\oplus\dots\oplus
H}_{m\;\;\rm{terms}} \oplus H'}^{\bold H}
\end{equation}

For each function $y\in \dom l$ we let
\begin{gather}
\bold y_0(t)=\{y(t),\, \dots, \, y^{[m-1]}(t),\,
Q_1(t)y^{(m)}(t)\}(\in\bold
H)\label{7.3a}\\
\bold y_1(t)=\{y^{[2m]}(t),\, \dots, \, y^{[m+1]}(t),\,
Q_2(t)y^{(m)}(t)\}(\in\bold H)\label{7.3b}\\
\bold y(t)=\{\bold y_0(t), \, \wh Q(t)y^{(m)}(t), \, \bold y_1(t)
\}(\in \bold H\oplus\wh {\bold H}\oplus \bold H=\bH)\label{7.4}.
\end{gather}

Let $\cK$ be a finite dimensional Hilbert space. For an operator
valued solution $Y(\cd):\cI\to [\cK,H]$ of the equation
\begin {equation}\label{7.5}
l[y]=\l y\quad (\l\in\bC)
\end{equation}
we define the operator function $\bold Y(\cd ):\cI\to [\cK,\bH]$
by the following relations: if $h\in\cK$ and $Y (t)h=y(t)$, then
$\bold Y(t)h=\bold y(t)$.

Next assume that $\gH':=L^2(\cI)$ is the Hilbert space of all
Borel $H$-valued functions $f(\cd)$ on $\cI$ satisfying
$\int\limits_{\cI} ||f(t)||^2\, dt<\infty$. Denote also by $\cL^2
[\cK,H]$ the set of all operator functions $Y(\cd):\cI\to [\cK,H]$
such that $Y(t)h\in \gH', \;h\in\cK$. Moreover, by using
\eqref{7.3} we associate with a function $f(\cd)\in\gH'$ the
$\bH$-valued functions $\dot f(\cd)$ on $\cI$ given by $ \dot
f(t)=\{f(t),\, 0,\,\dots,\, 0\}, \; t\in\cI $.

According to \cite{Wei} expression \eqref{7.1} induces in $\gH'$
the maximal operator $\Lma$ and the minimal operator $\Lmi$.
Moreover, $\Lmi$ is a closed densely defined symmetric operator
and $\Lmi^*=\Lma$.

It turns out that the expression $l[y]$ is equivalent in fact to a
certain symmetric system. More precisely, the following
proposition is implied by the results of \cite{KogRof75}.
\begin{proposition}\label{pr7.1}
Let $l[y]$ be the expression \eqref{7.1} and let
\begin {equation*}
\bold J=\begin{pmatrix} 0 & 0& -I_{\bold H} \cr 0 & i I_{\wh{\bold
H}} & 0 \cr I_{\bold H} & 0 & 0
\end{pmatrix}\in [\bold H\oplus\wh{\bold H}\oplus\bold H], \qquad \bold\D(t)=
\begin{pmatrix}
\bold P & 0 & 0 \cr 0 & 0 & 0 \cr 0 & 0 & 0\end{pmatrix} \in
[\bold H\oplus\wh{\bold H}\oplus\bold H],
\end{equation*}
where $\bold P$ is the orthoprojector  in $\bold H$ onto the first
term in the right hand side of  \eqref{7.2}. Then there exists a
continuous  operator function $\bold B(t)=\bold B^*(t)(\in [\bH]),
\; t\in \cI,$ (defined in terms of $p_j$ and $q_j$) such that the
first-order symmetric system
\begin {equation}\label{7.6}
\bold J y'(t)-\bold B(t)y(t)=\bold\D(t) f(t), \quad t\in\cI
\end{equation}
and the corresponding homogeneous system
\begin {equation}\label{7.7}
\bold J y'(t)-\bold B(t)y(t)=\l \bold\D(t) y(t), \quad t\in\cI,
\;\;\l\in\bC
\end{equation}
possess the following properties:

{\rm (1)} There is a bijective correspondence $Y(\cd,
\l)\leftrightarrow \wt Y(\cd,\l)=\bold Y(\cd,\l)$ between all
$[\cK,H]$-valued operator solutions $Y(\cd, \l)$ of Eq.
\eqref{7.5} and all $[\cK,\bH]$-valued operator solutions $ \wt
Y(\cd,\l)$ of the system \eqref{7.7}. Moreover, $Y(\cd,
\l)\in\cL^2 [\cK,H]$ if and only if $\bold Y(\cd,\l)\in \cL_{\bold
\D}^2 [\cK,\bH]$.

{\rm (2)} Let $\tma$ be the maximal linear relation in
$\cL_{\bold\D}^2(\cI)$ induced by the system \eqref{7.6}. Then the
equality $(V_1 y)(t) = \bold y(t),\; y(\cd)\in\dom\Lma,$ defines a
bijective linear mapping $V_1$ from $\dom\Lma$ onto $\dom\tma$
satisfying $(V_1 y,V_1 z)_{\bold\D}=(y,z)_{\gH'}, \;
y,z\in\dom\Lma$.

{\rm (3)} Let $\Tmi$ and $\Tma$ be minimal and maximal relations
in $\gH:=L_{\bold\D}^2(\cI)$ induced by system \eqref{7.6}. Then
$\Tmi$ is a densely defined operator and the equality $V_2 f=\pi
(\dot f(\cd)), \; f=f(\cd)\in\gH',$ defines a unitary operator
$V_2\in [\gH',\gH]$ such that $(V_2\oplus V_2)\,( {\rm gr}\,\Lmi)
={\rm gr}\,\Tmi$ and $(V_2\oplus V_2)\, ({\rm gr}\,\Lma )={\rm
gr}\,\Tma$ (i.e., the operators $\Lmi$ and $\Tmi$ as well as
$\Lma$ and $\Tma$ are unitarily equivalent by means of $V_2$).
\end{proposition}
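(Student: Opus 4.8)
The plan is to derive all three assertions from the reduction of the odd-order expression to a first-order system carried out in \cite{KogRof75}, the only genuinely new content being the bookkeeping that rephrases that reduction in terms of the relations $\tma,\Tma,\Tmi$ and the spaces $\bold H,\bold H_0,\bH$ from \eqref{7.2}--\eqref{7.4}. First I would fix the coefficient $\bold B(t)$. Writing out the quasi-derivatives $y^{[k]}$ and differentiating the components of $\bold y(t)$ in \eqref{7.3a}--\eqref{7.4}, the recursion defining $y^{[k+1]}$ from $y^{[k]}$ together with the factorization \eqref{7.1a} of $iq_0(t)$ produces, after collecting terms, an identity of the shape $\bold J\bold y'(t)-\bold B(t)\bold y(t)=\bold\D(t)\dot f(t)$ whenever $l[y]=f$. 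This identity simultaneously \emph{defines} the continuous selfadjoint coefficient $\bold B(t)$ (its entries being explicit in $p_j,q_j,Q_1,Q_2,\wh Q$) and, on specializing to $f=\l y$, shows that $\bold y$ solves \eqref{7.7} exactly when $l[y]=\l y$. Note that the middle component $\wh Q(t)y^{(m)}(t)$ is precisely what is matched by the block $iI_{\wh{\bold H}}$ of $\bold J$; this is the feature separating the odd-order (non-Hamiltonian) situation from the even-order one.

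For part (1) the forward direction is the computation just described. For the converse I would invoke $0\in\rho(Q(t))$: given a $[\cK,\bH]$-valued solution $\wt Y$ of \eqref{7.7}, the three blocks $Q_1y^{(m)},\wh Qy^{(m)},Q_2y^{(m)}$ together determine $Q(t)y^{(m)}(t)$, hence $y^{(m)}(t)$, so the first component of $\wt Y$ recovers a function $Y(\cd,\l)$ with $\bold Y=\wt Y$, and reversing the differentiation identity gives $l[Y]=\l Y$. The $\cL^2$-equivalence is immediate from the pointwise identity
\begin{equation*}
(\bold\D(t)\bold y(t),\bold y(t))_\bH=\|\bold P\,\bold y_0(t)\|^2=\|y(t)\|^2,
\end{equation*}
which yields $\|\bold Y\|_{\bold\D}=\|Y\|_{L^2(\cI)}$ and therefore $Y\in\cL^2[\cK,H]\Llr\bold Y\in\cL_{\bold\D}^2[\cK,\bH]$.

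Part (2) extends the correspondence from solutions to the whole maximal domain: $y\mapsto\bold y$ is defined for every $y\in\dom l$, and $\{y,f\}\in{\rm gr}\,\Lma$ (that is, $l[y]=f$ in $\gH'$) is carried by the same differentiation identity to $\{\bold y,\dot f\}\in\tma$, so $V_1y=\bold y$ maps $\dom\Lma$ into $\dom\tma$; injectivity is clear, and surjectivity again uses invertibility of $Q(t)$ to reconstruct $y$ from $\bold y$. The isometry property $(V_1y,V_1z)_{\bold\D}=(y,z)_{\gH'}$ is the integrated form of the displayed identity. For part (3) I would first observe that $\bold\D\bold y(t)=\dot y(t)$, so in the quotient $\gH=L_{\bold\D}^2(\cI)$ one has $\pi(\bold y)=\pi(\dot y)$; consequently $V_2f=\pi(\dot f)$ is well defined and isometric, and it is onto because every class in $L_{\bold\D}^2(\cI)$ is represented by its first component. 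Thus $V_2\in[\gH',\gH]$ is unitary and $\pi\circ V_1=V_2$ on $\dom\Lma$. Combining this with part (2) and the definitions of $\Lmi,\Lma$ (via $l[y]$) and $\Tmi,\Tma$ (via \eqref{7.6}) gives $(V_2\oplus V_2)({\rm gr}\,\Lma)={\rm gr}\,\Tma$ and $(V_2\oplus V_2)({\rm gr}\,\Lmi)={\rm gr}\,\Tmi$; since $\Lmi$ is a densely defined operator by \cite{Wei}, so is $\Tmi$.

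The main obstacle is the explicit verification underlying part (1): that differentiating the quasi-derivative vector $\bold y(t)$ reproduces \emph{exactly} the system \eqref{7.7} with the prescribed $\bold J$ and $\bold\D$, and in particular that \eqref{7.1a} is precisely what is needed to close the algebra in the middle $\wh{\bold H}$-block. This computation is the technical heart and is supplied by \cite[formula (1.27)]{KogRof75}; the remaining steps are essentially a transcription of that reduction into the relation-theoretic framework used here.
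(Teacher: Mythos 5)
Your proposal is correct and follows exactly the route the paper intends: the paper itself gives no proof of Proposition \ref{pr7.1} beyond the remark that it ``is implied by the results of \cite{KogRof75}'', and your sketch is precisely that reduction (the identity defining $\bold B(t)$ via the quasi-derivative recursion and \eqref{7.1a}, invertibility of $Q(t)$ for the converse, the pointwise identity $(\bold\D(t)\bold y(t),\bold y(t))=\|y(t)\|^2$ for the $\cL^2$-equivalence and the unitarity of $V_2$) transcribed into the relation-theoretic setting. Your handling of the minimal relations via $\Lmi=\Lma^*$, $\Tmi=\Tma^*$ and the density of $\dom\Lmi$ from \cite{Wei} is likewise the natural completion of that argument.
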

By using Proposition \ref{pr7.1} one can easily translate all the
results of \cite{Mog13.1} and the present paper to the expression
\eqref{7.1}. Below we specify only the basic points in this
direction.

Let $\wt U\in [\bH]$ be a $\bold J$-unitary operator given by
\eqref{3.17.5} with $\bold H$ and $\wh {\bold H}$ in place of $H$
and $\wh H$ respectively. Using \eqref{7.3a}--\eqref{7.4}
introduce the linear mappings $\G_{ja}:\dom l\to \bold H, \;
j\in\{0,1\},$ and $\wh \G_a: \dom l\to \wh{\bold H}$ by setting
\begin{gather}
\G_{0a}y = u_7 \bold y_0(a)+ u_8 \wh Q (a) y^{(m)}(a)+ u_9 \bold
y_1(a), \quad
y\in\dom l \label{7.8}\\
\begin{array}{c}\label{7.9}
\wh\G_ay = u_1 \bold y_0(a)+ u_2 \wh Q (a) y^{(m)}(a)+ u_3 \bold y_1(a),\\
\G_{1a}y = u_4 \bold y_0(a)+ u_5 \wh Q (a) y^{(m)}(a)+ u_6 \bold
y_1(a).
\end{array}
\end{gather}
Next, assume that $\nu_{b+}$ and $\nu_{b-}$ are indices of inertia
of the bilinear form
\begin{equation*}
[y,z]_b:=\lim_{t\uparrow b} (\bold J \bold y(t), \bold z(t)),
\quad y,z\in\dom\Lma.
\end{equation*}
Combining \eqref{3.27} with \eqref{7.2} and \eqref{7.1a} and
taking Proposition \ref{pr7.1}, (3) into account one gets the
following equality for deficiency indices $d_{\pm}=n_\pm(\Lmi)$ of
the operator $\Lmi$:
\begin {equation}\label{7.10}
d_+=m\cdot \dim H+\nu_{0-}+\nu_{b+}, \qquad d_-=m\cdot \dim
H+\nu_{0+}+\nu_{b-}.
\end{equation}
Therefore $d_+ < d_-$ if and only if one of the following two
alternative cases holds:

 \underline {\emph{ Case 1.}} $\;\;\nu_{0+}- \nu_{0-}
>\nu_{b+}-\nu_{b-}>0$.

 \underline {\emph{ Case 2.}} $\;\;\nu_{0+}- \nu_{0-}
\geq 0 \geq \nu_{b+}-\nu_{b-}$ and $\nu_{0+}- \nu_{0-}\neq
\nu_{b+}-\nu_{b-}(\neq 0)$

Proposition \ref{pr7.1}, (2) enables one to identify $\dom \Lma$
and $\dom\tma$. Therefore in the case $d_-\leq d_+$ we may assume
that the linear mapping $\G_b$ in \cite[Lemma 3.4]{Mog13.1} is of
the form
\begin {equation}\label{7.10.1}
\G_b=(\G_{0b},\, \wh\G_b, \, \G_{1b})^\top: \dom\Lma\to
\cH_{0b}\oplus \wh{\bold H}\oplus\cH_{1b},
\end{equation}
where $\cH_{0b}$ is a finite dimensional Hilbert space and
$\cH_{1b}$ is a subspace in $ \cH_{0b}$. Similarly in the case
$d_+ < d_-$ we define  on $\dom\Lma$ the linear mappings
$\G_{0b},\G_{1b}, \wh\G_b$ (\Ca{1}) and $\wt\G_{0b},\G_{1b}$
(\Ca{2}) in the same way as in Subsection \ref{sub3.3}.

In the following we suppose that: (i) $U$ is the operator
\eqref{3.17.1} (with $\bold H$ and $\wh{\bold H}$ in place of $H$
and $\wh H$) satisfying \eqref{3.17.2}--\eqref{3.17.4}; (ii)
$\wh\G_a$ and $\G_{1a}$ are the mappings \eqref{7.9}. Then the
same boundary conditions as in the right hand sides of
\eqref{3.39}, \eqref{3.43} and \cite[(3.40)]{Mog13.1} give a
symmetric operator $T(\supset \Lmi)$ in $\gH'$. Moreover, we
define a boundary parameter $\tau$ in the same way as for
symmetric systems in \cite[Definition 5.1]{Mog13.1} (the case
$d_-\leq d_+$) and Definitions \ref{def4.1} and \ref{def4.7} (the
case $d_+ < d_-$). A boundary parameter $\tau$ gives a
parametrization of all generalized resolvents $R_\tau(\cd)$ and,
consequently, all spectral functions $F_\tau(\cd)$ of $T$. Such a
parametrization is generated by means of a boundary value problem
involving the equation
\begin {equation}\label{7.10.2}
l[y]-\l y=f(t), \quad t\in \cI
\end{equation}
and the boundary conditions   \cite[(4.2) and (4.3)]{Mog13.1}
($d_-\leq d_+$), \eqref{4.3}--\eqref{4.5} ($d_+ <d_-$, \Ca{1}) or
\eqref{4.60}--\eqref{4.61.2} ($d_+ <d_-, \Ca{2}$). In the
following "the boundary value problem for $l[y]$" means one of the
listed above boundary value problems for the expression $l[y]$.

If $d_+ =d_-$, then in \eqref{7.10.1} $\cH_{0b}=\cH_{1b}=:\cH_b$
and $R_\tau(\cd)$ is a canonical resolvent of $T$ if and only if
$\tau=\{(C_0,C_1);\cH_b\}$ is a self-adjoint operator pair (this
means that $C_0,C_1\in [\cH_b], \; \im C_1 C_0^*=0$ and $0\in\rho
(C_0\pm i C_1)$). In this case the corresponding boundary problem
involves equation \eqref{7.10.2} and self-adjoint boundary
conditions
\begin {equation}\label{7.10.3}
\G_{1a}y=0, \quad \wh\G_a y=\wh\G_b y, \quad  C_0\G_{0b}y+C_1
\G_{1b}y=0,
\end{equation}
where $\G_{0b}, \; \G_{1b}$ and $\wh\G_b$ are taken from
\eqref{7.10.1}. Moreover, $R_\tau(\l)=(T^\tau - \l)^{-1}$, where
$T^\tau=\Lma\up \dom T^\tau$ is a self-adjoint extension of $\Lmi$
with the domain
\begin {equation}\label{7.10.4}
\dom T^\tau=\{y\in\dom\Lma:\G_{1a}y=0, \;  \wh\G_a y=\wh\G_b y, \;
C_0\G_{0b}y+C_1 \G_{1b}y=0\} .
\end{equation}

Next assume that $\wt U\in [\bold H\oplus\wh{\bold H}\oplus\bold
H]$ is a $\bold J$-unitary extension \eqref{3.17.5} of $U$ and
$\G_{0a}$ is the mapping \eqref{7.8}.
\begin{definition}\label{def7.2}
The $m$-function of the expression $l[y]$ corresponding to the
boundary parameter $\tau$ is the $m$-function $m_\tau(\cd)$ of the
equivalent system \eqref{7.6}.
\end{definition}
Definition \ref{def7.2} means that $m_\tau(\cd):\CR\to [\bold
H_0]$ is a unique operator function such that for any  $\l\in\CR$
the operator solution $v_\tau(\cd,\l)$ of Eq. \eqref{7.5} given by
\begin {equation}\label{7.11}
v_\tau(t,\l):=\f_U (t,\l)m_\tau(\l)+\psi (t,\l)
\end{equation}
belongs to $\cL^2[\bold H_0,H]$ and satisfies the  boundary
conditions \cite[(4.41)--(4.43)]{Mog13.1} ($d_-\leq d_+$),
\eqref{4.42}--\eqref{4.44} ($d_+< d_-$, \Ca{1}) or \eqref{4.94}
and \eqref{4.95} ($d_+< d_-$, \Ca{2}). In \eqref{7.11}
$\f_U(\cd,\l)(\in [\bold H_0,H])$ and $\psi (\cd,\l)(\in [\bold
H_0,H])$ are the operator solutions of Eq. \eqref{7.5} with  the
initial data
\begin {equation*}
\wt U \pmb \f_U(a,\l)=\begin{pmatrix} I_{\bold H_0}\cr 0
\end{pmatrix}:\bold H_0\to \bold H_0 \oplus \bold H, \qquad \wt U
\pmb \psi(a,\l)=\begin{pmatrix} -\tfrac i 2 P_{\wh{\bold H}} \cr
-P_{\bold H} \end{pmatrix}:\bold H_0\to \bold H_0 \oplus \bold H
\end{equation*}
(note that $\f_U(\cd,\l)$ does not depend on a choice of the
extension $\wt U\supset U $).

Let $\gH_b'$ be the set of all functions $f(\cd)\in\gH'$ such that
$f(t)=0$ on some interval $(\b, b)$ (depending on $f$). For a
function $f(\cd)\in\gH'_b$ the Fourier transform $\wh
f(\cd):\bR\to \bold H_0$ is
\begin {equation} \label{7.13}
\wh f(s)=\int_\cI \f_U^*(t,s) f(t)\, dt.
\end{equation}
\begin{definition}\label{def7.3}
Let $\tau$ be a boundary parameter. A distribution function
$\St:\bR \to [\bold H_0 ] $ is called a spectral function of the
boundary value problem for $l[y]$ if  for each $f(\cd)\in\gH_b'$
the Fourier transform \eqref{7.13} satisfies \eqref{6.19} (with
$\gH'$ in place of $\gH$).
\end{definition}
Since $\Lmi$ is densely defined, formula \eqref{6.19} yields the
Parseval equality $||\wh f||_{\LSB}=||f||_{\gH'}$. Therefore for
each $f\in\gH'$ there exists the fourier transform \eqref{7.13}
(the integral converges in the norm of $\LSB$) and the equality
$Vf=\wh f,\; f\in\gH',$ define an isometry $V:\gH'\to \LSB$.

In view of Proposition \ref{pr7.1} each spectral function $\St$ of
the expression $l[y]$ is a spectral function of the equivalent
symmetric system \eqref{7.6} and vice versa. Hence by
\cite[Theorem 6.5]{Mog13.1} and Theorem \ref{th6.5} for each
boundary parameter $\tau$ there exits a unique spectral function
$\St$ of the boundary value problem for $l[y]$ and this function
is defined by the Stieltjes inversion formula \eqref{6.24}.
Moreover, for each $f\in\gH'$ the inverse Fourier transform is
\begin {equation*}
 f(t)=\int_\bR \f_U(t,s)\,d\Si_\tau(s) \wh f(s).
\end{equation*}

For a given boundary parameter $\tau$ denote by $T^\tau$ the
($\gH$-minimal) exit space self-adjoint extension of $T$
generating $R_\tau(\l)$. Since $T$ is densely defined, $T^\tau$ is
a self-adjoint operator in the Hilbert space $\wt\gH\supset \gH'$.

The following theorem is implied by Theorem \ref{th6.11} and
\cite[Theorem 6.11]{Mog13.1}.
\begin {theorem}\label{th7.4}
Let $\tau$ be a boundary parameter and let $\St$ and  $V$ be the
corresponding spectral function and  Fourier transform
respectively. Then there exists a unitary extension $\wt V\in
[\wt\gH,\LSB]$ of $V$ such that $T^\tau$ and the multiplication
operator $\L$ in $\LSB$ are unitarily equivalent by means of $\wt
V$. Moreover, the following statements are equivalent:

{\rm (1)} $d_+=d_-$ and $ \tau =\{(C_0,C_1);\cH_b\}$ is a
self-adjoint operator pair, so that $T^\tau$ is the canonical
self-adjoint extension \eqref{7.10.4} of $T$;

{\rm (2)} $V\gH' =\LSB$, that is the fourier transform $V$ is a
unitary operator.

If the statement {\rm (1)} (and hence {\rm (2)}) is valid, then
the operators $T^\tau$ and $\L$ are unitarily equivalent by means
of $V$.
\end{theorem}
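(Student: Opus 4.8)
The plan is to reduce the statement to the already-developed theory for the equivalent symmetric system \eqref{7.6} and then exploit the density of $\Lmi$. By Proposition \ref{pr7.1}, (3) the unitary operator $V_2\in[\gH',\gH]$ intertwines $\Lmi$ with $\Tmi$ and $\Lma$ with $\Tma$; since the boundary mappings $\G_{0a},\G_{1a},\wh\G_a,\G_b$ (and $\wt\G_{0b}$) for $l[y]$ are defined precisely so as to correspond under $V_2$ to their system counterparts, the operator $T$ and its $\gH$-minimal exit space extension $T^\tau$ for $l[y]$ are carried by $V_2$ onto the relation $T$ and the exit space extension $\wt T^\tau$ of the system. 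Moreover the Fourier transform \eqref{7.13} for $l[y]$ is the Fourier transform \eqref{6.18} of the system precomposed with $V_2$. Hence it suffices to establish the conclusions for the system and transport them back through $V_2$.

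First I would note that $\Lmi$ is densely defined (Proposition \ref{pr7.1}, (3)), so $T(\supset\Lmi)$ is a densely defined operator with $\mul T=\mul T^*=\{0\}$; this is exactly the hypothesis of Theorem \ref{th6.11} (in the case $d_+<d_-$) and of \cite[Theorem 6.11]{Mog13.1} (in the case $d_-\le d_+$). Applying the relevant theorem to the system and pulling back by $V_2$ yields at once that $T^\tau$ is an operator, that $V$ is an isometry, and that there is a unitary extension $\wt V\in[\wt\gH,\LSB]$ of $V$ under which $T^\tau$ and the multiplication operator $\L$ are unitarily equivalent. This gives the first assertion of the theorem for \emph{every} boundary parameter $\tau$.

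Next I would treat the equivalence $(1)\Leftrightarrow(2)$. Since $V$ is always an isometry, it is unitary if and only if it is surjective, i.e. $V\gH'=\LSB$; by the unitary equivalence furnished by $\wt V$ together with $\gH$-minimality of the exit space extension, this holds if and only if $\wt\gH=\gH'$, equivalently if and only if $R_\tau(\cd)$ is a canonical resolvent $(T^\tau-\l)^{-1}$ of $T$ with $T^\tau$ self-adjoint in $\gH'$. As already recorded around \eqref{7.10.1}--\eqref{7.10.4}, the symmetric operator $T$ possesses such a canonical self-adjoint extension exactly when $d_+=d_-$ (so that $\cH_{0b}=\cH_{1b}=:\cH_b$) and the parameter is a self-adjoint operator pair $\tau=\{(C_0,C_1);\cH_b\}$, in which case $T^\tau$ is given by \eqref{7.10.4}; this proves $(2)\Rightarrow(1)$. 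The converse is immediate, since if $(1)$ holds then $R_\tau$ is canonical, $\wt\gH=\gH'$ and $\wt V=V$ is already unitary, which simultaneously yields the final assertion that $T^\tau$ and $\L$ are unitarily equivalent by means of $V$ itself.

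The hard part will be making precise the chain ``$V\gH'=\LSB\iff\wt\gH=\gH'\iff R_\tau$ canonical'' and the ensuing identification of canonical resolvents with the pair of conditions $d_+=d_-$ and $\tau$ self-adjoint: this rests on the classification of canonical versus genuinely exit-space resolvents (carried out for $d_-\le d_+$ in \cite[Theorem 6.11]{Mog13.1}) and must be checked to survive the passage through $V_2$ and the block structure \eqref{7.10.1}. Everything else is a routine transcription of Theorem \ref{th6.11} via Proposition \ref{pr7.1} and the equivalence \eqref{6.29}.
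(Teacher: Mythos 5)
Your proposal is correct and follows essentially the same route as the paper, which derives Theorem \ref{th7.4} directly from Theorem \ref{th6.11} (case $d_+<d_-$) and \cite[Theorem 6.11]{Mog13.1} (case $d_-\le d_+$) via the equivalence of Proposition \ref{pr7.1} and the density of $\dom\Lmi$, so that $\mul T=\mul T^*=\{0\}$ and the conclusions hold for every boundary parameter $\tau$. Your extra care with the chain $V\gH'=\LSB\iff\wt\gH=\gH'\iff R_\tau$ canonical (forcing $d_+=d_-$ and $\tau$ a self-adjoint pair) is a sound filling-in of details the paper leaves implicit in its one-line citation.
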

Theorem \ref{th7.4} immediately implies that the spectral
multiplicity of $T^\tau$ does not exceed $m\cd\dim
H+\nu_{0+}(=\dim\bold H_0)$.

Similarly one can translate to the expression $l[y]$ Theorem
\ref{th6.13} and parametrization of all spectral functions $\St$
by means of \eqref{6.31}, \cite[(5.22)]{Mog13.2} and the Stieltjes
inversion formula.

It follows from \eqref{7.10} that for a given expression
\eqref{7.1} minimally possible deficiency indices of the operator
$\Lmi$ are
\begin {equation*}
n_+(\Lmi)=m\cd\dim H+\nu_{0-}, \qquad n_-(\Lmi)=m\cd\dim
H+\nu_{0+}.
\end{equation*}
The routine reformulation of the results of Subsection
\ref{sub5.4} to the case of the expression $l[y]$  with minimal
deficiency index $n_+(\Lmi)$ of the operator $\Lmi$ is left to the
reader.

\begin{remark}\label{rem7.5}
Let $H=\bC$, so that $l[y]$ is a scalar  expression with real
valued coefficients  $p_k(\cd)$ and $q_k(\cd)$. Then $q_0(t)>0, \;
\wh Q(t)=q_0^{\frac 1 2} (t), \; \wh{\bold H}=\bC$ and the
equalities \eqref{7.2}--\eqref{7.4} take the form
\begin{gather*}
\bold H=\bC^m, \qquad \bold H_0=\bC^m\oplus\bC=\bC^{m+1}, \qquad
\bH=\bC^{m+1}\oplus \bC^m=\bC^{2m+1}\\
\bold y_0(t)=\{y(t),\, \dots, \, y^{[m-1]}(t)\}(\in\bC^m),\quad
\bold
y_1(t)=\{y^{[2m]}(t),\, \dots, \, y^{[m+1]}(t)\}(\in\bC^m),\\
\bold y(t)=\{\bold y_0(t), \,q_0^{\frac 1 2} (t)y^{(m)}(t), \,
\bold y_1(t) \}(\in \bC^{2m+1}).
\end{gather*}
In this case $\f_U(\cd,\l)$ is the $(m+1)$-component operator
solution
\begin {equation*}
\f_U(t,\l)=(\f_1(t,\l), \, \f_2(t,\l),\,\dots, \,\f_m(t,\l),\,
\f_{m+1}(t,\l)):\bC^{m+1}\to\bC
\end{equation*}
of Eq. \eqref{7.5} with the initial data
\begin {equation*}
\wt U\pmb \f_U(a,\l)=\begin{pmatrix} I_{m+1}\cr 0  \end{pmatrix}:
\bC^{m+1}\to \bC^{m+1} \oplus \bC^m
\end{equation*}
and \eqref{7.13} defines the Fourier transform $\wh f(\cd):\bR\to
\bC^{m+1}$. Moreover, in a fixed basis of $\bC^{m+1}$ the
$m$-function $m_\tau(\cd)$ and spectral function $\St$ can be
represented as $(m+1)\times (m+1)$-matrix valued functions
$m_\tau(\l)=(m_{ij}(\l))_{i,j=1}^{m+1}$ and
$\Sigma_\tau(\l)=(\Sigma_{ij}(\l))_{i,j=1}^{m+1}$ respectively.

In conclusion note that for a scalar expression \eqref{7.1} one
has $\nu_{0+}=1$  and $\nu_{0-}=0$. Therefore for a scalar
expression $l[y]$ with $n_+(\Lmi)< n_-(\Lmi)$ \Ca{1} is
impossible. In other words, for such an expression either
$n_-(\Lmi)\leq n_+(\Lmi)$ or $n_+(\Lmi)< n_-(\Lmi)$ and \Ca{2}
holds.
\end{remark}

\end{document}